\newcommand{\B}{\mathcal{B}}
\newcommand{\C}{\mathbf{C}}
\newcommand{\D}{\mathcal{D}}
\newcommand{\EE}{\mathbb{E}}
\newcommand{\F}{\mathcal{F}}
\renewcommand{\H}{\mathbf{H}}
\renewcommand{\L}{\mathbf{L}}
\newcommand{\N}{\mathbb{N}}
\newcommand{\PP}{\mathbb{P}}
\newcommand{\R}{\mathbb{R}}
\newcommand{\W}{\mathbf{W}}
\DeclareMathOperator{\e}{e}
\DeclareMathOperator{\grad}{\nabla}
\newcommand{\zlam}{\zeta_{\lambda}}
\newcommand{\tlam}{\theta_\lambda}
\newcommand{\BS}{\text{BS}}
\renewcommand{\epsilon}{\varepsilon}
\newcommand{\eps}{\varepsilon}
\newcommand{\derp}[2]{\frac{\partial #1}{\partial #2}}
\newcommand{\set}[1]{\left\{#1\right\}}
\newcommand{\pa}[1]{\big(#1\big)}
\newcommand{\bra}[1]{\left[#1\right]}
\newcommand{\norm}[2]{\left\|#1\right\|_{#2}}
\newcommand{\brak}[1]{\left\langle#1\right\rangle}
\newcommand{\expt}[2][]{\mathbb{E}_{#1}\left[#2\right]}
\newcommand{\weakstarto}{\overset{\star}{\rightharpoonup}}
\theoremstyle{plain}
\newtheorem{theorem}{Theorem}[section]
\newtheorem{corollary}[theorem]{Corollary}
\newtheorem{lemma}[theorem]{Lemma}
\newtheorem{proposition}[theorem]{Proposition}
\theoremstyle{definition}
\newtheorem{definition}[theorem]{Definition}
\theoremstyle{remark}
\newtheorem{remark}[theorem]{Remark}
\numberwithin{equation}{section}
\newenvironment{acknowledgements}{%
	\begin{abstract}
	}{%
	\end{abstract}
}
\newcommand{\op}{\grad^\bot(A+L)^{-1}}
\title[Invariant Measure for Stochastic MLQG Equations]{Existence of Invariant Measures for Stochastic Inviscid Multi-Layer Quasi-Geostrophic Equations}
\author[F. Butori]{Federico Butori}\address{Scuola Normale Superiore, Piazza dei Cavalieri 7, 56126, Pisa, Italy}
\email{\href{mailto:federico.butori at sns.it}{federico.butori at sns.it}}
\author[F. Grotto]{Francesco Grotto}
\address{Università di Pisa, Dipartimento di Matematica, 5 Largo Bruno Pontecorvo, 56127 Pisa, Italia}
\email{\href{mailto:francesco.grotto at unipi.it}{francesco.grotto at unipi.it}}
\author[E. Luongo]{Eliseo Luongo}\address{Scuola Normale Superiore, Piazza dei Cavalieri 7, 56126, Pisa, Italy}
\email{\href{mailto:eliseo.luongo at sns.it}{eliseo.luongo at sns.it}}
\author[L. Roveri]{Leonardo Roveri}
\address{Università di Pisa, Dipartimento di Matematica, 5 Largo Bruno Pontecorvo, 56127 Pisa, Italia}
\email{\href{mailto:leonardo.roveri at phd.unipi.it}{leonardo.roveri at phd.unipi.it}}
\subjclass{35R60, 37N10, 60H15, 76U60, 86A10}
\date\today
\begin{document}
	
\maketitle
	
\begin{abstract}
	We consider an inviscid 3-layer quasi-geostrophic model with stochastic forcing in a 2D bounded domain. After establishing well-posedness of such system under natural regularity assumptions on the initial condition and the (additive) noise, we prove the existence of an invariant measure supported on bounded functions by means of the Krylov-Bogoliubov approach developed by Ferrario and Bessaih (Comm. Math. Phys. 377, 2020).
	\end{abstract}
	
	
\section{Introduction}\label{sec:introduction}
	
Multi-layer quasi-geostrophic equations are a standard model in the theoretical study of atmospheric and oceanic phenomena.
The equations are derived from general Navier-Stokes (shallow water) models \cite[Section 5.3]{Vallis2017}, and they describe the dynamics of stratified fluids characterized by an approximate balance between the pressure gradient and the Coriolis force.
	
Our study is focused on an inviscid and stochastically driven 3-layer quasi-geostrophic system:
\begin{gather}\label{eq:3lqg}
	\frac{\partial}{\partial t}q^i+(u^i\cdot \nabla)q^i=f^i, \quad
	u^i= \grad^\perp \psi^i, \quad   i=1,2,3,
\end{gather}
in which dynamics is expressed in terms of potential vorticity $q^i$ of the $i$-th layer and the associated stream function $\psi^i$, with $f^i$ being a random forcing term.
The interaction between different layers is encoded in the linear equations linking potential vorticity and stream functions,
\begin{equation}\label{eq:3lqgvallis}
\begin{cases}
    q^1=\Delta \psi^1+\beta y
	+\frac{c^2}{H_1 g_1}(\psi^2-\psi^1)+\frac{c^2}{g H_2} \psi^1,\\
	q^2=\Delta \psi2+\beta y
	+\frac{c^2}{H_2 g_1}(\psi^1-\psi^2)
	+\frac{c^2}{H_2 g_2}(\psi^3-\psi^2),\\
	q^3=\Delta \psi^3+\beta y
	+\frac{c^2}{H_3 g_2}(\psi^2-\psi^3)
	+\frac{c}{H_3} \eta,
\end{cases}
\end{equation}
the space domain is a bounded region $D\subset \R^2$ and the latter elliptic equations for potential vorticity are solved under Dirichlet boundary conditions.
In \eqref{eq:3lqgvallis}, parameters $H_i>0$ describe the basic-state thickness of single layers, $g_i=g\left(\rho_{i+1}-\right.$ $\left.\rho_i\right) / \rho_1$ the gravitational acceleration due to density differences between layers, 
$\eta$ is the height of the bottom topography, and finally $c$ is a variable Coriolis parameter.
There is no variation in the fluid densities $\rho_i$ (Boussinesq approximation).
Moreover, viscosity is absent and the fluid motion includes stochastic forcing modelling the interaction with small scales, coherently with the setup for the observation of geostrophic turbulence (cf. \cite[Chapter  9]{Vallis2017}).
The parameter $\beta>0$ rules the $\beta$-plane approximation of Coriolis force.
All the forthcoming arguments are easily generalized to an arbitrary number of interacting layers, and we restrict to the case of three layers for the sake of a clear exposition.
	
From a mathematical standpoint, when the $f^i$'s are deterministic or null the well-posedness theory of the PDE system described above is completely analogous to that of 2D Euler's equations, and there exists a unique solution for potential vorticity of class $L^\infty$. The proof follows by the classical argument of Yudovich \cite{YUDOVICH19631407}, and we refer to \cite{Chen2019} for details.
In order to preserve well-posedness, stochastic forcing terms must satisfy 
suitable regularity assumptions. Moreover, in order to observe stationary states of the system, friction terms must be included in the dynamics in order to compensate the input energy (see \eqref{eq:qgsto} below for a proper formulation of the stochastic PDE problem we consider). Both regularity of noise and the presence of friction are physically sensible (cf. \cite[p. 212]{Pedlosky2013}) and allow to consider long-time evolution of the system towards equilibrium states.
	
The aim of the present contribution is to rigorously establish an existence result for invariant measures, i.e. equilibrium states, supported by the well-posedness class $L^\infty$ of potential vorticity. Invariant measures and convergence to equilibrium in quasi-geostrophic and related models have been the object of numerous works in the viscous case: we mention for instance the recent works \cite{CarigiLuongo22,Carigi2023} and refer to \cite{Carigi2021} for an overview.
Equilibrium states for geophysical models in low regularity regimes were considered in \cite{GrottoPappalettera2021,GrottoPappalettera2022} (mirroring analogous techniques from the theory of Euler equations \cite{Albeverio1990,Flandoli2020,Grotto2020a,Grotto2020b,Grotto2022}), but to the best of our knowledge our result is the first to study invariant measures in inviscid geophysical model within the well-posedness regularity regime. Let us stress that in this context we can not rely on parabolic regularity theory, and instead need to resort to the averaging effect of stochastic forcing. 
	
We will establish well-posedness of a stochastic 3-layer quasi-geostrophic system by means of an inviscid limit approach, using non-physical viscous approximations that prevent vorticity creation at the boundary, thus avoiding a critical issue of viscous models in bounded domains (cf. \cite{Cottet88,GrottoLuongoMaurelli2022}).
Concerning invariant measures, our arguments will be based on the infinite-dimensional Krylov-Bogoliubov approach developed in \cite{Bessaih2020} for 2D Euler equations on a periodic domain, allowing to obtain existence of invariant measures supported by function spaces suited for the study of turbulent phenomena. In generalizing the techniques of \cite{Bessaih2020} to quasi-geostrophic systems, we are also able to improve their results in terms of Sobolev regularity both for well-posedness of the stochastic PDE system and for the support of invariant measures.
	
The paper is organized as follows. \Cref{sec:preliminaries} recalls basic notions to be used in the remainder of the paper, and with which we can rigorously present the main results of the paper and outline their proofs in \Cref{sec:overview}.
\Cref{sec:wellposedness,sec:stability} establish well-posedness of the stochastic PDE under consideration and properties of solutions, proving \cref{thm:main}
Finally, \Cref{sec:invariantmeasures} proves the main theorem of the paper, \cref{thm:invmeasure}.
	

 \section{Functional Analytic Setup}\label{sec:preliminaries}
	
Assume from now on that $D\subset \R^2$ is a simply connected, smooth bounded domain.
We denote $\D=C^\infty_c(D)$ and by $\D'=C^\infty_c(D)'$ its topological dual, i.e. the space of distributions. Brackets $\brak{\cdot,\cdot}$ will denote duality couplings in $\D\times \D'$, to be understood as $L^2$ products whenever both arguments belong to the latter.
	
We will also denote by $\brak{\cdot,\cdot}$ the $L^2$ scalar product for time-space functions, i.e. elements of spaces $L^p(0,T;L^q(D))$, whose norm will be denoted by $\norm{\cdot}{L^p_tL^q_x}$.
	

\subsection{Functional spaces}

We will rely on fractional Sobolev spaces and fractional powers of the Dirichlet Laplacian, which have various possible definitions, often non-equivalent.
We choose the approach base on the \emph{spectral} fractional Laplace operator. 
We refer to \cite{Caffarelli2016},\cite[Chapter 1]{LioMag}, \cite[Chapter 3]{Triebel83} for details on the material covered in this paragraph.
	
By $W^{s,p}(D)$, $s\in \N,\ p\in [1,+\infty]$, we denote Sobolev spaces obtained as the closure of $C^s(\bar{D})$ under the usual $L^p$-based Sobolev norm, and we set
\begin{equation*}
	H^s(D)=W^{s,2}(D),\quad W^{0,p}(D)=L^p(D).
\end{equation*}
Fractional Sobolev spaces $H^s(D)$, $0<s<1$, are the completion of $C^{1}(\bar{D})$) under the Sobolev-Slobodeckij norm
\begin{align*}
	\left( \lVert u\rVert^2_{L^2(D)}+\int_D \int_D \frac{(u(x)-u(y))^2}{|x-y|^{2+2s}}dxdy \right)^{1/2};
\end{align*}
the definition is extended to any $s = m+\sigma$, $m\in\N$, $\sigma \in (0,1)$, defining the norm
\begin{align*}
	\lVert u\rVert_{H^{s}(D)}^2=\lVert u\rVert_{H^m(D)}^2+\sum_{|\alpha| = m} \lVert D^\alpha u\rVert_{H^s(D)}^2.
\end{align*} 
Considering the closure of $\D$ instead of $C^{s}(\bar{D})$, we can define as above the spaces $W_0^{s,p}(D)$ and $H_0^s(D)$ (again, we set $W_0^{0,p}(D) = L^p(D)$). 
In particular, there exist an explicit relationship between these different spaces: for $0\leq s\leq 1/2$ it holds $H^s_0(D)=H^s(D)$, while for $s>1/2$ one has $H^s_0(D)\hookrightarrow H^s(D)$, the trace operator is well defined and $H^s_0(D)$ contains the null-trace elements. 
	
For each $s>0$ the topological dual of $H^{s}_0(D)$, denoted by $H^{-s}(D)$, is a subspace of the space $\D'$ of distributions on $D$, since $H^{s}_0(D)$ was obtained as the closure of $\D$ with respect to a suitable norm. 
Moreover, embeddings $H^s_0(D)\hookrightarrow L^2(D)\hookrightarrow H^{-s}(D)$ are continuous and dense. 
	
The following proposition gathers some classical statements on the relations between the spaces defined until now, intended in a two-dimensional setting.
	
\begin{proposition} The following hold:
\begin{itemize}
	\item (Sobolev embeddings) given $1< q<\infty$ and $\alpha> 0$, 
	\begin{equation*}
		H^{\alpha}(D)\hookrightarrow L^q(D), \qquad H_0^{\alpha}(D)\hookrightarrow L^q(D) 
	\end{equation*}
	whenever $\alpha-1\geq -\frac{2}{q}$. These embeddings are also compact if the inequality is strict;
	\item (Rellich Theorem)	given $\alpha>\beta\geq 0$,
	\begin{equation*}
		H^{\alpha}(D)\stackrel{c}{\hookrightarrow} H^{\beta}(D) , 
		\quad H_0^{\alpha}(D)\stackrel{c}{\hookrightarrow} H_0^{\beta}(D), 
		\quad H^{-\beta}(D)\stackrel{c}{\hookrightarrow} H^{-\alpha}(D);
	\end{equation*}
	\item (Morrey's inequality) given $l\in \N$ and $\lambda\in [0,1)$, for every $\alpha$ such that $\alpha-1>l+\lambda$ 
	\begin{equation*}
		H^{\alpha}(D)\stackrel{c}{\hookrightarrow}C^{l,\lambda}\left(\bar{D}\right),\qquad H_0^{\alpha}(D)\stackrel{c}{\hookrightarrow}C^{l,\lambda}\left(\bar{D}\right),
	\end{equation*} 
	and for every $p$ such that $1-\frac{2}{p}>l+\lambda$ 
	\begin{equation*}
		W^{1,p}(D)\stackrel{c}{\hookrightarrow}C^{l,\lambda}(\bar{D});
	\end{equation*}
	\item (Interpolation) given $\theta\in [0,1]$ and $(1-\theta)s+\theta s'\notin (\N+1/2)$, 
	\begin{align*}
		\left[H^s_0(D), H^{s'}_0(D)\right]_{\theta}&=H^{(1-\theta)s+\theta s'}_0(D),\\ \left[H^{-s}(D), H^{-s'}(D)\right]_{\theta}&=H^{-(1-\theta)s+\theta s'}(D) .
	\end{align*}
\end{itemize}
\end{proposition}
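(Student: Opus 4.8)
The plan is to reduce all four statements to the \emph{spectral} description of the $H_0^s(D)$ scale and then assemble classical results. Let $\{e_k\}_{k\in\N}$ be the $L^2(D)$-orthonormal eigenbasis of the Dirichlet Laplacian, with eigenvalues $0<\lambda_1\leq\lambda_2\leq\cdots$; by Weyl's law in dimension two $\lambda_k\asymp k$, and for $s\geq 0$ not a half-integer one has the norm equivalence $\|u\|_{H_0^s(D)}^2\asymp\sum_k\lambda_k^s|\langle u,e_k\rangle|^2$, with the negative-order spaces $H^{-s}(D)$ realised as the corresponding duals. In this picture every map between two spaces of the scale becomes the diagonal multiplier $u\mapsto(\lambda_k^{(\beta-\alpha)/2}\langle u,e_k\rangle)_k$, which is the lens through which I would treat the second and fourth bullets.

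Granting this, the Rellich statement is immediate: for $\alpha>\beta$ the embedding $H^\alpha\hookrightarrow H^\beta$ is the diagonal multiplier with symbol $\lambda_k^{(\beta-\alpha)/2}\to 0$, hence a norm limit of its finite-rank truncations and therefore compact; the $H_0$ and negative-order versions follow identically through the duality above. The interpolation identities are the standard complex-interpolation formula for domains of fractional powers of a positive self-adjoint operator (see \cite[Chapter 1]{LioMag}), namely $[\dom((-\Delta)^{s/2}),\dom((-\Delta)^{s'/2})]_\theta=\dom((-\Delta)^{((1-\theta)s+\theta s')/2})$, and the hypothesis $(1-\theta)s+\theta s'\notin\N+1/2$ is precisely what ensures that the resulting interpolation space is $H_0^{(1-\theta)s+\theta s'}(D)$ and not a strictly larger space.

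For the Sobolev embeddings, the algebraic condition $\alpha-1\geq-2/q$ rearranges in dimension two to $1/q\geq 1/2-\alpha/2$, which is exactly the (sub)critical Sobolev relation; I would obtain $H^\alpha(D)\hookrightarrow L^q(D)$ from the Sobolev--Slobodeckij side by the usual Gagliardo--Nirenberg--Sobolev argument, using smoothness of $D$ to extend to $\R^2$, and then inherit the $H_0^\alpha$ version from the continuous inclusion $H_0^\alpha(D)\hookrightarrow H^\alpha(D)$. Under strict inequality I would factor through an intermediate index $\alpha>\alpha'>1-2/q$, combining the continuous embedding $H^{\alpha'}(D)\hookrightarrow L^q(D)$ with the compact $H^\alpha(D)\stackrel{c}{\hookrightarrow}H^{\alpha'}(D)$ from the Rellich step. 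Morrey's inequality is the fractional Morrey embedding $H^\alpha(D)\hookrightarrow C^{l,\lambda}(\bar D)$, valid once the Sobolev index exceeds the Hölder scaling by more than the dimensional deficit ($\alpha-1>l+\lambda$), together with the classical $W^{1,p}(D)\hookrightarrow C^{l,\lambda}(\bar D)$; compactness again follows by interposing a space of slightly smaller index, as above. All of these are covered by \cite{Caffarelli2016} and \cite[Chapter 3]{Triebel83}.

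The genuinely delicate point, and the one I would spend effort on, is the behaviour at the half-integer thresholds $s\in\N+1/2$: there $H_0^s(D)\neq H^s(D)$, trace conditions enter, and the spectral and Sobolev--Slobodeckij definitions may disagree, so the norm equivalence, the identification of duals and the interpolation identity all require excluding these values (as the statement does) or the finer analysis of \cite[Chapter 3]{Triebel83}. I expect the main obstacle to be bookkeeping rather than conceptual: namely checking that the spectral scale used throughout the rest of the paper is consistent with the Sobolev--Slobodeckij definition across the whole non-exceptional range, since each individual embedding is classical once that consistency is in place.
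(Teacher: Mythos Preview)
The paper does not prove this proposition at all: it is stated as a collection of classical facts, with the surrounding text referring the reader to \cite{Caffarelli2016}, \cite[Chapter 1]{LioMag}, and \cite[Chapter 3]{Triebel83} for details. Your proposal therefore goes well beyond what the paper does, sketching actual proof ideas (spectral diagonalisation for Rellich, interpolation of domains of fractional powers, factoring through an intermediate index for compactness) where the authors simply cite the literature. Your sketch is sound and your concluding remark that ``all of these are covered by \cite{Caffarelli2016} and \cite[Chapter 3]{Triebel83}'' is in fact all the paper itself offers; the rest of your discussion, while correct, is supererogatory in this context.
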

	
We are also going to use a different definition of fractional Sobolev space, based on Fourier series expansions.
There exists an orthonormal basis of $L^2(D)$ of eigenfunctions $\set{e_n}_{n\in\N}\subset H^1_0(D)$ of the Dirichlet Laplacian $-\Delta$, i.e. $-\Delta e_n=\lambda_n e_n$ for some $\lambda_n\in\R$.
For $s\geq 0$, spectral fractional Laplacian is defined as a densely defined, positive self-adjoint operator on $L^2(D)$ by
\begin{gather*}
	(-\Delta)^s u= \sum_{n=1}^\infty \lambda_n^{s} \hat u_n e_n\in L^2(D),\quad \hat{u}_n=\brak{ u,e_n} ,\quad u\in \mathsf{D}\pa{(-\Delta)^s},\\
	\nonumber
	\mathsf{D}\pa{(-\Delta)^s}=\set{u=\sum_{n=1}^\infty \hat u_n e_n: \hat u_n\in \R,
		\sum_{n=1}^\infty \lambda_n^{2s} |\hat u_n|^2<\infty}:= \mathcal{H}^s.
\end{gather*}
It is easy to check that $\mathcal{H}^s$, endowed with the norm
\begin{equation*}
	\lVert {u}\rVert_{\mathcal{H}^s}^2=\brak{(-\Delta)^{s/2}u,(-\Delta)^{s/2} u},
\end{equation*}
is a Hilbert space. Moreover, $\mathcal{H}^s=H^s(D)$ for $s\in (0,1/2)$, $\mathcal{H}^s=H^s_0(D)$ for $s\in (1/2,3/2)$, and $\mathcal{H}^s=H^1_0(D)\cap H^s(D)$ for $s\in [3/2,2]$ in the sense that they coincide as function spaces and their norms are equivalent (cf. \cite{Caffarelli2016}).
For completeness, in the case $s=1/2$, $\mathcal{H}^{1/2}$ coincides with the so-called Lions-Magenes space $H^{1/2}_{00}(D)$ \cite{LioMag}, but we are not going to use it. 
	

\subsection{Weak$\star$ and bounded weak$\star$ topologies}

In the last part of this work we will make use of some notions on weak topologies that we now recall. Consider the space $L^\infty$ with the norm topology $\tau_n$. The weak$\star$ topology $\tau_\star$ on $L^\infty$ is the coarsest topology among those making the duality pairing $\brak{\cdot, \psi}:L^\infty \rightarrow \R$ continuous, for all $\psi\in L^1$.
	
A sequence of elements $\phi_n\in L^\infty$ is said to converge to some $\phi\in L^\infty$ in the weak$\star$ sense, $\phi_n\overset{\star}{\rightharpoonup} \phi$, if $\brak{\phi_n, \psi} \rightarrow \brak{\phi, \psi}$ for all $\psi\in L^1$. 
Due to the impossibility of metrizing $\tau_\star$, such notion of convergence does not completely identify the topology. 
However, we can circumvent this issue by only considering bounded sets as, being $L^1$ separable, finite-radius balls are weakly$\star$ metrizable (cf. \cite{brezis} for an extended discussion). 
Since we will mostly be working with sequences, it is then convenient to introduce the notion of bounded weak$\star$ topology $\tau^b_{\star}$, that is, the finest topology on $L^\infty$ coinciding with the weak$\star$ topology on all norm-bounded sets. 
We have the strict inclusions
\begin{equation*}
	\tau_\star \varsubsetneq \tau^b_\star \varsubsetneq \tau_n . 
\end{equation*}
As a consequence of the weak$\star$ metrizability of balls, a map $F:L^\infty \rightarrow \R$ is sequentially $\tau_\star$-continuous (i.e. $F(\phi_n)\rightarrow F(\phi)$ whenever $\phi_n\overset{\star}{\rightharpoonup} \phi$) if and only if $F$ is $\tau^b_\star$-continuous (see \cite{Bessaih2020}). 
	
Concerning measurability, Borel sets generated by $\tau_\star$ and $\tau_\star^b$ coincide, but they are a proper subset of those generated by $\tau_n$,
as opposed to the case where the space is separable in the strong topology, 
in which $\tau_\star^b$, $\tau_\star$ and $\tau_n$ coincide, cf. \cite{talagrand}. 
We will therefore always specify with respect to which norm measurability is intended.
	

\section{Overview of Main Results and Methodology} \label{sec:overview}

 
\subsection{Notation for the multi-layer system}

The quasi-geostrophic system \eqref{eq:3lqg}, \eqref{eq:3lqgvallis}
can be written as
\begin{equation}\label{eq:detsetting}
	\left( \frac{\partial}{\partial t} + u^i \cdot \grad \right) \bigg( \zeta^i + \sum_{j=1}^3 \tilde{l}_{ij} \psi^j \bigg) = \tilde f^i,
	\quad i=1,2,3,
\end{equation}
where $u^i$ is the velocity field of the $i$-th layer, $\zeta^i = \grad \times  u^i = \partial_2u^i_1 - \partial_1u^i_2$ is a scalar function and $\psi^i$ is the (scalar) stream function associated to $u^i$, i.e. $\psi^i$ satisfies $\grad^\perp \psi^i = u^i$, and $\Delta \psi^i = \zeta^i$. The vector $\tilde{f}^i$ accounts for external forces, while by $\tilde L = (\tilde l_{ij})_{i,j=1,2,3}$ we denote the layers' interaction matrix:
in the case of \eqref{eq:3lqgvallis} it has the form 
\begin{equation*}
	\tilde{L} = \begin{pmatrix}
	-\lambda_1 & \lambda_1 & 0 \\
	\lambda_2 & - 2 \lambda_2 & \lambda_2 \\
	0 & \lambda_3 & - \lambda_3
    \end{pmatrix} ,
\end{equation*}
where $\lambda_i$'s are combinations of physical constants of the problem. 
	
Setting $u = (u^1, u^2, u^3)$, $\psi = (\psi^1, \psi^2, \psi^3)$, $f = (f^1, f^2, f^3)$ and denoting by $\tilde{q}$ the potential vorticity
\begin{equation*}
	\tilde q = \Delta \psi + \tilde L \psi
\end{equation*}
(where $\Delta$ stands for $\operatorname{diag}(\Delta, \Delta, \Delta)$ abusing notation), 
the system \eqref{eq:detsetting} can be conveniently rewritten in the form 
\begin{equation}\label{eq:qgdet}
	\derp{}{t} \tilde q + u \cdot \grad \tilde q = \tilde f,
\end{equation}
in which the velocity field $u$ can be formally reconstructed from $\tilde{q}$ via $u = \grad^\perp (\Delta + \tilde L)^{-1} \tilde{q}$. 	
In the following, it may be convenient to assume that the matrix $\tilde L$ is symmetric and negative definite. 
This is not consistent with the physical reality that the model aims at describing, but we can circumvent the problem by introducing a scaling factor in the equation. 
Indeed, if $ D = \operatorname{diag}(h_1, h_2, h_3)$ such that $h_i>0$ and $h_i \lambda_i = \lambda$ for all $i=1,2,3$, then $D\tilde L$ is symmetric and negative semidefinite (since for example $(1,1,1)^\top \in \ker(L)$). 
By multiplying \eqref{eq:qgdet} by $D$, we can thus define 
\begin{equation*}
	q = (D\Delta + D\tilde L) \psi , \quad u = \grad^\perp (D\Delta + D\tilde L)^{-1} \tilde q = \tilde u , \quad f = D\tilde f,
\end{equation*}
and obtain an equation of the same form, 
\begin{equation*}
	\derp{}{t}q + u \cdot \grad q = f,
\end{equation*}
where now the operator $(D\Delta + D\tilde L)$ is symmetric and negative definite. 
For the sake of notation, in the following we write it as $(A+L)$, with  
\begin{equation*}
	A \coloneqq D\Delta \quad \text{and} \quad L \coloneqq D \tilde{L} ,
\end{equation*}
now assuming that $L$ is symmetric and negative semidefinite. 
	
In order to properly define the PDE problem and to give meaning to the inverse of $(A+L)$ we need to specify boundary conditions, and we will choose Dirichlet conditions,
\begin{equation*}
	\psi(t, \cdot)_{|\partial D}= 0 .
\end{equation*}
	
As for function spaces we will be working with, dealing with a multi-layer-problem we are interested in vector fields having some Sobolev regularity. 
Let us therefore define, for $p\in [1,+\infty]$, $s\in \N$,
\begin{align*}
	\W^{s,p}=\{u=(u^1,u^2,u^3)^t:\ u^1,u^2,u^3\in W^{s,p}\} , 
	\quad 
	\norm{u}{\W^{s,p}} = \sum_{i=1}^3 \norm{u^i}{W^{s,p}}.
\end{align*}
and for $s\in\R$ 
\begin{align*}
	\H^s=\{u=(u^1,u^2,u^3)^t:\ u^1,u^2,u^3\in H^s\},\quad \norm{u}{\H^s}^2=\sum_{i=1}^3 \norm{u^i}{H^s}^2.
\end{align*}
In an analogous way we can define $\W_0^{s,p}$ and $\H_0^{s}$ (for $s\geq 0$). 
When $s=0$, we set $\W^{s,p}_0(D)=\W^{s,p}(D)=\mathbf{L}^p(D)$.
	
	
\subsection{Solutions of Quasi-Geostrophic Equations}
	
We will make use of the following classic result for elliptic boundary value problems, for which we refer to \cite{AmbrosioPDe}.
\begin{lemma}\label{elliptic estimate}
For every $q\in \L^{\infty}(D)$ the boundary value problem 
\begin{equation}\label{QGBVP}
\begin{cases}
	q = A \psi + L \psi,  & x\in D, \\
	\psi(x) = 0, & x \in \partial D,
\end{cases} 
\end{equation}
has a unique weak solution $\psi\in \W^{2,p}(D)$. 
Moreover, for every $i=1,\ldots, 3$,
\begin{equation*}
	\|\grad^2 \psi^i\|_{\L^{p}}\le Cp\|q^i\|_{\L^{\infty}} \qquad \forall p\in [1, \infty).
\end{equation*}
\end{lemma}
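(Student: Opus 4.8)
The plan is to solve the system first in the energy space by the Lax--Milgram theorem, then to upgrade regularity to $\W^{2,p}$ by an elliptic bootstrap, and finally to read off the sharp linear dependence of the constant on $p$ from the scalar Calderón--Zygmund estimate. Throughout I use that $A+L=D\Delta+D\tilde L$ with $D=\operatorname{diag}(h_1,h_2,h_3)$ having positive entries and $L$ symmetric negative semidefinite.

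For existence and uniqueness I would associate to $-(A+L)$ the bilinear form $a(\psi,\phi)=\brak{D\grad\psi,\grad\phi}-\brak{L\psi,\phi}$ on $\H_0^1$, obtained by integrating the second-order part by parts against the homogeneous Dirichlet condition. It is plainly continuous, and it is coercive because $\brak{D\grad\psi,\grad\psi}\ge(\min_i h_i)\norm{\grad\psi}{\L^2}^2$ while $-\brak{L\psi,\psi}\ge 0$; combined with Poincaré this gives $a(\psi,\psi)\ge c\,\norm{\psi}{\H^1}^2$. Since $q\in\L^\infty(D)\hookrightarrow\L^2(D)\hookrightarrow\H^{-1}(D)$, Lax--Milgram produces a unique weak solution $\psi\in\H_0^1$ with $\norm{\psi}{\H^1}\le C\norm{q}{\L^2}$.

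Because $D$ is diagonal, the equation $(A+L)\psi=q$ is equivalent to the family of scalar Poisson problems $\Delta\psi^i=h_i^{-1}q^i-(\tilde L\psi)^i$ with $\psi^i|_{\partial D}=0$, so that all the interlayer coupling sits in the zeroth-order term $\tilde L\psi$. As $\psi\in\H_0^1\hookrightarrow\L^r$ for every $r<\infty$ and $q\in\L^\infty$, each right-hand side lies in $L^2(D)$, so scalar elliptic regularity gives $\psi\in\H^2(D)$ with $\norm{\psi}{\H^2}\le C\norm{q}{\L^2}$. The key consequence is the \emph{$p$-independent} a priori bound obtained from the two-dimensional embedding $H^2(D)\hookrightarrow L^\infty(D)$, namely $\norm{\psi}{\L^\infty}\le C\norm{q}{\L^\infty}$; reinserting $\psi\in\L^\infty$ into the right-hand side then shows $\Delta\psi^i\in L^p$ and hence $\psi\in\W^{2,p}(D)$ for every $p<\infty$.

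For the quantitative estimate I would invoke the classical $L^p$ bound for the Dirichlet Laplacian in its sharp form $\norm{\grad^2 w}{L^p}\le C p\,\norm{\Delta w}{L^p}$, valid on a bounded smooth domain with $C$ independent of $p$ (the operator norm of the second-order Calderón--Zygmund operator $\grad^2(-\Delta)^{-1}$ grows linearly in $p$; for $p\le 2$ one reduces to the $L^2$ case by Hölder on the bounded domain). Applying it to each $\psi^i$, the forcing term contributes $\norm{q^i}{L^p}\le C\norm{q^i}{L^\infty}$, while the coupling term is absorbed by the previous a priori bound, $\norm{(\tilde L\psi)^i}{L^p}\le C\norm{\psi}{\L^p}\le C\norm{\psi}{\L^\infty}\le C\norm{q}{\L^\infty}$, \emph{uniformly in $p$}. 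This yields $\norm{\grad^2\psi^i}{L^p}\le C p\,\norm{q}{\L^\infty}$, i.e.\ the asserted estimate, the coupling through $\tilde L$ being responsible for the appearance of the full vector norm of $q$. The main obstacle is precisely to preserve the \emph{linear} growth in $p$: estimating $\tilde L\psi$ directly through the $\H^1$ norm would cost an extra $\sqrt p$ via the two-dimensional Sobolev embedding $\H^1\hookrightarrow\L^p$ and degrade the bound to order $p^{3/2}$, so it is essential that the $\L^\infty$ control of $\psi$ be secured first, independently of $p$, leaving the single factor $p$ to come solely from the Calderón--Zygmund estimate.
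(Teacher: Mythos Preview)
The paper does not actually prove this lemma: it is stated as a classical elliptic result and immediately referred to \cite{AmbrosioPDe}. Your argument is a correct and complete way to establish it, combining Lax--Milgram for existence and uniqueness in $\H_0^1$, a bootstrap to $\H^2$ via the decoupling of $(A+L)\psi=q$ into scalar Poisson problems $\Delta\psi^i=h_i^{-1}q^i-(\tilde L\psi)^i$, and then the sharp scalar Calder\'on--Zygmund bound $\|\grad^2 w\|_{L^p}\le Cp\,\|\Delta w\|_{L^p}$ for the Dirichlet Laplacian. Your observation that the coupling term $\tilde L\psi$ must be controlled through a $p$-independent $\L^\infty$ bound on $\psi$ (obtained from $H^2\hookrightarrow L^\infty$ in two dimensions) rather than through $\H^1\hookrightarrow\L^p$ is exactly the point needed to keep the linear growth in $p$.

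One remark: because of the interlayer coupling, your argument yields $\|\grad^2\psi^i\|_{L^p}\le Cp\,\|q\|_{\L^\infty}$ with the full vector norm of $q$ on the right, not $\|q^i\|_{\L^\infty}$ as literally written in the statement. This is not a defect of your proof but of the statement: since $\psi^i$ depends on all components of $q$ through $L$, the bound with $q^i$ alone cannot hold in general, and indeed when the paper invokes the lemma later (e.g.\ in the uniqueness argument) it uses the full norm $\|q\|_{\L^\infty}$.
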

We are interested in a stochastic version of \eqref{eq:qgdet}. Consider a filtered probability space $(\Omega, \F,\F_t, \PP)$, let $\{\rho_k\}_{k \geq 0}$ be an orthonormal basis of $\L^2$ made of eigenvectors of the operator $(A+L)$ and define the Wiener process  
\begin{equation*}
	W_t(x) \coloneqq \sum_{k=0}^{+\infty} c_k \rho_k(x) W^k_t ,
\end{equation*}
where $\{c_k\}_k \subset \R$ and $\{W^k\}_k$ are independent standard $\R^3$-valued Brownian motions. Assume that $\{\F_t\}_{t\ge 0}$ is the filtration generated by this family of Brownian motions. 
This choice implies that $W_t|_{\partial D} \equiv 0$.
Notice also that one can consider $t\in\R$ just by taking $W^k_t \coloneqq \tilde{W}^k_{-t}$ for $t < 0$ where $\{\tilde{W}^k\}_k$ are independent copies of  $\{W^k\}_k$.	
The space regularity for $W$ is encoded in its Fourier coefficients.
From now on we will assume that $W_t\in\H^{5/2}$ for all $t\in \R$, hence 
\begin{equation*}
	\sum_{k}c_k^2\|\rho_k\|^2_{\H^{5/2}}< \infty .
\end{equation*}
	
The problem under consideration can now be formulated as 
\begin{equation}\label{eq:qgsto} 
\begin{cases}
	d q_t + \left[ (u_t \cdot \grad) q_t + \gamma q_t \right] d t = d W_t, \qquad &\text{on } D, \\
	\grad \cdot u_t = 0, \,\, u_t = \grad^\bot (A+L)^{-1} q_t, &\text{on } D, \\
	q_t = 0, & \text{on } \partial D,
\end{cases}
\end{equation}
where $\gamma>0$ is a damping coefficient ruling the friction term that prevents the accumulation of energy introduced by the random forcing
(we refer again to \cite{Pedlosky2013} for a physical motivation).
Recall that, given a filtered probability space $(\Omega, \F, \{\F_t\}_{t\ge 0}, \PP)$, a collection of random variables $X_t(\omega), \ t\in [0, T], \ \omega \in \Omega$ taking values in a Banach space $B$ is an adapted stochastic process if for every $t\in \R$ $X_t$ if $\F_t$-measurable. It is said \textit{progressively measurable} if for every $t\le T$, the mapping $[0, t]\times \Omega \ni (s, \omega)\rightarrow X_s(\omega)\in B$ is measurable with respect to $\B([0, t])\otimes \F_t$. 

\begin{definition}\label{def:solutionqgsto}
Given $q_0\in \L^2$, a stochastic process $q_t$ such that 
\begin{equation*}
	q\in C_w(0, T; \L^2) \qquad \PP\text{-a.s.}
\end{equation*}
is said to be a weak solution of \eqref{eq:qgsto} if it is adapted with respect to $\{\F_t\}_{t\leq0}$ and for all $\phi \in \H^1_0\cap \H^2$ and for all $t\in[0,T]$, it holds $\PP$-almost surely 
\begin{equation}\label{eq:sol1}
	\brak{q_t, \phi} 
	= 
	\brak{q_0, \phi} 
	+ \int_0^t \brak{q_s, u_s \cdot \grad \phi }ds  
	- \gamma\int_0^t\brak{q_s, \phi}ds 
	+ \sum_{k = 0}^\infty c_k\brak{\rho_k, \phi}W^k_t.
\end{equation}
\end{definition}
The regularity requirement on test functions is necessary to make sense of the nonlinear term. We will be able to loosen it when assuming better integrability of the initial datum. The following is the first main result of the present paper:
	
\begin{theorem}[Well-posedness]\label{thm:main}
For every $q_0\in \L^{\infty}$ there exists a pathwise unique solution to equation \eqref{eq:qgsto} in the sense of \cref{def:solutionqgsto}. 
Moreover, $q\in C_w(0, T; \L^\infty)$ $\PP$-almost surely, $q_t$ is progressively measurable as a process with values in $(\L^\infty, \tau_\star)$ and the formulation \eqref{eq:sol1} is valid for all $\phi \in \H^1_0$. 
It also holds that the difference $q-W\in W^{1,2}(0, T; \H^{-1})$ $\PP$-almost surely. 
		
In addition, if $q_0\in \W^{1,4}$, then $q\in C_w(0,T; \W^{1,4})$ $\PP$-almost surely and $q_t$ is progressively measurable as a process in $\W^{1,4}$.
\end{theorem}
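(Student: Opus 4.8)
The plan is to reduce the stochastic problem to a pathwise (random) PDE, to construct solutions by a vanishing-viscosity scheme compatible with the Dirichlet condition on the potential vorticity, and to establish uniqueness by a Yudovich-type energy estimate phrased in the velocity variable.

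First I would remove the noise by subtracting the Ornstein–Uhlenbeck process $z$ solving $dz_t+\gamma z_t\,dt=dW_t$ with $z_0=0$. Since $W_t\in\H^{5/2}$ for every $t$, one checks that $z\in C(0,T;\H^{5/2})$ $\PP$-a.s., and by Morrey's inequality $\H^{5/2}\hookrightarrow C^{1,\lambda}$, so that $z$ and $\grad z$ are bounded pathwise. Then $v\coloneqq q-z$ solves, for fixed $\omega$, the transport-type equation $\partial_t v+u\cdot\grad v=-\gamma v-u\cdot\grad z$ with $u=\op(v+z)$ and $v|_{\partial D}=0$, which I would analyse deterministically; measurability in $\omega$ then follows from that of $z$ and from continuity of the solution map. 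This reduction also yields the claim $q-W\in W^{1,2}(0,T;\H^{-1})$, since $\partial_t(q-W)=-\grad\cdot(uq)-\gamma q\in\H^{-1}$ with $uq\in\L^2$ by \cref{elliptic estimate}.

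For existence I would regularise by the \emph{non-physical} viscosity $\nu\Delta$ acting on $v$ with the same Dirichlet condition $v^\nu|_{\partial D}=0$: this is dissipative and respects the maximum principle, so it does not generate vorticity at the boundary, and the resulting parabolic equation is well posed by a standard Galerkin/fixed-point argument. The heart of the matter is uniform-in-$\nu$ a priori bounds. Testing against $|v^\nu|^{p-2}v^\nu$, the transport term drops by $\grad\cdot u=0$, both the friction and the viscous term have the favourable sign, and the only input is the source $u\cdot\grad z$, bounded in $\L^\infty$ by $\norm{\grad z}{\L^\infty}\norm{u}{\L^\infty}\lesssim\norm{\grad z}{\L^\infty}\norm{q}{\L^\infty}$ through \cref{elliptic estimate} (with a fixed exponent $p>2$) and Morrey's embedding; a Grönwall argument with $p$-independent rate then bounds $\norm{v^\nu}{\L^p}$ uniformly in $\nu$, and letting $p\to\infty$ yields a uniform $L^\infty(0,T;\L^\infty)$ bound. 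Simultaneously $\partial_t v^\nu$ is bounded in $L^2(0,T;\H^{-1})$, with the viscous contribution $\nu\Delta v^\nu\to0$ in $L^2(0,T;\H^{-1})$ thanks to the energy bound $\nu\int_0^T\norm{\grad v^\nu}{\L^2}^2\le C$. By Aubin–Lions these bounds give strong compactness in $C(0,T;\H^{-s})$ for some small $s>0$ together with weak$\star$ convergence in $L^\infty(0,T;\L^\infty)$; since $u^\nu=\op(v^\nu+z)\to u$ strongly in $\L^2$ (the operator $\op$ gains net regularity), the nonlinear term passes to the limit as a strong–weak product, producing a solution in the sense of \cref{def:solutionqgsto}. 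Weak$\star$ continuity in $\L^\infty$ and progressive measurability in $(\L^\infty,\tau_\star)$ follow from the uniform bound and the equation.

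Uniqueness I would obtain by a Yudovich/Osgood argument carried out in the energy $E(w)\coloneqq-\brak{(A+L)^{-1}w,w}$ associated to the velocity difference, where $w=q_1-q_2$. Since $L$ is symmetric negative semidefinite, $E(w)\simeq\norm{\tilde u}{\L^2}^2$ with $\tilde u=\op w$, and pairing the (noise-free) difference equation with $-\Psi\coloneqq-(A+L)^{-1}w\in\H^1_0\cap\H^2$ is justified because $\partial_t w\in\H^{-1}$. In this pairing the coupling term $\brak{\tilde u\cdot\grad q_2,-\Psi}$ vanishes identically (as $\grad^\perp\Psi\perp\grad\Psi$), while the transport term reduces, after its symmetric part integrates to zero, to $\int(\grad\Psi)^\top(\grad u_1)(\grad\Psi)$ plus lower-order contributions of $L$; bounding this by $\norm{\grad u_1}{\L^p}\norm{\tilde u}{\L^{2p'}}^2$, using $\norm{\grad u_1}{\L^p}\le Cp\norm{q_1}{\L^\infty}$ from \cref{elliptic estimate} together with the Gagliardo–Nirenberg interpolation $\norm{\tilde u}{\L^{2p'}}^2\lesssim E^{1/p'}$, and optimising over $p\sim\log(1/E)$, produces the Osgood inequality $\tfrac{d}{dt}E\lesssim E\log(e/E)$, whence $E\equiv0$ and $w\equiv0$. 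Finally, for the propagation of $\W^{1,4}$ regularity I would differentiate the equation for $v$ and estimate $\grad v$ in $\L^4$: the transport and friction terms are again controlled, but the stretching term $(\grad u)\grad v$ requires $\norm{\grad u}{\L^\infty}$, which is not bounded by $\norm{q}{\L^\infty}$ directly; here I would invoke the logarithmic (Beale–Kato–Majda/Yudovich type) estimate $\norm{\grad u}{\L^\infty}\lesssim\norm{q}{\L^\infty}\log(e+\norm{q}{\W^{1,4}})+\norm{q}{\L^2}$, so that the resulting differential inequality is again of Osgood type and yields finite $\W^{1,4}$ bounds on every $[0,T]$, with $C_w$ continuity and measurability inherited from the construction. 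The main obstacles are precisely the two borderline logarithmic estimates — the Osgood closure of uniqueness and the control of $\norm{\grad u}{\L^\infty}$ in the $\W^{1,4}$ bound — together with making the vanishing-viscosity limit genuinely uniform up to the boundary, which is exactly what the non-physical Dirichlet viscosity secures.
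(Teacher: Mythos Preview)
Your proposal is correct and follows essentially the same route as the paper: a non-physical viscous approximation with Dirichlet condition on the potential vorticity, uniform-in-$\nu$ $\L^p$ bounds via testing against $|v|^{p-2}v$, Aubin--Lions compactness, Yudovich--Osgood uniqueness at the level of the stream-function energy, and a logarithmic (Ferrari/BKM-type) estimate on $\norm{\grad u}{\L^\infty}$ for the $\W^{1,4}$ propagation. The only cosmetic differences are that the paper subtracts $W$ itself (compensating with an extra $-\eps^2\Delta W$ term in the viscous approximation so that the equation for $\eta^\eps=q^\eps-W$ carries $\eps^2\Delta\eta^\eps$ cleanly) rather than the OU process $z$, and that it phrases the Yudovich closure via the H\"older exponent $p=2/\eps$ on $\norm{\grad^2\psi}{\L^{2/\eps}}\le C\eps^{-1}\norm{q}{\L^\infty}$ rather than Gagliardo--Nirenberg plus optimisation; these are equivalent.
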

	
Existence of solutions will be established by means of a vanishing viscosity argument in  \cref{prop:existence}, while uniqueness follows from the standard argument of Yudovich \cite{YUDOVICH19631407}, cf. \cref{prop:uniqueness}.

The following extends the notion of weak solution to time-dependent test functions,
and it will prove to be a convenient formulation in the forthcoming arguments.
\Cref{def:solutionqgsto,def:sol2} are in fact equivalent, as one can prove along the lines of \cite[Lemma 3]{luongo21inviscid}.

\begin{definition}\label{def:sol2}
Given $q_0\in \L^2$, a stochastic process $q^\eps_t$ such that 
\begin{equation*}
	q\in C_w(0, T; \L^2) \qquad \PP\text{-a.s.}
\end{equation*}
is said to be a weak solution of \eqref{eq:qgsto} if it is adapted with respect to $\{\F_t\}_{t\leq0}$  and, for all $\phi \in C(0,T; \H^1_0\cap \H^2)\cap C^1(0,T; \L^2)$ and for all $t\in[0,T]$, $\PP$-almost surely 
\begin{multline}\label{eq:sol2}
    \brak{q_t, \phi_t} 
	= \brak{q_0, \phi_0} + \int_0^t \brak{q_s, \partial_s \phi_s} d s + \int_0^t \brak{q_s, u_s \cdot \grad \phi_s }ds  - \gamma \int_0^t \brak{q_s, \phi_s} d s \\ 
	+ \sum_{k = 0}^\infty c_k \brak{\rho_k, \phi_t} W_t^k - \sum_{k = 0}^\infty \int_0^t c_k \brak{\rho_k, \partial_s \phi_s} W_s^k d s
\end{multline}
\end{definition}
	

\subsection{Properties of Solutions and Existence of Invariant Measures}\label{sec:prop of solutions}
	
Consider the solution $q_t$ of \eqref{eq:qgsto} obtained in \cref{thm:main}.
Let $B_b(\L^\infty, \tau_\star)$ be the set of bounded real-valued functions on $\L^\infty$ that are measurable with respect to the weak$\star$ topology.
For $\phi\in B_b(\L^\infty, \tau_\star)$ we denote 
\begin{equation*}
	P_t\phi(\chi)\coloneqq\expt[]{\phi(q(t, \chi))}.  
\end{equation*}
If $q_t$ possesses the Markov property, the latter is the associated Markov semigroup of operators.
This is in fact the case, but the functional setting in $\L^\infty$ requires a careful argument, to which we devote \Cref{ssec:markov}.
In particular, \cref{lemma:markov} provides sufficient conditions for the Markov property of stochastic flows taking values in topological vector spaces,
under the assumption that a separable Banach space embedded in the latter is preserved by the evolution.
In order to show that such condition is satisfied in our particular case, that is for the solution of \cref{thm:main},
we can rely on the stability of solutions in $\W^{1,4}$, to be proved in \cref{sec:stability}.
	
Once established the Markov property of the stochastic process under consideration,
we can adopt the following standard definition of invariant measure:
\begin{definition}
An invariant measure for the system \eqref{eq:qgsto} is a probability measure $\mu$ on $(\L^{\infty}(D), \B(\tau^b_\star))$ such that
\[
    \int P_t f d\mu =\int fd\mu, \qquad \forall t\ge 0, \forall \phi \in C_b(L^\infty, \B(\tau^b_\star)).
\]
\end{definition}
\Cref{sec:invariantmeasures} will be devoted to the proof of the anticipated existence theorem for invariant measures:
\begin{theorem}\label{thm:invmeasure}
Under the hypothesis of \cref{thm:main}, there exist at least one invariant measure for the problem \eqref{eq:qgsto}.
\end{theorem}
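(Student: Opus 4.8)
The plan is to apply the Krylov--Bogoliubov method within the weak$\star$ framework set up in \cref{sec:preliminaries}. I would fix a deterministic initial datum, say $q_0 = 0 \in \L^\infty$, let $q_t$ be the corresponding solution from \cref{thm:main}, and define the time-averaged measures on $(\L^\infty, \B(\tau_\star^b))$ by $\int \phi \, d\mu_T = \frac{1}{T}\int_0^T \expt{\phi(q_t)} \, dt$ for $\phi \in C_b(\L^\infty, \tau_\star^b)$. These are well defined since $q_t$ is progressively measurable in $(\L^\infty, \tau_\star)$ and the Borel structures $\B(\tau_\star)$ and $\B(\tau_\star^b)$ coincide. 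The theorem then follows once we establish (i) the Feller property, i.e. that $P_s$ maps $C_b(\L^\infty, \tau_\star^b)$ into itself, and (ii) tightness of $\set{\mu_T}_{T \ge 1}$. For (i) I would invoke the Markov property established in \cref{ssec:markov} together with the $\W^{1,4}$ stability of solutions from \cref{sec:stability}: continuous dependence on the initial datum in the weak$\star$ sense upgrades, via boundedness of $\phi$ and dominated convergence, to sequential $\tau_\star$-continuity of $\chi \mapsto P_s\phi(\chi)$, which is equivalent to $\tau_\star^b$-continuity by the metrizability of balls.

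The core of the proof, and the main obstacle, is (ii): a uniform-in-time moment bound $\sup_{t \ge 0} \expt{\norm{q_t}{\L^\infty}} < \infty$. A direct It\^o estimate of $\frac{d}{dt}\expt{\norm{q_t}{\L^p}^p}$ is clean for each fixed $p$ — the transport term drops out since $\grad \cdot u = 0$, the friction contributes $-\gamma p \, \expt{\norm{q_t}{\L^p}^p}$, and the It\^o correction is controlled by $\norm{\sum_k c_k^2 \rho_k^2}{\L^\infty}$ and lower-order $\L^{p-2}$ moments — yielding in particular $\sup_t \expt{\norm{q_t}{\L^2}^2} + \sup_t \expt{\norm{q_t}{\L^4}^4} < \infty$. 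However the implied constants degenerate as $p \to \infty$, so this does not by itself bound the $\L^\infty$ norm. To circumvent this I would split $q_t = z_t + v_t$, where $z_t = \int_0^t e^{-\gamma(t-s)} \, dW_s$ is the Ornstein--Uhlenbeck process solving $dz_t = -\gamma z_t \, dt + dW_t$; being Gaussian with values in $\H^{5/2} \hookrightarrow \C^{1,\lambda}$ (Morrey's inequality), it satisfies $\sup_t \expt{\norm{z_t}{\L^\infty}} < \infty$ and $\sup_t \expt{\norm{\grad z_t}{\L^\infty}^2} < \infty$ by Fernique's theorem.

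The remainder $v_t$ solves the pathwise transport equation $\partial_t v + u \cdot \grad v + \gamma v = -u \cdot \grad z$, with $v|_{\partial D} = 0$. An $\L^p$ energy estimate (again annihilating the transport term) gives, after dividing by $\norm{v_t}{\L^p}^{p-1}$, a differential inequality whose $p \to \infty$ limit reads
\[
	\frac{d}{dt}\norm{v_t}{\L^\infty} + \gamma \norm{v_t}{\L^\infty} \le C \norm{u_t}{\L^\infty} \norm{\grad z_t}{\L^\infty} .
\]
The decisive point is that $\norm{u_t}{\L^\infty} = \norm{\op q_t}{\L^\infty} \le C \norm{q_t}{\L^4}$, by elliptic regularity for $(A+L)^{-1}$ (\cref{elliptic estimate} and its $\L^4$ analogue) and the embedding $\W^{1,4} \hookrightarrow \L^\infty$, so the source factorizes into two quantities with uniformly bounded second moments rather than the problematic self-coupling $\norm{v}{\L^\infty}\norm{\grad z}{\L^\infty}$. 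Taking expectations, applying Cauchy--Schwarz with the $\L^4$-moment bound on $q$ and the Gaussian bound on $\grad z$, and using the friction term, Gr\"onwall's lemma closes the estimate to $\sup_t \expt{\norm{v_t}{\L^\infty}} < \infty$, whence $\sup_t \expt{\norm{q_t}{\L^\infty}} < \infty$.

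With the uniform bound in hand, tightness is immediate: by Banach--Alaoglu the balls $\set{\norm{\cdot}{\L^\infty} \le R}$ are $\tau_\star$-compact, and metrizable since $\L^1$ is separable, hence $\tau_\star^b$-compact; Markov's inequality gives $\mu_T(\norm{\cdot}{\L^\infty} > R) \le R^{-1} \sup_t \expt{\norm{q_t}{\L^\infty}}$, uniformly in $T$. A Prokhorov extraction on these metrizable compacts yields a subsequence $\mu_{T_n} \rightharpoonup \mu$. Finally, invariance follows from the standard telescoping identity: for $\phi \in C_b(\L^\infty, \tau_\star^b)$ and $s > 0$,
\[
	\int P_s\phi \, d\mu_{T_n} - \int \phi \, d\mu_{T_n} = \frac{1}{T_n}\left( \int_{T_n}^{T_n+s} P_t\phi(q_0) \, dt - \int_0^s P_t\phi(q_0) \, dt \right) ,
\]
whose right-hand side is bounded by $2s\norm{\phi}{\infty}/T_n \to 0$; passing to the limit using $P_s\phi \in C_b(\L^\infty, \tau_\star^b)$ (Feller) and $\mu_{T_n} \rightharpoonup \mu$ gives $\int P_s\phi \, d\mu = \int \phi \, d\mu$, i.e. $\mu$ is invariant. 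I expect the delicate steps to be the uniform $\L^\infty$ moment bound above — where the Ornstein--Uhlenbeck splitting and the factorized source estimate are essential to avoid an exponential-in-time blow-up — and the careful handling of the non-metrizable weak$\star$ topology, which is already laid out in \cref{sec:preliminaries}.
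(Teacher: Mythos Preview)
Your proposal is correct and follows a genuinely different route from the paper for the tightness step. The paper proves a \emph{pathwise} bound $\sup_{t_0\le 0}\norm{q(t_0,0,0)}{\L^\infty}\le r(\omega)$ via the Flandoli trick (\cref{prop:flandolitrick}): it introduces a stationary Ornstein--Uhlenbeck process $\zeta_\lambda$ with a \emph{free} parameter $\lambda$, obtains for $\theta_\lambda=q-\zeta_\lambda$ the Gr\"onwall inequality with coefficient $C\norm{\grad\zeta_\lambda}{\L^\infty}-\gamma$ (a self-coupling in $\L^\infty$), and controls the time integral of this coefficient through the ergodic theorem, choosing $\lambda$ large so that $\EE\norm{\zeta_\lambda}{\H^\alpha}<\gamma/(2C)$. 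You instead aim for a \emph{moment} bound $\sup_t\EE\norm{q_t}{\L^\infty}<\infty$: the key observation is that the source in the $v$-equation can be estimated as $\norm{u}{\L^\infty}\norm{\grad z}{\L^\infty}\le C\norm{q}{\L^4}\norm{\grad z}{\L^\infty}$, and $\sup_t\EE\norm{q_t}{\L^4}^2$ is available independently from an It\^o estimate on $\norm{q}{\L^4}^4$. This decouples the $\L^4$ level from the $\L^\infty$ level and avoids the self-coupling entirely, so no ergodic argument or tuning of the OU parameter is needed---you simply take $\lambda=\gamma$. Your argument is more elementary and gives exactly what tightness requires; the paper's gives a stronger almost-sure statement that could be useful for random attractors or other pathwise objects. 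One minor point: at the compactness step the paper invokes Jakubowski's version of Prokhorov because $(\L^\infty,\tau^b_\star)$ is not metrizable, verifying the countable point-separating family via a dense subset of $\L^1$; your appeal to ``Prokhorov on metrizable compacts'' is morally the same but should be made precise in this way.
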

	
The argument consists in averaging over larger and larger time intervals the law of the stochastic process $q_t$,
in the classical Krylov-Bogoliubov fashion. The crucial step is thus to prove tightness of the time-averaged law of $q_t$:
as in \cite{Bessaih2020} we rely on an idea introduced by Flandoli \cite{Flandoli1994} in order to obtain the necessary uniform bounds,
to be detailed in \cref{prop:flandolitrick}.
	
\begin{remark}\label{rmk:regularity}
Our regularity assumption on the Wiener process $W_t$ slightly improves the ones in \cite{Bessaih1999,Bessaih2020}, where the authors assumed regularity in $H^{3+}$ for the stochastic forcing (at the level of vorticity). In particular, less regularity on the noise is required both to perform the vanishing viscosity argument, thanks to the introduction of an additional stochastic term which vanishes in the limit, and to ensure the stability of the solution in $W^{1,4}$ thanks to a more careful energy estimate. As a consequence, either \autoref{thm:main} and \autoref{thm:invmeasure} are improved, as they hold under weaker assumption on the space regularity of the noise.
\end{remark}


\section{Well-posedness of Stochastic Quasi-Geostrophic Systems}\label{sec:wellposedness}
	
In order to study the existence of solutions to \eqref{eq:qgsto} it is convenient to focus on the quantity $\eta = q - W$, and to study a vanishing viscosity limit for the dynamics of $\eta$.
Consider the Navier-Stokes system
\begin{equation}\label{NSE}
\begin{cases}
	d q_t^\epsilon + ( u_t^\epsilon \cdot \grad q_t^\epsilon + \gamma q_t^\epsilon ) d t = \epsilon^2 \Delta q_t^\epsilon d t + -\eps^2\Delta W_tdt + d W_t, \\
	u^\epsilon = \grad^\bot (A+L)^{-1} q^\epsilon, \\
	q^{\eps}_t|_{\partial D}= 0,
\end{cases}
\end{equation}
including both a viscosity term $\eps^2\Delta q_tdt$ and a further stochastic term $-\eps^2\Delta W_tdt$ that will be instrumental in dealing with the quantity $\eta = q - W$ (see \cref{rmk:aboutlemma:ViscSolBound} below). Notice that by assumption such additional stochastic forcing takes values in $C(0,T; \H^{1/2})$.
	
The condition $u^\epsilon\cdot n= 0 $ on $\partial D$ is implicit in the very definition of $u^\epsilon$, as reconstructed from the operator $\op$ with zero boundary condition. 
The notion of solution for \eqref{NSE} is similar to the one already given for the problem with $\eps=0$, except for the fact that a second order term in the equation allows to consider more regular solutions.
	
\begin{definition}\label{def:solutionNSE}
Given $q_0\in \L^2$, a stochastic process $q^\eps_t$ such that 
\begin{equation*}
	q^\eps(\omega)\in C(0, T; \L^2)\cap L^2(0, T; \H^1_0) \qquad a.s.
\end{equation*}
and is progressively measurable with respect to the norm-induced topologies is said to be a weak solution of \eqref{NSE} if, for all $\phi \in \H^1_0\cap \H^2$ and every $t\in[0,T]$, it holds $\PP$-almost surely
\begin{multline*}\label{eq:sol1-NSE}
	\brak{q^\eps_t, \phi}  
	= \brak{q^\eps_0, \phi} + \int_0^t \brak{q^\eps_s, u^\eps_s \cdot \grad \phi }ds  - \gamma\int_0^t\brak{q^\eps_s, \phi}ds \\ 
	+ \eps^2\int_0^t{\brak{\grad W_s - \grad q^\eps_s, \grad\phi_s}}ds + \sum_{k\ge 0}c_k\brak{\rho_k, \phi}W^k_t ,
\end{multline*}
where $u^\epsilon_t\coloneqq \grad^\perp (A+L)^{-1}q^\eps_t$.
\end{definition}
\noindent
As in the case of \cref{def:solutionqgsto,def:sol2}, we can use an equivalent time-dependent definition for \eqref{NSE}:
\begin{lemma}
Let $q^\eps$ be solution of the system \eqref{NSE}. Then for every
\begin{equation*}
	\phi \in C(0,T; \L^2)\cap L^2(0,T; \H_0^1)\cap H^1(0,T; \H^{-1})
\end{equation*}
it holds
\begin{multline*}
	\brak{q^\epsilon_t, \phi_t} 
	= \brak{q_0, \phi_0} + \int_0^t\brak{q^\epsilon_s, \partial_s\phi_s}ds + \int_0^t \brak{q^\epsilon_s, u^\epsilon_s\cdot \grad \phi_s }ds - \gamma\int_0^t\brak{q_s, \phi_s}ds \\ 
	- \eps^2\int_0^t\brak{\grad q^\epsilon_s - \grad W_s, \grad \phi_s}ds \\ 
	+ \sum_{k = 0}^\infty c_k\brak{\rho_k, \phi_t}W_t^k - \sum_{k = 0}^\infty \int_0^t c_k\brak{\rho_k, \partial_s\phi_s}W^k_sds.
\end{multline*}
		
\end{lemma}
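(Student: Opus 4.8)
The plan is to reduce the time-dependent identity to the fixed-test-function formulation of \cref{def:solutionNSE} by a density argument, carrying out the genuinely stochastic step only on a dense class of separable test functions. First I would fix $\varphi \in \H^1_0\cap\H^2$ and a scalar $\psi\in C^1([0,T])$, and set $\phi_t = \psi(t)\varphi$. By \cref{def:solutionNSE} the real-valued process $X_t\coloneqq\brak{q^\eps_t,\varphi}$ is a continuous semimartingale, $X_t = X_0 + \int_0^t F_s\,ds + M_t$, with drift density $F_s = \brak{q^\eps_s,u^\eps_s\cdot\grad\varphi} - \gamma\brak{q^\eps_s,\varphi} + \eps^2\brak{\grad W_s - \grad q^\eps_s,\grad\varphi}$ and martingale part $M_t = \sum_k c_k\brak{\rho_k,\varphi}W^k_t = \brak{W_t,\varphi}$. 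Applying the Itô product rule to $\psi(t)X_t$ (there is no cross-variation, $\psi$ being deterministic and $C^1$) gives $\psi(t)X_t = \psi(0)X_0 + \int_0^t\psi'(s)X_s\,ds + \int_0^t\psi(s)\,dX_s$, and substituting $dX_s = F_s\,ds + dM_s$ produces all the deterministic terms of the claimed identity. It then remains to rewrite the Itô integral $\int_0^t\psi(s)\,dM_s$ in pathwise form; stochastic integration by parts yields $\int_0^t\psi(s)\,dM_s = \psi(t)M_t - \int_0^t\psi'(s)M_s\,ds$, which, recalling $M_s=\brak{W_s,\varphi}$ and $\partial_s\phi_s = \psi'(s)\varphi$, is exactly $\sum_k c_k\brak{\rho_k,\phi_t}W^k_t - \sum_k\int_0^t c_k\brak{\rho_k,\partial_s\phi_s}W^k_s\,ds$. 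This establishes the identity for separable $\phi$, and by linearity for any finite sum $\sum_j\psi_j\varphi_j$.

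The second step is to approximate a general test function by such finite combinations and pass to the limit. Expanding $\phi$ on the eigenbasis of $A+L$ and truncating produces $\phi^{(N)}\to\phi$ in $C(0,T;\L^2)\cap L^2(0,T;\H^1_0)\cap H^1(0,T;\H^{-1})$, with each $\phi^{(N)}$ a finite sum of the separable form just treated. I would then check that every term is continuous with respect to this convergence, working $\omega$ by $\omega$. The endpoint terms $\brak{q^\eps_t,\phi_t^{(N)}}$ and $\brak{q_0,\phi^{(N)}_0}$ pass to the limit by $C(0,T;\L^2)$ convergence; the term $\int_0^t\brak{q^\eps_s,\partial_s\phi_s^{(N)}}\,ds$ converges because $q^\eps\in L^2(0,T;\H^1_0)$ pairs with $\partial_s\phi^{(N)}\to\partial_s\phi$ in $L^2(0,T;\H^{-1})$; and the viscous term converges since $\grad q^\eps,\grad W\in L^2(0,T;\L^2)$ while $\grad\phi^{(N)}\to\grad\phi$ in $L^2(0,T;\L^2)$. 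Crucially, because $W_t\in\H^{5/2}\hookrightarrow\H^1_0$ by the standing regularity assumption, the two stochastic contributions are in fact the pathwise pairings $\brak{W_t,\phi_t}$ and $\int_0^t\brak{W_s,\partial_s\phi_s}\,ds$, so their convergence follows from the continuity of $s\mapsto W_s$ in $\H^1_0$ together with the $C(0,T;\L^2)$ and $L^2(0,T;\H^{-1})$ convergence of $\phi^{(N)}$; no stochastic-integral argument is needed at this stage.

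The delicate point, and the one I expect to require the most care, is the convergence of the nonlinear term $\int_0^t\brak{q^\eps_s,u^\eps_s\cdot\grad\phi_s^{(N)}}\,ds$, since here one only controls $\grad\phi^{(N)}\to\grad\phi$ in $L^2(0,T;\L^2)$ and must compensate with integrability of $q^\eps u^\eps$. This is where the viscous structure and \cref{elliptic estimate} enter: from $q^\eps\in C(0,T;\L^2)\cap L^2(0,T;\H^1_0)$ the reconstructed velocity $u^\eps = \op q^\eps$ gains two derivatives, so that in two dimensions $u^\eps\in L^2(0,T;\L^\infty)$ and, by Cauchy--Schwarz in time, the bilinear map $\phi\mapsto\int_0^t\brak{q^\eps_s,u^\eps_s\cdot\grad\phi_s}\,ds$ is continuous on $L^2(0,T;\H^1_0)$, uniformly along the approximating sequence. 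Passing to the limit in every term then yields the asserted identity for all $\phi$ in the stated class. The whole argument parallels the $\eps=0$ case, for which one may follow the template of \cite[Lemma 3]{luongo21inviscid}, the only genuine difference being the additional viscous and $\grad W$ terms, which are linear and cause no extra difficulty.
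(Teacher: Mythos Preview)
The paper does not actually prove this lemma: it is stated immediately after the sentence ``As in the case of \cref{def:solutionqgsto,def:sol2}, we can use an equivalent time-dependent definition for \eqref{NSE}'' and no argument is given, the implicit claim being that the proof follows the same template as the $\eps=0$ equivalence, for which the paper in turn points to \cite[Lemma~3]{luongo21inviscid}. Your proposal is a correct and careful execution of precisely that template, and you correctly identify the one genuinely new feature in the viscous case, namely the need to control the nonlinear term using only $L^2(0,T;\H^1_0)$-convergence of $\phi^{(N)}$, which you handle via the regularity gain $u^\eps=\op q^\eps$.

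One minor technical point worth tightening: in your second step the spectral coefficients $\hat\phi_k(t)=\brak{\phi_t,e_k}$ inherited from $\phi\in H^1(0,T;\H^{-1})$ lie in $H^1(0,T)$ but not, in general, in $C^1([0,T])$, so the truncations $\phi^{(N)}$ are not literally finite sums of the separable form $\psi\varphi$ with $\psi\in C^1$ treated in step one. This is harmless---the product rule $d(\psi X)=\psi' X\,dt+\psi\,dX$ and the integration by parts $\int_0^t\psi\,dM=\psi(t)M_t-\int_0^t\psi' M\,ds$ both extend to absolutely continuous $\psi$, and $H^1(0,T)\subset W^{1,1}(0,T)$---but you should either say so, or insert a further mollification of the coefficients in time.
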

Since we are considering additive noise, no explicit stochastic integral appears in the definition of a solution. This allows to adopt a pathwise approach by focusing on $\eta^\epsilon \coloneqq q^\epsilon - W$, which solves
\begin{equation}\label{eq:EtaEpsilon}
\begin{cases}
	\partial_t \eta^\epsilon + ({u}^\epsilon \cdot \grad) \eta^\epsilon + \gamma \eta^\epsilon 
	= \epsilon^2 \Delta \eta^\epsilon  - ({u}^\epsilon \cdot \grad) W  - \gamma W , \\
	{{u}^\epsilon = \grad^\bot (A+L)^{-1}( \eta^\epsilon + W)} , \\
	\eta^\epsilon_t|_{\partial D}=0.
\end{cases}
\end{equation}
The above is equivalent to the equation for $q^\eps$, as we understand it in the analogous weak sense.
Since $W_t$ is regular enough, it is easy to prove that if $q^\eps$ is a solution of \eqref{NSE} then $\frac{d}{dt}\eta^\eps$ belongs to the space $L^2(0, T; \H^{-1})$ almost surely. Its $L^{2}(0, T; \H^{-1})$-norm however diverges as $\epsilon\rightarrow 0$, unless we prove higher regularity of $\eta$ (cf. computations in \cref{lemma:ViscSolBound}). 
	
\begin{remark}
The viscous approximation we consider is non-physical: it is motivated by mathematical convenience and, as mentioned above, it avoids the confrontation with the (physical) creation of vorticity at the boundary $\partial D$.
Physical viscous approximations satisfying no-slip boundary conditions do not allow to prove inviscid limit results, either in the deterministic \cite{Kato} or the stochastic framework \cite{luongo21inviscid,butori2023large}.
\end{remark}
	

\subsection{Galerkin approximation} 

The following provides an a priori bound on the solution of \eqref{NSE} in $L^p$ norm, independent of $p$ and $\epsilon$. 
	
\begin{lemma} \label{lemma:ViscSolBound}
Let $q_0\in \L^\infty$ and assume $q^\epsilon_t$ to be a weak solution of \eqref{NSE}. Then $q^\epsilon\in L^\infty(0, T; \L^\infty)$ $\PP$-almost surely and, for all $k\in \N$,
\begin{equation*}
    \norm{q^\epsilon}{L_t^\infty\mathbf{L}_x^{2k}} \leq C_1\left(\gamma, T, \norm{q_0}{\mathbf{L}_x^\infty}, \norm{W(\omega)}{L_t^{\infty} \H_x^{5/2}}\right)
\end{equation*}
and 
\begin{equation}\label{est2}
	\norm{q^{\eps}}{L^2_t\H^1_x} \leq \frac{1}{\eps} C_2\left(T, \norm{q_0}{\mathbf{L}_x^\infty}, \norm{W(\omega)}{L_t^{\infty} \H_x^{5/2}}\right) ,
\end{equation}
where constants $C_1, C_2$ are in fact random variables as they depend on the noise sample $\omega\in\Omega$.
\end{lemma}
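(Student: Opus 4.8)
The plan is to exploit the additive structure of the noise by passing to the auxiliary field $\eta^\epsilon = q^\epsilon - W$, which for each fixed $\omega$ solves the deterministic parabolic problem \eqref{eq:EtaEpsilon}; this removes every stochastic integral and reduces both bounds to pathwise energy estimates. All the computations below are formal but can be made rigorous on the Galerkin approximations $\eta^{\epsilon,n}$ (projections onto the span of the first $n$ eigenfunctions of $(A+L)$), for which testing is legitimate and the resulting bounds, being uniform in $n$, pass to the weak solution by lower semicontinuity of the norms under weak convergence. Fixing $k\in\N$ and working layer by layer, I would test the $i$-th component of \eqref{eq:EtaEpsilon} against $|\eta^{\epsilon,i}|^{2k-2}\eta^{\epsilon,i}$. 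Since $u^{\epsilon,i}=\grad^\perp\psi^{\epsilon,i}$ is divergence-free and $\eta^{\epsilon,i}$ vanishes on $\partial D$, the transport term integrates to zero; integrating the viscous term by parts yields $\eps^2(2k-1)\int_D|\eta^{\epsilon,i}|^{2k-2}|\grad\eta^{\epsilon,i}|^2\,dx\ge 0$, and friction contributes $\gamma\norm{\eta^{\epsilon,i}}{\L^{2k}}^{2k}\ge 0$; both have the favourable sign and may be discarded at this stage. Dividing by $\norm{\eta^{\epsilon,i}}{\L^{2k}}^{2k-1}$ leaves the linear differential inequality
\begin{equation*}
	\frac{d}{dt}\norm{\eta^{\epsilon,i}}{\L^{2k}} + \gamma\norm{\eta^{\epsilon,i}}{\L^{2k}} \le \norm{(u^{\epsilon,i}\cdot\grad)W^i}{\L^{2k}} + \gamma\norm{W^i}{\L^{2k}}.
\end{equation*}

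The crucial point is to bound the right-hand side uniformly in $k$. I would estimate $\norm{(u^{\epsilon,i}\cdot\grad)W^i}{\L^{2k}}\le \norm{u^{\epsilon,i}}{\L^{2k}}\norm{\grad W^i}{\L^\infty}$, invoke the two-dimensional Sobolev embedding $H^{3/2}\hookrightarrow L^\infty$ to get $\norm{\grad W^i}{\L^\infty}+\norm{W^i}{\L^{2k}}\le C\norm{W}{\H^{5/2}}$, and — this is the key estimate — control the velocity by $\norm{u^{\epsilon,i}}{\L^{2k}}\le C\norm{q^\epsilon}{\L^{2k}}$ with $C$ \emph{independent of $k$}. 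The latter follows by writing $u^\epsilon=\op\,q^\epsilon$ as convolution against the Biot--Savart kernel of $(A+L)$, whose leading singularity is of order $|x-y|^{-1}$ and hence integrable over the bounded domain $D$: a Schur test then gives $L^p\to L^p$ boundedness with constant $\sup_x\int_D|K(x,y)|\,dy$ uniform over $p\in[1,\infty]$. Setting $M=\norm{W(\omega)}{L^\infty_t\H^{5/2}}$ and using $q^\epsilon=\eta^\epsilon+W$, the inequality becomes $\frac{d}{dt}\norm{\eta^{\epsilon,i}}{\L^{2k}}+(\gamma-CM)\norm{\eta^{\epsilon,i}}{\L^{2k}}\le CM^2+\gamma C M$; Grönwall's lemma, together with $\norm{\eta^{\epsilon,i}(0)}{\L^{2k}}\le |D|^{1/2k}\norm{q_0}{\L^\infty}+CM$, bounds $\sup_{t\le T}\norm{\eta^{\epsilon,i}(t)}{\L^{2k}}$ by a constant depending only on $\gamma,T,\norm{q_0}{\L^\infty},M$ and, crucially, neither on $k$ nor on $\epsilon$. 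Summing over the three layers and adding $\norm{W}{\L^{2k}}\le CM$ gives the stated $\L^{2k}$ bound; letting $k\to\infty$ and using $\norm{f}{\L^{2k}}\to\norm{f}{\L^\infty}$ yields $q^\epsilon\in L^\infty(0,T;\L^\infty)$ with the same constant.

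For the second bound I would repeat the computation in the case $k=1$, i.e. test \eqref{eq:EtaEpsilon} against $\eta^\epsilon$, but now \emph{retain} the viscous term. Integrating over $[0,T]$ gives
\begin{equation*}
	\eps^2\int_0^T\norm{\grad\eta^\epsilon}{\L^2}^2\,dt \le \tfrac12\norm{\eta^\epsilon(0)}{\L^2}^2 + \int_0^T\big(\norm{u^\epsilon}{\L^2}\norm{\grad W}{\L^\infty}+\gamma\norm{W}{\L^2}\big)\norm{\eta^\epsilon}{\L^2}\,dt,
\end{equation*}
whose right-hand side is bounded, uniformly in $\eps$, by a constant depending on $T,\norm{q_0}{\L^\infty},M$ once the $L^\infty(0,T;\L^2)$ bound from the previous step is inserted. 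Dividing by $\eps^2$ gives $\norm{\grad\eta^\epsilon}{L^2_t\L^2}\le \eps^{-1}C$, and since $\norm{\grad W}{L^2_t\L^2}\le\sqrt T\,M$ while $q^\epsilon=\eta^\epsilon+W$ is already bounded in $L^2_t\L^2$, the claimed estimate \eqref{est2} follows.

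The main obstacle I anticipate is precisely the uniform-in-$k$ velocity estimate: the naive route through the elliptic bound of \cref{elliptic estimate} produces a constant growing like $k$, which would spoil the passage $k\to\infty$, so one must argue instead through the integrable singularity of the Biot--Savart kernel and verify that the zeroth-order matrix perturbation $L$ does not destroy the $|x-y|^{-1}$ behaviour of the Green's matrix of $(A+L)$. A secondary technical point is the rigorous justification of testing against the non-smooth function $|\eta^{\epsilon,i}|^{2k-2}\eta^{\epsilon,i}$, which is why the estimates are best carried out first on the Galerkin scheme and then transferred to the weak solution.
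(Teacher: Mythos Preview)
Your overall strategy---pass to $\eta^\epsilon=q^\epsilon-W$, test against $|\eta^{\epsilon,i}|^{2k-2}\eta^{\epsilon,i}$, discard the nonnegative transport/viscous/friction terms, and close by Gr\"onwall---is exactly the paper's. The second estimate (retain the viscous term in the $k=1$ identity, integrate in time, divide by $\eps^2$) is also identical.

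Where you diverge is in the uniform-in-$k$ control of $\norm{u^\epsilon}{\L^{2k}}$. You argue via a Schur test on the Biot--Savart kernel of $(A+L)$: since $\grad^\perp G(x,y)$ has the integrable singularity $|x-y|^{-1}$ in two dimensions, $\op$ is bounded $L^p\to L^p$ with a constant independent of $p$, and the Gr\"onwall closes in one step for every $k$. The paper instead avoids any kernel estimate and bootstraps using only \cref{elliptic estimate} and Sobolev embeddings: first close the $\L^2$ estimate ($k=1$) using $\norm{u^\epsilon}{\L^2}\lesssim\norm{u^\epsilon}{\H^1}\lesssim\norm{q^\epsilon}{\L^2}$; feed this into the $\L^4$ inequality ($k=2$); then, since $\norm{u^\epsilon}{\L^\infty}\lesssim\norm{u^\epsilon}{\W^{1,4}}\lesssim\norm{q^\epsilon}{\L^4}$ with the elliptic constant frozen at $p=4$, the velocity is bounded in $\L^\infty$ and hence in every $\L^{2k}$ uniformly, closing all remaining $k$ simultaneously. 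Your route is shorter but requires checking the Green's-matrix asymptotics for the perturbed operator $A+L$ on a bounded Dirichlet domain (which you correctly flag as the main obstacle); the paper's route is slightly longer but stays entirely within the elliptic $L^p$ regularity already recorded in \cref{elliptic estimate}, so nothing external needs to be verified.
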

	
\begin{remark}\label{rmk:aboutlemma:ViscSolBound}
We will actually prove something more, namely that the bound still holds with the $\H^{2+\delta}$ norm of $W_t$ for each $\delta>0$. Since we will assume more regularity on the noise term in later arguments, we state a sub-optimal result.

Let us also remark that the addition of the term $-\eps^2\Delta W_tdt$ in \eqref{NSE} is essential for the latter estimates in the case $k\geq 1$. Without that additional term and the subsequent cancellation in the dynamics of $\eta^\epsilon$, the estimate would require better regularity of the noise $W$.
\end{remark}
	
\begin{proof}
We take a pathwise approach and assume that $\omega\in\Omega$ belongs to a set of full probability such that $W(\omega, t)\in L^{\infty}(0,T; \H^{5/2})$.
Let $\eta^\eps_t = q^\eps_t - W_t$ solve \eqref{eq:EtaEpsilon}. Since $\eta^\eps\in L^2(0,T; \H^1)$, it is possible to use $\eta^{\eps,i}|\eta^{\eps,i}|^{2k-2}$, $i=1,2, 3$ as test functions.
Considering the equation term by term,
\begin{multline*}
	\brak{\frac{d}{dt}\eta^{\epsilon,i}_t,\eta^{\epsilon,i}|\eta^{\epsilon,i}|^{2k-2}} 
	= \brak{|\eta^{\epsilon,i}_t|^{k-1}{\frac{d}{dt}\eta^{\epsilon,i}_t,|\eta^{\epsilon,i}_t|^{k} } } \\
	= \frac{1}{2k}\frac{d}{dt} \norm{\eta^{\epsilon,i}}{L^{2k}}^{2k} 
	=\norm{\eta^{\epsilon,i}}{L^{2k}}^{2k-1}\frac{d}{dt}\norm{\eta^{\epsilon,i}}{L^{2k}} ;
\end{multline*} 
also, by the incompressibility of $u^{\eps}$
\begin{equation*}
	\brak{\eta^{\epsilon,i}_t, u^{\epsilon,i}_t\cdot \grad[(\eta^{\epsilon,i}|\eta^{\epsilon,i}|^{2k-2})]}
	= \frac{2k-1}{2k}\brak{u^{\epsilon,i}_t, \grad[(\eta^{\epsilon,i}_t)^{2k}]} 
	= 0 ,
\end{equation*}
while the dissipative term yields
\begin{equation*}
	\brak{\grad\eta^{\epsilon,i}_t, \grad[(\eta^{\epsilon,i}|\eta^{\epsilon,i}|^{2k-2})]}
	= (2k-1)\norm{(\eta^{\epsilon,i}_t)^{k-1}\grad\eta^{\epsilon,i}_t}{L^{2}}^2 .
\end{equation*}
By H\"older inequality with $p=2k$ and $p'=\frac{2k}{2k-1}$ on the terms involving $W$ and $|\eta^{\epsilon,i}|^{2k-1}$, and combining the contributions of all layers,
\begin{multline}\label{eq:apriori1}
	\norm{\eta^{\epsilon}_t}{\L^{2k}}^{2k-1} \frac{d}{dt} \norm{\eta^{\epsilon}_t}{\L^{2k}} + \eps^2(2k-1)\norm{(\eta^{\epsilon}_t)^{k-1}\grad\eta^{\epsilon}_t}{\L^2}^2 \\ 
	\le 
	-\gamma \norm{\eta^{\epsilon}_t}{\L^{2k}}^{2k} + \norm{\eta^{\epsilon}_t}{\L^{2k}}^{2k-1}\Big( \norm{(u^{\epsilon}_t\cdot \grad) W_t}{\L^{2k}} + \gamma \norm{W_t}{\L^{2k}}\Big) .
\end{multline}
Neglecting positive terms on the left-hand side, the latter simplifies to 
\begin{equation*}
	\frac{d}{dt} \norm{\eta^{\epsilon}_t}{\L^{2k}} 
	\leq  
	-\gamma \norm{\eta^{\epsilon}_t}{\L^{2k}} + \norm{u_t^{\epsilon}}{\L^{2k}} \norm{\grad W_t}{\L^\infty} + \gamma \norm{W_t}{\L^{2k}} .
\end{equation*}
	In order to close the estimate, we only need to treat carefully the nonlinear term involving $\norm{u^{\epsilon}}{\L^{2k}}$, applying the regularizing property of the elliptic problem \ref{QGBVP} stated in \cref{elliptic estimate}. 
	Since $u^{\epsilon} = \op (\eta^{\epsilon} + W)$,
	there are two cases: using Sobolev embeddings one gets, for $k = 1,2$, 
\begin{equation*}
	\|u^{\epsilon}\|_{\L^k} \leq c\|u^{\epsilon}\|_{\H^1} \leq C( \|\eta^{\epsilon}\|_{\L^2}+ \|W_t\|_{\L^2}) ,
\end{equation*}
while for $k>2$
\begin{equation*}
	\|u^{\epsilon}\|_{\L^\infty} \leq c\|u^{\epsilon}\|_{\W^{1,4}} \leq C (\|\eta^{\epsilon}\|_{\L^4} + \|W_t\|_{\L^4}) ,
\end{equation*}
where the constants are independent of $\eps$ and $k$. 
	
We conclude by applying Gr\"onwall's Lemma. 
Consider first the case $k=1$: we deduce a uniform bound for the $\L^2$ norm of $\eta^\eps$ of the form 
\begin{equation*}
	\sup_{[0, T]}\norm{\eta^{\epsilon}_t}{\L^2} 
	\leq 
	C_1\left(\gamma, T, \norm{q_0}{\mathbf{L}_x^\infty}, \norm{W(\omega)}{L_t^{\infty} \H_x^{5/2}}\right). 
\end{equation*}
We then integrate \eqref{eq:apriori1} in time, getting rid of unnecessary terms, and obtain  
\begin{align*}
	{\epsilon}^2  \int_0^t \norm{\grad \eta^{\epsilon}}{\L^2}^2 
	\leq 
	\frac12 & \norm{\eta_0}{\L^2}^2 + \int_0^t\norm{\eta^{\epsilon}_s}{\L^2} \Big( \norm{(u_s^{\epsilon} \cdot \grad) W_s}{\L^2} + \gamma \norm{W_s}{\L^2} \Big) ds, 
\end{align*}
which implies the second statement.
Taking $k=2$, we use the bound we just proved to obtain a similar one for the $\L^4$ norm, again by Gr\"onwall's Lemma. 
Now, using the $\L^4$ estimate and knowing that $u^{\eps}$ only depends on $\norm{\eta^{{\epsilon}}}{L_t^\infty\L_x^4}$, we can estimate all other $2k$-norms uniformly in $k$. In conclusion we get, for all $k\ge 1$,
\begin{equation*}
	\sup_{[0, T]} \norm{\eta^{\epsilon}_t}{\L^{2k}} \leq C_1\left(\gamma, T, \norm{q_0}{\mathbf{L}_x^\infty}, \norm{W(\omega)}{L_t^{\infty} \H_x^{5/2}}\right) ,
\end{equation*}
with the constant $C_1$ independent of $k$.
This implies that $\eta^{\epsilon} \in L^\infty (0,T;\L^p) \cap L^2(0,T;\H^1)$ uniformly for all $p$, hence
\begin{equation*}
	\eta^{\epsilon} \in L^\infty (0,T;\L^\infty) \cap L^2(0,T;\H^1) .\qedhere
\end{equation*} 
\end{proof}
	
\begin{remark}\label{remark: a priori estimates}
The family of bounds in $L^\infty(0,T; \L^{2k})$ is stable under the weak$\star$ convergence of $\eta^{\eps}\rightharpoonup \eta$ in $L^\infty(0,T; \L^{2k})$. Indeed, the weak$\star$ convergence implies a uniform bound on the $L^\infty(0,T; \L^2)$ norms, which in turn gives a uniform bound in $L^\infty(0,T; \L^4)$, by what was showed in the lemma. As we have proved, this gives a uniform bound for all other norms,  meaning that, up to passing to a subsequence which converges weakly$\star$ in all $L^\infty(0,T; \L^{2k})$, the bounds hold for any limit function $\eta$ due to the lower-semicontinuity of the $\L^{2k}$-norms.
\end{remark}
	
\begin{proposition}\label{prop:existenceNSE}
Let $\epsilon>0$, $q_0\in \L^2$ and $W \in C([0,T];\H^{5/2})$ $\PP$-almost surely Then the problem \eqref{eq:EtaEpsilon} admits a unique solution $\eta^\epsilon \coloneqq q^\epsilon - W$. In particular, 
\begin{equation*}
	\eta^\epsilon \in C(0,T;\mathbf \L^2) \cap L^2(0,T; \H^1)\cap W^{1,2}(0, T; \H^{-1}) .
\end{equation*}
\end{proposition}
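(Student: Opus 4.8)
The plan is to construct the solution of \eqref{eq:EtaEpsilon} for fixed $\eps>0$ by a Galerkin scheme, exploiting the a priori bounds already obtained in \cref{lemma:ViscSolBound}, and to prove uniqueness by a direct energy estimate on the difference of two solutions. Let $\{\rho_k\}$ be the orthonormal basis of $\L^2$ made of eigenvectors of $(A+L)$ and let $P_n$ denote the orthogonal projection onto the span of $\rho_0,\dots,\rho_n$. I would look for $\eta_n(t)=\sum_{k\le n}a_k(t)\rho_k$ solving the system obtained by testing \eqref{eq:EtaEpsilon} against each $\rho_k$, $k\le n$, with velocity reconstructed as $u_n=\op(\eta_n+W)$. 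Since $(A+L)^{-1}$ and $\op$ are bounded smoothing operators, the right-hand side of the resulting ODE for the coefficients $(a_k)_{k\le n}$ is locally Lipschitz (the transport term being quadratic in the $a_k$), so the Cauchy--Lipschitz theorem yields a unique local solution; global existence on $[0,T]$ then follows from the uniform-in-$n$ energy bound described next.

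Next I would derive the uniform estimates. Testing the Galerkin system with $\eta_n$ and using the incompressibility of $u_n$ to annihilate the transport term (exactly as in the computation preceding \eqref{eq:apriori1} with $k=1$), together with the coercivity $\eps^2\norm{\grad\eta_n}{\L^2}^2$ of the viscous term, yields
\[
	\sup_{[0,T]}\norm{\eta_n}{\L^2}^2 + \eps^2\int_0^T\norm{\grad\eta_n}{\L^2}^2\,ds \le C ,
\]
uniformly in $n$, where $C$ depends on $\eps,\gamma,T,\norm{q_0}{\L^2}$ and $\norm{W(\omega)}{L^\infty_t\H^{5/2}_x}$, the forcing $-(u_n\cdot\grad)W-\gamma W$ being controlled via $W\in C(0,T;\H^{5/2})$ and the boundedness of $\op$. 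To gain compactness I would then bound $\partial_t\eta_n$ in $L^2(0,T;\H^{-1})$: writing the nonlinearity as $\grad\cdot(u_n\eta_n)$ by incompressibility, each term is estimated in $\H^{-1}$ uniformly in $n$, using elliptic regularity (\cref{elliptic estimate}) to get $u_n\in\L^\infty$ from $\eta_n\in\H^1$, so that $u_n\eta_n\in L^2(0,T;\L^2)$. This is the step where one must be slightly careful about the integrability of the product.

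With the bounds in $L^\infty(0,T;\L^2)\cap L^2(0,T;\H^1)$ and on $\partial_t\eta_n$ in $L^2(0,T;\H^{-1})$, the Aubin--Lions--Simon lemma (using $\H^1\stackrel{c}{\hookrightarrow}\L^2\hookrightarrow\H^{-1}$) provides a subsequence converging strongly in $L^2(0,T;\L^2)$, weakly in $L^2(0,T;\H^1)$ and weakly-$\star$ in $L^\infty(0,T;\L^2)$ to a limit $\eta^\eps$. Passing to the limit in the weak formulation, the linear terms converge by weak convergence; the delicate term is the nonlinearity, where the strong $L^2(0,T;\L^2)$ convergence of $\eta_n$ (hence, through the bounded smoothing operator $\op$, of $u_n$) allows one to pass to the limit in $\brak{\eta_n,u_n\cdot\grad\phi}$. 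Membership $\eta^\eps\in C(0,T;\L^2)$ then follows from $\eta^\eps\in L^2(0,T;\H^1)$ and $\partial_t\eta^\eps\in L^2(0,T;\H^{-1})$ by the classical Lions--Magenes interpolation result, which simultaneously gives $\eta^\eps\in W^{1,2}(0,T;\H^{-1})$.

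For uniqueness, let $\eta_1,\eta_2$ be two solutions with the same data, set $\xi=\eta_1-\eta_2$ and $v=\op\xi=u_1-u_2$. Then $\xi$ solves $\partial_t\xi+(u_1\cdot\grad)\xi+(v\cdot\grad)\eta_2+\gamma\xi=\eps^2\Delta\xi-(v\cdot\grad)W$. Testing with $\xi$, the term $\brak{(u_1\cdot\grad)\xi,\xi}$ vanishes by incompressibility, and it remains to control $\brak{(v\cdot\grad)\eta_2,\xi}$ and $\brak{(v\cdot\grad)W,\xi}$. Using that $\op$ gains one derivative, so that $\norm{v}{\L^4}\lesssim\norm{v}{\H^1}\lesssim\norm{\xi}{\L^2}$, together with the two-dimensional Ladyzhenskaya inequality $\norm{\xi}{\L^4}\lesssim\norm{\xi}{\L^2}^{1/2}\norm{\grad\xi}{\L^2}^{1/2}$, I would bound the critical term by $\tfrac{\eps^2}{2}\norm{\grad\xi}{\L^2}^2+C_\eps\norm{\grad\eta_2}{\L^2}^{4/3}\norm{\xi}{\L^2}^2$ after Young's inequality, absorbing the gradient into the viscous term. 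Since $\eta_2\in L^2(0,T;\H^1)$, the prefactor $\norm{\grad\eta_2}{\L^2}^{4/3}$ is integrable in time, so Gr\"onwall's lemma forces $\xi\equiv0$. The main obstacle throughout is the nonlinear transport term: both the $\H^{-1}$ bound on the time derivative needed for compactness and the uniqueness estimate rely on using the smoothing of $\op$ to trade regularity between velocity and vorticity.
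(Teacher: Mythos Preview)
Your proposal is correct and follows essentially the same strategy as the paper: Galerkin approximation on eigenfunctions of $(A+L)$, uniform energy bounds in $L^\infty(0,T;\L^2)\cap L^2(0,T;\H^1)$, an $L^2(0,T;\H^{-1})$ bound on $\partial_t\eta_n$, Aubin--Lions--Simon compactness, passage to the limit in the weak formulation, and uniqueness via an energy estimate on the difference (which the paper simply asserts and omits). The only minor deviations are tactical---the paper extracts compactness directly in $C(0,T;\L^2)$ rather than first in $L^2(0,T;\L^2)$ plus Lions--Magenes, and it estimates the nonlinear contribution to $\partial_t\eta_n$ via $\H^1\hookrightarrow\L^4$ rather than writing it in divergence form---but these do not amount to a genuinely different route.
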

	
\begin{proof}
Consider a complete linearly independent system $\{e_k\}_{k\ge 0}$ in $\H^1(\Omega)$, made of eigenfunctions of $(A+L)$, and $\H_N = \operatorname{Span}(e_1, \dots,e_N)$.
Let $\eta^{\epsilon,N} (x,t) = \sum_{k = 1}^N c_k(t) \e_k(x)$, with coefficients  $c_k$ defined in such a way that $\eta^{\eps, N}$ solves the equation on $\H_N$, meaning that for every $k=1\ldots N$
\begin{multline*}
	\brak{ \partial_t \eta^{\eps, N}, e_k } 
	- \brak{ \eta^{\epsilon, N}, u^{\epsilon,N} \grad e_k }
	+ \gamma \brak{ \eta^{\epsilon,N}, e_k } \\
	=  
	- \epsilon^2 \brak{ \grad \eta^{\epsilon,N}, \grad e_k }
	- \brak{ (u^{\epsilon,N} \cdot \grad) W, e_k }
	- \gamma \brak{ W, e_k } .
\end{multline*}
Local well-posedness of the system above follows from classical results about stochastic differential equations with locally Lipshitz coefficients, cf. for example \cite{KaratzasSchreve,Skorokhod}. 
As for global well-posedness and the possibility to pass to the limit in $N$, we need some estimates on the solutions independent from $N$ and from time $t$. 
We give now an energy estimate: by testing $\eta^{\eps, N}$ against itself we obtain, arguing as in \cref{lemma:ViscSolBound}, 
\begin{multline*}
	\frac12 \frac{d}{dt} \norm{\eta^{\epsilon,N}_t}{\L^{2}}^2 + \eps^2 \norm{\grad\eta^{\epsilon,N}_t}{\L^2}^2 \\
	\leq 
	- \gamma \norm{\eta^{\epsilon,N}_t}{\L^{2}}^{2} 
	+ \Big( \norm{(u^{\epsilon,N}_t \cdot \grad) W_t}{\L^{2}} + \gamma \norm{W_t}{\L^{2}}\Big) \norm{\eta^{\epsilon,N}_t}{\L^{2}},
\end{multline*}
from which we obtain
\begin{multline}\label{energy est}
	\sup_{[0, T]}\norm{\eta^{\epsilon,N}_t}{\L^2} 
	\leq 
	C_1\left(\gamma, T, \norm{q^N_0}{\mathbf{L}_x^2}, \norm{W(\omega)}{L_t^{\infty} \H_x^{5/2}}\right) \\
	\leq 
	C_1\left(\gamma, T, \norm{q_0}{\mathbf{L}_x^2}, \norm{W(\omega)}{L_t^{\infty} \H_x^{5/2}}\right) .
\end{multline}
and consequently 
\begin{equation}\label{eq:est2b}
	\norm{q^{\eps,N}}{L^2_t\H^1_x} \leq \frac{1}{\eps} C_2\left(T, \norm{q_0}{\mathbf{L}_x^2}, \norm{W(\omega)}{L_t^{\infty} \H_x^{5/2}}\right).
\end{equation}
We can also say something about the regularity in time of $\eta^{\epsilon, N}$. 
Indeed, testing against a function $\phi \in \H^{1}$ yields
\begin{multline*}
	\brak{ \frac{d}{dt}\eta^{\epsilon,N} , \phi }
	+ \brak{ (u^{\epsilon, N} \cdot \grad) \eta^{\epsilon,N}, \phi } 
	+ \gamma \brak{ \eta^{\epsilon,N}, \phi } 
	= \\ 
	- \epsilon^2 \brak{ \grad \eta^{\epsilon,N}, \grad \phi }   
	- \brak{ (u^{\epsilon, N}\cdot \grad) W, \phi }
	- \gamma \brak{ W, \phi } .
\end{multline*}
By H\"older inequality, Sobolev embedding $H^1\hookrightarrow L^4$, \cref{elliptic estimate} we obtain
\begin{align*}
	&\int_0^T \Big| \Big\langle \frac{d}{dt} \eta^{\epsilon,N}_t , \phi \Big\rangle \Big|^2 dt\\ 
	&\qquad \leq 
	\int_0^T \left( \big\| u_t^{\epsilon, N} \big\|^2_{\L^2} \| \phi \|^2_{\H^1} \big\| \eta_t^{\epsilon,N} \big\|^2_{\L^4} + \gamma\big\| \eta_t^{\eps,N} \big\|_{\L^2} + \|\phi\|^2_{\H^1} \|\grad \eta_t^{\eps, N}\|_{\L^2} \right) dt \\ 
	& \qquad \quad
	+ \int_0^T \left( \|\phi\|^2_{\L^2} \|W_t\|^2_{\L^2} + \|u_t^{\epsilon, N}\|^2_{\L^2}\|\phi\|_{\H^1} \|W_t\|^2_{\L^4} \right) dt \\ 
	& \qquad \leq
	C\left( \gamma + \norm{W}{L^\infty_t\L_x^2} + \norm{\eta^{\epsilon,N}}{L^\infty_t\L_x^2} \right)^2 \left( \norm{W}{L^\infty_t\L_x^4} + \norm{\eta^{\epsilon,N}}{L^\infty_t\H_x^1} \right)^2 \norm{\phi}{H_x^1}^2 \\ \nonumber
	& \qquad \quad + \eps^2M^2 \norm{\phi}{H_x^1}^2,
\end{align*}
therefore, by \cref{energy est} and $\|\grad \eta^{\eps, N}\|_{L^2_t\L^2_x}\le M/\eps$ (following from \eqref{eq:est2b}),
\begin{equation}\label{eq: derivative bound}
	\int_0^T \Big| \Big\langle \frac{d}{dt} \eta^{\epsilon,N}_t , \phi \Big\rangle \Big|^2 dt
	\leq \frac1\epsilon C(\gamma, \|q_0\|_L^2, \omega)\|\phi\|_{H_x^1}^2.
\end{equation}
This shows that $\frac{d}{dt}\eta^{\epsilon,N}$ defines an operator in $ L^2(0,T;\H^{-1})$ whose norm is independent of $N$ (but diverges for $\eps\rightarrow 0$) and, together with \eqref{est2} and Aubin-Lions-Simon lemma (cf. \cite{:simon}), we obtain that the sequence $\{\eta^{\epsilon,N}\}_{N\geq0}$ is precompact in $C(0,T;\mathbf \L^2)$.
We can then extract a subsequence converging in $C(0,T;\mathbf \L^2)$ to some limit point $\eta^{\eps}$ satisfying \eqref{eq:EtaEpsilon} and assume weak convergence in $L^2(0, T; \H^1)$ and weak$\star$ convergence in $L^\infty(0, T; \L^2)$ . Considering the weak formulation for $\eta^{\eps, N}$,
\begin{multline*}
	\brak{ \eta_t^{\epsilon,N} , \phi } - \brak{ q_0^{N} , \phi } 
	+ \int_0^t \big[ \brak{ (u_s^{\epsilon, N} \cdot{} \nabla) \eta_s^{\epsilon,N}, \phi } 
	+ \gamma \brak{ \eta_s^{\epsilon,N}, \phi } \big] d s\\ 
	= -\int_0^t \big[  \epsilon^2 \brak{ \nabla \eta_s^{\epsilon,N}, \nabla \phi } 
	- \brak{ (u_s^{\epsilon, N}\cdot \nabla) W_s, \phi }
	- \gamma \brak{ W_s, \phi } \big] d s.
\end{multline*}
we are able to pass the equation to the limit $N\rightarrow +\infty$: weak convergence in $L^2(0,T; \H^1)$ yields convergence for the linear terms on the right hand side and convergence in $C(0,T, \L^2)$ assures the convergence of $\brak{ \eta_t^{\epsilon,N} , \phi }$, while the non-linear term converges due to the additional strong convergence of $u^{\eps, N}$ in $C(0, T; \H^{1})$.
We deduce that $\eta_t$ solves \eqref{NSE} and that
\begin{equation*}
	\eta^{\eps}\in C(0,T;\L^2)\cap L^{2}(0,T; \H^1) .
\end{equation*}
Finally, uniqueness follows by very similar arguments based on energy estimates that we omit. Let us notice that uniqueness implies the possibility of repeating existence proof without using subsequences. This gives for free the progressive measurability of $\eta$ as a process in $\L^2$, since any finite-dimensional pathwise approximation is measurable and converges (in the appropriate sense) to the limit, which is a continuous process in $\L^2$, without passing to $\omega$-dependent subsequences.
\end{proof}
	

\subsection{Vanishing viscosity}
	
Fix $q_0\in \L^\infty$.
The bound of $\frac{d}{dt}\eta^\eps$ in $L^2(0,T; \H^{-1})$ can be improved and made independent of $\epsilon$: it is sufficient to repeat the computations in \eqref{eq: derivative bound} estimating directly the $\L^4$ norm of $\eta^\eps$, thanks to \cref{lemma:ViscSolBound}, instead of using the Sobolev embedding $\H^1\hookrightarrow \L^4$. We are then ready to prove an existence result.
	
\begin{proposition}\label{prop:existence}
Let $\eta^\epsilon$ be a solution of \eqref{eq:EtaEpsilon}. 
For every sequence $\eps_n\rightarrow 0$ there exist a subsequence $\epsilon_{k_n}$ and a function $\eta$ such that $\eta^{\eps_{k_n}} \rightarrow \eta$ strongly in $C(0, T;\H^{-\delta})$ and weakly$\star$ in $L^\infty(0, T; \L^\infty)$. Moreover, $q_t = \eta_t + W_t$ is a solution of 
\begin{equation*}
\begin{cases}
	dq_t + ([U_t\cdot \grad ]q_t +\gamma q_t) dt = dW_t , \\
	U = \grad^\bot (A+L)^{-1}q , \\
	\grad \cdot U = 0
\end{cases}
\end{equation*}
in the sense of \cref{def:solutionqgsto} and $q_t \in C_w(0, T; \L^\infty).$
\end{proposition}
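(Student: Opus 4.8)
The plan is a vanishing-viscosity compactness argument carried out pathwise, for $\omega$ in the full-measure set on which $W(\omega)\in C(0,T;\H^{5/2})$. The two a priori bounds it requires are already available: by \cref{lemma:ViscSolBound} the family $\{\eta^\eps\}$ is bounded in $L^\infty(0,T;\L^\infty)$, hence in $L^\infty(0,T;\L^2)$, uniformly in $\eps$; and, as observed just before the statement, refining the computation \eqref{eq: derivative bound} with the direct $\L^4$ bound of \cref{lemma:ViscSolBound} yields a bound for $\tfrac{d}{dt}\eta^\eps$ in $L^2(0,T;\H^{-1})$ now independent of $\eps$.

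Fixing $\delta\in(0,\tfrac12]$, the embeddings $\L^2\stackrel{c}{\hookrightarrow}\H^{-\delta}\hookrightarrow\H^{-1}$ together with these two bounds let me apply the Aubin--Lions--Simon lemma to obtain relative compactness of $\{\eta^\eps\}$ in $C(0,T;\H^{-\delta})$; along a subsequence $\eps_{k_n}$ of the given sequence I thus get $\eta^{\eps_{k_n}}\to\eta$ strongly there. A further extraction and Banach--Alaoglu give $\eta^{\eps_{k_n}}\weakstarto\eta$ weakly$\star$ in $L^\infty(0,T;\L^\infty)$, the two limits being identified by testing against smooth functions, and by lower semicontinuity (cf. \cref{remark: a priori estimates}) $\eta\in L^\infty(0,T;\L^\infty)$, so that $q=\eta+W\in L^\infty(0,T;\L^\infty)$. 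Since $\op:\H^{-\delta}\to\H^{1-\delta}$ is continuous by elliptic regularity and $\H^{1-\delta}\hookrightarrow\L^4$ for $\delta\le\tfrac12$, the strong $\H^{-\delta}$ convergence of $\eta^{\eps_{k_n}}$ upgrades to strong convergence $u^{\eps_{k_n}}\to u=\op(\eta+W)$ in $C(0,T;\L^4)$.

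Next I would pass to the limit term by term in the weak formulation of \eqref{NSE}, for a fixed test function $\phi\in\H^1_0\cap\H^2$. The brackets $\brak{q^\eps_t,\phi}$ converge pointwise in $t$ by the strong $\H^{-\delta}$ convergence, the damping integral by dominated convergence, and the noise term is $\eps$-independent. The viscous contribution vanishes: by Cauchy--Schwarz in time and \eqref{est2}, $\big|\eps^2\int_0^t\brak{\grad W_s-\grad q^\eps_s,\grad\phi}\,ds\big|\lesssim \eps^2\big(\norm{q^\eps}{L^2_t\H^1_x}+\norm{\grad W}{L^2_t\L^2_x}\big)\lesssim \eps\to0$. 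The crux is the nonlinear term $\int_0^t\brak{q^\eps_s,u^\eps_s\cdot\grad\phi}\,ds$, where only weak$\star$ convergence of $q^\eps$ is available: here I exploit the weak--strong structure. Since $u^{\eps_{k_n}}\cdot\grad\phi\to u\cdot\grad\phi$ strongly in $L^1((0,T)\times D)$ (product of the strong $\L^4$ limit of $u^{\eps_{k_n}}$ with $\grad\phi\in\L^4$) while $q^{\eps_{k_n}}\weakstarto q$ in $L^\infty$, the pairing converges, and the limiting velocity is exactly $\op q$ by continuity of $\op$. Hence $q$ satisfies \eqref{eq:sol1}.

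Finally, the right-hand side of \eqref{eq:sol1} is continuous in $t$, so $t\mapsto\brak{q_t,\phi}$ is continuous for every $\phi\in\H^1_0\cap\H^2$; since such $\phi$ are dense in $\L^1$ and $\sup_t\norm{q_t}{\L^\infty}<\infty$, a routine density argument upgrades this to $q\in C_w(0,T;\L^\infty)$. The genuine obstacle, beyond the weak--strong passage to the limit in the nonlinearity, is measurability: the subsequence $\eps_{k_n}$ a priori depends on $\omega$, so the pathwise construction only produces a solution for a.e.\ $\omega$. This is cured once pathwise uniqueness (\cref{prop:uniqueness}) is in hand, which forces the whole family $\{\eta^\eps\}$ to converge and lets the limit inherit progressive measurability from the approximations, exactly as in the closing argument of \cref{prop:existenceNSE}.
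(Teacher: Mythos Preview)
Your proof is correct and follows essentially the same route as the paper: uniform $L^\infty_{t,x}$ bounds from \cref{lemma:ViscSolBound}, the $\eps$-independent $L^2_t\H^{-1}_x$ bound on $\partial_t\eta^\eps$, Aubin--Lions--Simon compactness in $C(0,T;\H^{-\delta})$, and a weak--strong passage to the limit in the nonlinearity. Your treatment of the nonlinear term is in fact more explicit than the paper's (which simply refers back to the argument of \cref{prop:existenceNSE}), and your derivation of $q\in C_w(0,T;\L^\infty)$ via continuity of the right-hand side of \eqref{eq:sol1} plus density is a valid alternative to the paper's direct appeal to \cite[Ch.~3, Lemma~1.2]{temam}; the measurability remark you close with matches the paper's own observation.
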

	
\begin{proof}
By \cref{lemma:ViscSolBound}, the family $\{\eta^\epsilon\}_{\epsilon\geq0}$ is bounded in $L^\infty(0,T; \L^2)$, uniformly in $\epsilon$, and since the bound in $L^2(0,T; \H^{-1})$ is now uniform in $\epsilon$ as well, by Aubin-Lions-Simon lemma $\{\eta^\epsilon\}_{\epsilon\geq0}$ is compactly embedded in  $C(0,T;\H^{-\delta})$. 
Therefore, for every sequence $\epsilon_n \rightarrow 0$ there exists a subsequence $\epsilon_{k_n}$ such that $\eta^{\epsilon_{k_n}}$ converges strongly to some $\eta$ in $C(0,T;\H^{-\delta})$, and weakly$\star$ in $L^\infty(0, T; \L^\infty)$. 
With these convergences, we are able to pass to the limit in the weak formulation of \eqref{eq:EtaEpsilon} exactly as in the proof of \cref{prop:existenceNSE} and obtain that $\eta$ solves 
\begin{multline*}
	\brak{ \eta_t , \phi }
	+ \int_0^t \big[ \brak{ (u_s \cdot \grad) \eta_s, \phi } 
	+ \gamma \brak{ \eta_s, \phi } \big] d s
	\\ =\brak{ q_0 , \phi } 
	+ \int_0^t  
	[\brak{ (u_s \cdot \grad) W_s, \phi }
	+ \gamma \brak{ W_s, \phi } \big] d s.
\end{multline*}
Now, since $\eta\in C(0, T; \H^{-\delta})\cap L^\infty(0, T; \L^\infty)$, by \cite[Ch. 3, Lemma 1.2]{temam} it holds that $\eta\in C_w(0, T; \L^\infty)$. Clearly, the same also holds for $q=\eta + W$ thanks to the regularity of $W$. 
\end{proof}
	
\begin{remark}
We have not proved measurability of the process $\omega \mapsto q_t(\omega)$. This will follow, as in the case of $q^\eps$, by the uniqueness result we prove in the following section, so we postpone the issue. However, by standard theory of Banach spaces the measurability of the process $q_t$ can be proved also without uniqueness of the solutions, arguing as in \cite[Section 3.1]{Bessaih1999}.
\end{remark}
	

\subsection{Uniqueness}

The following establishes the uniqueness statement in \cref{thm:main}.
	
\begin{proposition}\label{prop:uniqueness}
Given $q_0\in L^{\infty}$ and a stochastic basis $(\Omega, \F, \{\F_t\}_t, \PP, \{W_t\}_t)$, there is at most one pathwise solution $q$ to problem \eqref{eq:qgsto} in the space $L^{\infty}(0, T; \L^{\infty})$.
\end{proposition}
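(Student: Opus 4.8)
The plan is to prove pathwise uniqueness by adapting Yudovich's classical argument to the multi-layer setting, working with the difference of two putative solutions. Suppose $q^1$ and $q^2$ are two solutions in $L^\infty(0,T;\L^\infty)$ driven by the same noise, set $r = q^1 - q^2$ and let $v = u^1 - u^2 = \op r$ be the corresponding velocity difference. Since the additive noise cancels, $r$ satisfies a deterministic-looking transport equation (pathwise, for almost every $\omega$):
\begin{equation*}
	\partial_t r + (u^1\cdot\grad) r + (v\cdot\grad) q^2 + \gamma r = 0, \qquad r_0 = 0.
\end{equation*}
The key observation is that both solutions are genuinely bounded, so $v$ has the critical regularity needed to close a logarithmic estimate. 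First I would justify that $r$ has enough regularity in time to be tested against itself: because $q^1, q^2 \in C_w(0,T;\L^\infty)$ and the difference of the noise terms vanishes, the quantity $\frac{d}{dt} r$ lives in a suitable negative Sobolev space and the pairing $\brak{\partial_t r, r}$ makes sense, yielding $\frac{1}{2}\frac{d}{dt}\norm{r_t}{\L^2}^2$.

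The central computation is the energy estimate in $\L^2$. Testing the equation against $r$, the term $\brak{(u^1\cdot\grad)r, r}$ vanishes by incompressibility of $u^1$, the friction term contributes $-\gamma\norm{r}{\L^2}^2 \le 0$, and we are left to control the coupling term $\brak{(v\cdot\grad)q^2, r}$. Integrating by parts (again using $\grad\cdot v = 0$), this is $-\brak{q^2, (v\cdot\grad)r}$, but the cleaner route is to bound $\abs{\brak{(v\cdot\grad)q^2, r}}$ directly. The standard Yudovich trick is to estimate, for any $p$ large,
\begin{equation*}
	\abs{\brak{(v\cdot\grad)q^2, r}} \le \norm{v}{\L^{2p/(p-1)}} \norm{\grad q^2}{\L^p} \norm{r}{\L^{2p/(p-1)}},
\end{equation*}
then use the elliptic estimate of \cref{elliptic estimate}, which gives $\norm{\grad^2\psi^i}{\L^p} \le Cp\norm{q^i}{\L^\infty}$, hence a linear-in-$p$ bound on $\norm{\grad q^2}{\L^p}$ is replaced by a linear-in-$p$ bound coming from controlling $v = \op r$ in $\W^{1,p}$ via $\norm{v}{\W^{1,p}}\le Cp\norm{r}{\L^\infty}$. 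Interpolating $\norm{r}{\L^{2p/(p-1)}}$ between $\norm{r}{\L^2}$ and the uniform bound $\norm{r}{\L^\infty}\le M$ produces the characteristic structure $\frac{d}{dt}y(t) \le C\, p\, y(t)^{1-1/p}$ for $y = \norm{r_t}{\L^2}^2$ (up to adjusting exponents), where the constant $C$ depends on $M = \sup_t\norm{q^1_t}{\L^\infty}+\norm{q^2_t}{\L^\infty}$ but not on $p$.

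Finally I would close the argument by the Osgood/Yudovich uniqueness criterion: since $y(0)=0$ and the differential inequality holds for every $p$ with a constant linear in $p$, optimizing over $p$ (taking $p\sim -\log y$) yields $\frac{d}{dt}y \le C y \log(1/y)$ near $y = 0$, whose only solution with $y(0)=0$ is $y\equiv 0$. This forces $r\equiv 0$, i.e. $q^1 = q^2$, on $[0,T]$. The main obstacle I anticipate is not the Osgood step itself but the rigorous justification of the energy identity in the low-regularity multi-layer framework: one must verify that $r$ is an admissible test function against its own equation despite $q^i$ being only weakly continuous in $\L^\infty$, and that the matrix structure of $\op = \grad^\perp(A+L)^{-1}$ does not spoil the cancellation $\brak{(u^1\cdot\grad)r,r}=0$ — here the negative-definiteness and symmetry of $(A+L)$ arranged earlier should ensure that each layer's velocity is divergence-free and that the elliptic estimate applies component-wise. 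A careful mollification or a by-now-standard approximation argument (as in \cite{YUDOVICH19631407} and \cite{Chen2019}) will be needed to make the formal testing legitimate.
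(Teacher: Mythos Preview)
Your energy estimate is set up at the wrong level: testing the equation for $r = q^1 - q^2$ against $r$ itself cannot close. The coupling term $\brak{(v\cdot\grad)q^2, r}$ forces you to control either $\grad q^2$ or (after integration by parts) $\grad r$, and neither is available --- the solutions lie only in $L^\infty(0,T;\L^\infty)$, with no spatial derivative on $q$. Your sentence ``a linear-in-$p$ bound on $\norm{\grad q^2}{\L^p}$'' conflates two different objects: \cref{elliptic estimate} gives $\norm{\grad^2\psi}{\L^p}\le Cp\norm{q}{\L^\infty}$, i.e.\ control on second derivatives of the \emph{stream function}, not on $\grad q$ (which would be a third derivative of $\psi$). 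Replacing this by a bound on $\norm{v}{\W^{1,p}}$ does not help either, since that quantity does not appear in your H\"older splitting.

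The fix, which is exactly what the paper does, is to run the energy estimate one level down: set $\bar\psi = \psi^1 - \psi^2$ and estimate $z_t = \norm{\grad\bar\psi_t}{\L^2}$ (equivalently an $\H^{-1}$-type norm of $r$). Writing the difference equation for $(A+L)\bar\psi$ and testing against $\bar\psi$, the dangerous term becomes $\brak{(A+L)\bar\psi, u\cdot\grad\bar\psi}$; integrating the Laplacian by parts produces an integrand of the form $|\grad^2\psi|\,|\grad\bar\psi|^2$. Now the elliptic bound $\norm{\grad^2\psi}{\L^{2/\eps}}\le C\eps^{-1}\norm{q}{\L^\infty}$ is exactly what is needed, and H\"older with exponents $2/\eps$ and $2/(2-\eps)$ together with $\grad\bar\psi\in L^\infty$ yields $\dot z \le C\eps^{-1} z^{1-\eps} + C z$, from which the Yudovich/Osgood step concludes $z\equiv 0$. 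Your Osgood endgame and your remarks about justifying the pairing are fine; only the choice of energy quantity needs to change.
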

\noindent
In fact, we prove a stronger result than probabilistically strong well-posedness:
for $\PP$-a.e. $\omega\in\Omega$, existence and uniqueness hold for the deterministic equation with rough forcing $W(\omega)$. This notion is often referred to as ``path-by-path'' uniqueness. 

\begin{proof}
Assume there exist two solutions $q$ and $q'$ and let $u$, $u'$ be the associated velocities, $\psi$, $\psi'$ the stream functions. We denote their difference by  
\begin{equation*}
	\bar\psi = \psi - \psi' . 
\end{equation*}
By \cref{def:sol2}, for all test functions $\phi \in \mathbf{C}^1_{t,x}$ with compact support it holds
\begin{align*}
	\brak{ (A+L) \bar\psi_t, \phi(t)}  
	={} & \int_0^t\brak{(A+L)\bar\psi_s, \partial_s\phi(s)}ds  - \gamma\int_0^t\brak{(A+L)\bar\psi_s, \phi(s)}ds \\ 
	& + \int_0^t \brak{(A+L)\psi'_s, \grad^\perp \bar\psi_s\cdot \grad \phi(s) }ds \\ 
	& + \int_0^t \brak{(A+L)\bar\psi_s, u_s\cdot \grad \phi(s) }ds .
\end{align*}
If $q$ is a solution of \eqref{eq:qgsto} in $L^{\infty}(0, T; \L^{\infty})$, then $\frac{d}{dt}(q-W)\in L^2(0, T; \H^{-1})$, as it is easily derived like in \eqref{eq: derivative bound}. Therefore $ q-q'$ has the same regularity, and we can integrate by parts in time to obtain  
\begin{align*}
	\int_0^t\brak{ \partial_s(A+L)\bar\psi_s, \phi(s) } ds 
	={} & - \gamma\int_0^t\brak{(A +L)\bar\psi_s, \phi(s)}ds \\
	& + \int_0^t \brak{(A +L)\psi'_s, \grad^\perp \bar\psi_s\cdot \grad \phi(s) }ds \\ 
	& + \int_0^t \brak{(A +L)\bar\psi_s, u_s\cdot \grad \phi(s) }ds.
\end{align*}
By a density argument this is extended to $\phi \in L^{\infty}(0, T; \H^1)$, hence we can choose $\phi= \bar\psi$ and obtain 
\begin{multline*}
	\frac12\norm{{D^{1/2}}\grad \bar \psi_t}{\L^2}^2 
	= \frac12 \brak{ L\bar\psi_t, \bar \psi_t }  - \int_0^t \brak{(A +L)\psi'_s, \grad^\perp \bar\psi_s \cdot \grad \bar\psi_s }ds \\ 
	- \int_0^t \brak{(A +L)\bar\psi_s, u_s\cdot \grad \bar\psi_s  }ds + \gamma\int_0^t\brak{(A +L)\bar\psi_s, \bar\psi_s }ds\\
	=:I+II+III+IV.
\end{multline*}
Since we are assuming $L$ to be negative semi-definite the term $I$ is negative, and $II$ vanishes because 
$\grad^\perp \bar\psi_s\cdot \grad \bar\psi_s = 0 $. 
		
We now need estimates on the remaining terms. As for $III$, the idea is to integrate by parts as in \cite{YUDOVICH19631407}: the equality $u=\grad^\perp \psi$ leads, up to a constant depending on $A$, to 
\begin{equation*}
	\int_0^t\int_D \derp{^2\psi_s}{x_1\partial x_2}\Big[ \big(\derp{\bar\psi_s}{x_1}\big)^2 - \big(\derp{\bar\psi_s}{x_2}\big)^2\Big] + \Big( \derp{^2\psi_s}{x_2^2} - \derp{^2\psi_s}{x_1^2}\Big)\derp{\bar\psi_s}{x_1}\derp{\bar\psi_s}{x_2} dxds ;
\end{equation*}
taking absolute values we reduce ourselves to estimate the quantity 
\begin{equation}\label{yud1} 
	\int_0^t\int_D |\grad^2\psi_s||\grad \bar \psi_s|^2 dxds .
\end{equation}
By assumption $\psi \in L^\infty(0, T; B^\infty)$, where $B^\infty$ is the space of those $\psi$ solving \eqref{QGBVP} for some $q_0\in \L^\infty$ (it is a Banach space if endowed with the norm $\norm{\psi}{B^\infty}\coloneqq\norm{(A+L)\psi}{\L^\infty}$), so we do not have bounds on the $\L^\infty_{t,x}$ norm of the first factor, but we have a control on any $\L^p$ norm, for $p<\infty$: in fact, we know that $\grad\psi, \grad\psi'\in L^{\infty}_t(W^{1,p}_{0, x})^3\subset L^\infty_{t,x}$. 
Let $M^{\eps}\coloneqq\|\grad \bar \psi\|^{\eps}_{L^\infty_t\L^\infty_x}$, $\forall \epsilon>0$.
By H\"older inequality with conjugated exponents $2/(2-\eps)$ and $2/\eps$, and since by \cref{elliptic estimate}
\begin{equation*}
	\norm{\grad^2 \psi}{\L^{2/\eps}}\le \frac{C}{\eps}\norm{q}{\L^\infty} ,
\end{equation*}
\eqref{yud1} can be bounded by 
\begin{equation*}
	\frac{CM^\epsilon}
	{\eps}\norm{q}{L^\infty_t\L^\infty_x}\int_0^t\norm{\grad\bar\psi_s}{ \L_x^2}^{2-\eps}ds .
\end{equation*}
We are thus left with $IV$: we use again H\"older inequality with exponents $2$, $\infty$, and $2$, then we estimate $\norm{L\bar\psi_s}{\L^2} \leq \norm{L}{HS} \norm{\bar\psi_s}{\L^2} \leq C\norm{\grad \bar \psi_s}{\L^2}$, where $\norm{\cdot}{HS}$ is the Hilbert-Schmidt norm and the last inequality follows from the Poincaré inequality (since $\bar\psi \in \W^{1,2}_0$). Hence, it holds 
\begin{equation*}
	\int_0^t \brak{ L\bar\psi_s, u_s\cdot \grad \bar\psi_s }ds \leq C\int_0^t\norm{\grad \bar \psi_s}{\L^2}^2ds
\end{equation*}
Gathering all estimates together and setting $z_t\coloneqq|\grad\bar\psi_t|_{\L_x^2}$
we obtain
\begin{equation*}
	\frac12z^2_t \leq \frac{C^\epsilon_1}{\eps} \int_0^t z^{2-\eps}_sds + C_2\int_0^t z^2_sds .
\end{equation*}
Differentiation in time then yields
\begin{equation*}
	\frac{d}{dt}z_t \leq \frac{C^\epsilon_1}{\eps} z^{1-\eps}_t + C_2 z_t
\end{equation*}
from which, by Gr\"onwall inequality,
\begin{equation*}
	z_t \leq (C_1^{\eps}t)^{\frac{1}{\epsilon}}e^{tC_2} .
\end{equation*}
Recalling that $C_1^\epsilon$ is bounded in $\eps$, it is possible choose some $t_0\leq T$ such that  $C_1^{\epsilon}t<1$ whenever $t\leq t_0$. Eventually, sending $\eps$ to zero we obtain $z\equiv 0$ in $[0, t_0]$. We can then repeat the same argument starting from $t_0$ and see that $z\equiv 0$ on $[0,T]$. This, together with $\bar\psi = 0$ on $\partial D$, implies $\bar\psi\equiv 0$ and consequently $q\equiv q'.$
\end{proof}
	

\subsection{Stability Properties of Solutions}\label{sec:stability}

In order to complete the proof of \cref{thm:main} we need some further stability results, which lay the ground for the proof of the existence
of an invariant measure, \cref{thm:invmeasure}. We will denote by $q(t_0, t, \chi)$ the solution to \eqref{eq:qgsto} at time $t$, started at time $t_0$ with initial datum $\chi$. Whenever the initial time is $t_0=0$, it is omitted.
	
\begin{proposition}\label{prop: cont dep from data}
Let $\chi^n \weakstarto \chi$ in $\L^\infty(D)$ and let $q^n_t \coloneqq q(t_0, t, \chi^n)$ be the solution of \eqref{eq:qgsto} with initial datum $\chi^n$ at some fixed initial time $t_0$. 
Then $\PP$-almost surely $q_t^n \weakstarto q_t \coloneqq q(t_0, t, \chi) \in \L^\infty(D)$ for every $t>t_0$. 
\end{proposition}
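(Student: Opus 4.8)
The plan is to work pathwise, fixing $\omega$ in the full-probability set on which $W(\omega) \in C(t_0, T; \H^{5/2})$, and to run a compactness-plus-uniqueness argument in the spirit of \cref{prop:existence}. First, since $\chi^n \weakstarto \chi$ in $\L^\infty$, the Banach--Steinhaus theorem provides a uniform bound $\sup_n \norm{\chi^n}{\L^\infty} =: M < \infty$. By \cref{lemma:ViscSolBound} together with \cref{remark: a priori estimates} (whose bounds pass to the inviscid solutions built in \cref{prop:existence}), the family $\{q^n\}_n$ is then bounded in $L^\infty(t_0, T; \L^\infty)$ by a constant depending only on $M$, $\gamma$, $T$ and $\norm{W(\omega)}{L^\infty_t \H^{5/2}_x}$, uniformly in $n$. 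Setting $\eta^n := q^n - W$, the computation behind \eqref{eq: derivative bound} --- once the $\L^4$ (indeed $\L^\infty$) norm of $\eta^n$ is controlled directly rather than through a Sobolev embedding --- furnishes a bound for $\frac{d}{dt}\eta^n$ in $L^2(t_0, T; \H^{-1})$ that is likewise uniform in $n$.

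Equipped with boundedness of $\{\eta^n\}$ in $L^\infty(t_0, T; \L^2)$ and of $\{\frac{d}{dt}\eta^n\}$ in $L^2(t_0, T; \H^{-1})$, the Aubin--Lions--Simon lemma yields precompactness of $\{\eta^n\}$ in $C(t_0, T; \H^{-\delta})$ for every $\delta \in (0,1)$. From any subsequence I extract a further subsequence along which $\eta^{n_k} \to \bar\eta$ strongly in $C(t_0, T; \H^{-\delta})$ and weakly$\star$ in $L^\infty(t_0, T; \L^\infty)$. With exactly these two modes of convergence I pass to the limit in the weak formulation of \eqref{eq:EtaEpsilon} as in \cref{prop:existence}: the strong $\H^{-\delta}$ convergence upgrades, through the elliptic regularity of $\op$ and the uniform $\W^{1,p}$ bound on the velocities, to strong convergence $u^{n_k} \to \bar u$ in $C(t_0, T; \L^\infty)$, which is precisely what is needed to pass the transport nonlinearity to the limit when it is tested against a smooth $\phi$ and paired with the weakly$\star$ convergent $q^{n_k}$.

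Next I would identify the initial datum and invoke uniqueness. The evaluation $\eta^{n}(t_0) = \chi^n - W(t_0)$ converges strongly in $\H^{-\delta}$: indeed $\chi^n \weakstarto \chi$ in $\L^\infty$ together with the uniform bound gives weak convergence in $\L^2$, which upgrades to strong convergence in $\H^{-\delta}$ via the compact embedding $\L^2 \stackrel{c}{\hookrightarrow} \H^{-\delta}$. Hence $\bar\eta(t_0) = \chi - W(t_0)$, so that $\bar q := \bar\eta + W$ solves \eqref{eq:qgsto} with initial datum $\chi$ at time $t_0$. By the path-by-path uniqueness of \cref{prop:uniqueness}, $\bar q = q(t_0, \cdot, \chi)$; since the limit does not depend on the chosen subsequence, the whole sequence $\eta^n \to \eta$ converges in $C(t_0, T; \H^{-\delta})$, and in particular $q^n_t \to q_t$ strongly in $\H^{-\delta}$ for each fixed $t > t_0$.

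Finally I would upgrade this pointwise-in-time strong $\H^{-\delta}$ convergence to the claimed weak$\star$ convergence in $\L^\infty$. Fixing $t$ and $g \in \L^1$, a density argument closes the proof: approximating $g$ by $g_\eps \in \D$ in $\L^1$, I split $\brak{q^n_t - q_t, g}$ into $\brak{q^n_t - q_t, g_\eps}$, which vanishes by the $\H^{-\delta}$ convergence, and $\brak{q^n_t - q_t, g - g_\eps}$, bounded by $2M \norm{g - g_\eps}{\L^1}$ thanks to the uniform $\L^\infty$ bound; letting $n \to \infty$ and then $\eps \to 0$ yields $q^n_t \weakstarto q_t$. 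The hard part will not be any single estimate but the orchestration of the scheme: the weak$\star$ topology is not metrizable, so convergence must be routed through the metric space $\H^{-\delta}$ by compactness, and it is \cref{prop:uniqueness} that promotes subsequential limits to convergence of the full sequence; the delicate technical point within this scheme is securing the strong convergence of the velocities required to pass the transport term to the limit.
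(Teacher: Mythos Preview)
Your proposal is correct and follows essentially the same compactness-plus-uniqueness scheme as the paper: uniform $\L^\infty$ and $\H^{-1}$-time-derivative bounds, Aubin--Lions--Simon compactness in $C(t_0,T;\H^{-\delta})$, passage to the limit in the weak formulation, identification of the limit via \cref{prop:uniqueness}, and a final density argument to upgrade from smooth test functions to all of $\L^1$. If anything your write-up is slightly cleaner, since you use the pointwise-in-time $\H^{-\delta}$ convergence coming directly from $C(t_0,T;\H^{-\delta})$, whereas the paper inserts an additional time-localization step before the density argument.
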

	
\begin{proof}
By the a priori estimates in \cref{lemma:ViscSolBound} and by \cref{remark: a priori estimates}, $\eta^n_t= q^n_t- W_t$ is uniformly bounded in $L^{\infty}(0, T; \L^\infty)$ and $\frac{d}{dt}\eta^n_t $ is uniformly bounded in $L^2(0, T; \H^{-1})$, hence $\eta^n$ is relatively compact in $C(0, T; \H^{-\delta})$. 
Consequently we can extract a subsequence (still denoted as $\eta^n$) such that 
\begin{align*}
	\eta^n\rightarrow \eta \qquad & \in C(0, T; \H^{-\delta}) 
	\\
	\eta^n\weakstarto \eta \qquad & \in L^{\infty}(0, T; \L^\infty) 
\end{align*}
for some $\eta \in C(0, T; \H^{-\delta}) \cap L^{\infty}(0, T; \L^\infty)$.
The same holds for $q^n$, with limit $q = \eta + W_t$. 
Then, for any $g\in \L^1(D)$ 
\begin{equation*}
	\int_{t_0}^T  \brak{ q_t^n, g} dt\rightarrow \int_{t_0}^T \brak{ q_t, g} dt
\end{equation*}
and, testing against $\chi^\delta_t g$ for some $\chi^\delta_t$ localized around $t$ at scale $\delta$, we get, sending $\delta\rightarrow 0$,
\begin{equation}\label{weak convergence}
    \brak{ q_t^n, g} \rightarrow \brak{ q_t, g}
\end{equation}
$\PP$-almost surely for almost every $t\in [t_0, T]$. 
Now we are ready to see that $q_t$ solves \eqref{eq:qgsto}. In fact, arguing as in the proof of \cref{prop:existence} and using what we just showed for the first term, we are able to pass to the limit the weak formulation for $\eta^n$ for every $g\in \C_c^1(D)$, 
\begin{equation*}
	\brak{\eta^n_t, g} 
	= \brak{\chi^n, g} 
	+ \int_{t_0}^t \left( \brak{\eta^n_s, u^n\cdot\grad g} 
	- \gamma\brak{\eta^n_s,g} 
	+ \brak{W_s, (u^n_s) \cdot \grad g} 
	- \gamma\brak{W_s, \eta^n_s} \right) ds .
\end{equation*}
from which it follows that $q_t$ is the unique solution of \eqref{eq:qgsto} with initial data $\chi$, that $q_t\in C_w(0, T, \L^\infty)$ and consequently that \eqref{weak convergence} holds for all $t\in [0, T]$, but only if $g\in \C_c^1(D)$. We can conclude by considering $q_t^n-q_t$ as a linear operator on $\L^1$, for every $t>t_0$: it converges pointwise to zero on the dense subspace $\C^1_c(D)$ and, since $\{q_t^n\}$ and $q_t$ are uniformely bounded in $\L^\infty$, a simple density argument yields $\brak{q_t^n-q_t,g} \rightarrow 0$ for all $t\ge t_0$, $g\in \L^1$. 
\end{proof}
	
\cref{prop: cont dep from data} allows to derive a weak Feller property for the semigroup associated to the solution of our problem. Recall (cf. also \cref{sec:invariantmeasures}) that, for every $\phi\in B_b(\L^\infty, \tau_\star)$, such semigroup is defined as
\begin{equation}\label{eq:markov_semigroup}
	P_t\phi(\chi)\coloneqq\expt[]{\phi(q(t, \chi))}  
\end{equation}
	
\begin{corollary} \label{cor: feller}
The solution map $(t, \chi)\mapsto q(t_0, t, \chi)$ defines a sequentially weakly$\star$ Feller semigroup in $\L^\infty$, i.e. $P_t$ is continuous as an operator 
\begin{equation*}
	P_t: SC_b(\L^\infty, \tau_\star)\rightarrow SC_b(\L^\infty, \tau_\star)
\end{equation*}
on the space of bounded sequentially weakly$\star$ continuous functions ($f\in SC_b(\L^\infty, \tau_\star)$ if $f(\xi^n)\rightarrow f(\xi)$ whenever $\xi\weakstarto \xi$ in $\L^\infty$).
\end{corollary}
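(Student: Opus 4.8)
The plan is to verify that $P_t$ both maps $SC_b(\L^\infty, \tau_\star)$ into itself and does so continuously, the semigroup structure being inherited from the flow property of the solution map together with the uniqueness granted by \cref{prop:uniqueness}. Boundedness is immediate: for any $\chi\in\L^\infty$ we have $|P_t\phi(\chi)| = |\expt[]{\phi(q(t,\chi))}| \le \norm{\phi}{\infty}$. One should first check that the expectation is meaningful, i.e. that $\omega\mapsto\phi(q(t,\chi)(\omega))$ is measurable. This follows because $q_t$ is progressively measurable as a process with values in $(\L^\infty,\tau_\star)$ by \cref{thm:main}, while any sequentially $\tau_\star$-continuous $\phi$ is $\tau^b_\star$-continuous and hence Borel measurable for the $\tau_\star$ Borel $\sigma$-algebra (which coincides with the $\tau^b_\star$ one).

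The heart of the proof is sequential weak$\star$ continuity. Fix $\phi\in SC_b(\L^\infty,\tau_\star)$ and a sequence $\chi^n\weakstarto\chi$ in $\L^\infty$. \Cref{prop: cont dep from data} gives, $\PP$-almost surely, the weak$\star$ convergence $q(t,\chi^n)\weakstarto q(t,\chi)$ for every $t>t_0$; here the uniqueness of the limit (from \cref{prop:uniqueness}) is what promotes the subsequential convergence produced in that proof to convergence of the whole sequence. Applying the sequential weak$\star$ continuity of $\phi$ pathwise then yields $\phi(q(t,\chi^n))\to\phi(q(t,\chi))$ $\PP$-almost surely. Since $\phi$ is bounded by $\norm{\phi}{\infty}$, dominated convergence allows to pass the limit through the expectation,
\[
	P_t\phi(\chi^n)=\expt[]{\phi(q(t,\chi^n))}\longrightarrow\expt[]{\phi(q(t,\chi))}=P_t\phi(\chi),
\]
so $P_t\phi\in SC_b(\L^\infty,\tau_\star)$.

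I expect the only genuinely delicate point to be the passage from the pathwise, almost-sure convergence supplied by \cref{prop: cont dep from data} to convergence of the deterministic numbers $P_t\phi(\chi^n)$. This is precisely where the uniform boundedness of $\phi$ and dominated convergence are indispensable, and where one must make sure that the almost-sure convergence holds along the full sequence — not merely along a subsequence — so that no random extraction of indices is needed before taking expectations.
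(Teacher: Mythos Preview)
Your proof is correct and matches the paper's approach: the paper states the corollary as an immediate consequence of \cref{prop: cont dep from data} without spelling out details, and your argument (pathwise weak$\star$ convergence of $q(t,\chi^n)$ from that proposition, sequential continuity of $\phi$, then dominated convergence) is exactly the intended one-line justification made explicit. Your additional remarks on measurability and on full-sequence convergence are accurate and fill in points the paper leaves tacit.
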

	
Since the weak$\star$ topology is in general non-metrizable, continuity and sequential continuity do not coincide. 
Considering instead the bounded weak$\star$ topology, continuity with respect to $\tau^b_{\star}$ is equivalent to sequential continuity with respect to $\tau_\star$, as recalled in \cref{sec:preliminaries}.
Hence, $P_t$ is continuous as an operator
\begin{equation*}
	P_t: C_b(\L^\infty, \tau^b_{\star})\rightarrow C_b(\L^\infty, \tau^b_{\star}) .
\end{equation*}
	
The forthcoming Proposition concerns regularity of solutions when the initial datum is in $\W^{1,4}$, a separable proper subspace of $\L^\infty$. Together with the dense embedding of $\W^{1,4}$ into $(\L^\infty, \tau_\star)$, this will be intstrumental in establishing the Markov property of the solution semigroup.
	
\begin{proposition}\label{prop: sol in w14}
Let $q_0\in\W^{1,4}$. Then the solution $q(t_0, t, q_0)$ to \eqref{eq:qgsto} started at $q_0$ belongs to $C_w(t_0, T; \W^{1, 4})$ $\PP$-almost surely and is adapted as a process in $\W^{1,4}$.
\end{proposition}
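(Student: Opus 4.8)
The plan is to propagate the $\W^{1,4}$ regularity through the vanishing viscosity scheme, working pathwise: I fix $\omega$ in the full-probability set on which $W(\omega)\in C(0,T;\H^{5/2})$, derive an a priori bound on $\norm{\eta^\eps_t}{\W^{1,4}}$ that is uniform in $\eps$, and then pass to the limit along the subsequence already produced in \cref{prop:existence}. Writing \eqref{eq:EtaEpsilon} componentwise and differentiating in space, the $k$-th derivative $\de_k\eta^{\eps,i}$ solves a transport--diffusion equation whose only genuinely dangerous term is the stretching contribution $(\de_k u^{\eps,i}\cdot\grad)\eta^{\eps,i}$, the forcing being $-\de_k[(u^{\eps,i}\cdot\grad)W^i]-\gamma\,\de_k W^i$. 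For fixed $\eps>0$ parabolic regularity makes $\eta^\eps$ smooth enough to justify the computation rigorously (alternatively one runs it at the Galerkin level as in \cref{prop:existenceNSE}).

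Testing the equation for $\grad\eta^{\eps,i}$ against $|\grad\eta^{\eps,i}|^2\grad\eta^{\eps,i}$ and summing over $i$, the transport term drops out by incompressibility of $u^\eps$, the viscous term $\eps^2\Delta$ produces a favorable sign-definite contribution which I discard, and the damping gives $-\gamma\norm{\grad\eta^\eps}{\L^4}^4$. Setting $z_t\coloneqq\norm{\grad\eta^\eps_t}{\L^4}$, this yields
\begin{equation*}
  \frac{d}{dt} z_t \le \big(\norm{\grad u^\eps_t}{\L^\infty}+C\big)z_t + \norm{F^\eps_t}{\L^4},
\end{equation*}
with $F^\eps=\grad[(u^\eps\cdot\grad)W]+\gamma\grad W$. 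The forcing is controlled uniformly in $\eps$: expanding $\grad[(u^{\eps,i}\cdot\grad)W^i]=(\grad u^{\eps,i})(\grad W^i)+(u^{\eps,i}\cdot\grad)\grad W^i$, one bounds it in $\L^4$ by $\norm{\grad u^\eps}{\L^4}\norm{\grad W}{\L^\infty}+\norm{u^\eps}{\L^\infty}\norm{\grad^2 W}{\L^4}$. Here $\norm{\grad u^\eps}{\L^4}\le C\norm{q^\eps}{\L^\infty}$ by \cref{elliptic estimate}, $\norm{u^\eps}{\L^\infty}$ and $\norm{q^\eps}{\L^\infty}$ are bounded uniformly in $\eps$ by \cref{lemma:ViscSolBound}, while $\norm{\grad W}{\L^\infty}$ and $\norm{\grad^2 W}{\L^4}$ are finite because in two dimensions $\H^{5/2}\hookrightarrow\W^{1,\infty}$ and $\grad^2 W\in\H^{1/2}\hookrightarrow\L^4$. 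It is precisely this last (borderline) embedding that pins down the sharp noise regularity $\H^{5/2}$ anticipated in \cref{rmk:regularity}.

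The main obstacle is that $\norm{\grad u^\eps_t}{\L^\infty}$ cannot be controlled by $\norm{q^\eps_t}{\L^\infty}$ alone, since the Calderón--Zygmund bound for the map $q\mapsto\grad u$, $u=\op q$, degenerates logarithmically. I will close the estimate through a Brezis--Gallouet / Beale--Kato--Majda type inequality
\begin{equation*}
  \norm{\grad u^\eps_t}{\L^\infty}\le C\Big(1+\norm{q^\eps_t}{\L^\infty}\log\big(e+\norm{q^\eps_t}{\W^{1,4}}\big)\Big),
\end{equation*}
valid since $\W^{1,4}\hookrightarrow\L^\infty$ in dimension two and the $\L^p$ operator norm of $q\mapsto\grad\op q$ grows linearly in $p$ (\cref{elliptic estimate}), splitting $q^\eps$ into a low mode estimated in $\L^\infty$ and a high mode estimated in $\W^{1,4}$. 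Since $\norm{q^\eps_t}{\W^{1,4}}\le C+z_t$ and $\norm{q^\eps_t}{\L^\infty}$ is bounded uniformly in $\eps$, the inequality becomes $\tfrac{d}{dt}z_t\le C\,z_t\log(e+z_t)+C$, an Osgood inequality whose solutions do not blow up in finite time. Gr\"onwall in logarithmic form then gives
\begin{equation*}
  \sup_{[0,T]}\norm{\eta^\eps_t}{\W^{1,4}}\le C\big(\gamma,T,\norm{q_0}{\W^{1,4}},\norm{W(\omega)}{L^\infty_t\H^{5/2}_x}\big),
\end{equation*}
uniformly in $\eps$.

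It remains to transfer the bound to the limit and to recover the time regularity. As $\W^{1,4}$ is reflexive and separable, the uniform bound makes $\{\eta^\eps\}$ weak$\star$ precompact in $L^\infty(0,T;\W^{1,4})$, and the limit must coincide with the $\eta$ of \cref{prop:existence}, identified through the strong convergence in $C(0,T;\H^{-\delta})$; hence $\eta\in L^\infty(0,T;\W^{1,4})$ with the same bound, and likewise $q=\eta+W$. Since $q\in L^\infty(0,T;\W^{1,4})\cap C(0,T;\H^{-\delta})$ and $\W^{1,4}\hookrightarrow\H^{-\delta}$ continuously and densely, the weak-continuity lemma already used in \cref{prop:existence} (\cite[Ch.~3, Lemma~1.2]{temam}) yields $q\in C_w(t_0,T;\W^{1,4})$. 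Finally, adaptedness as a $\W^{1,4}$-valued process is inherited from the adapted approximations $\eta^\eps$ (adapted in $\L^2$ by \cref{prop:existenceNSE}) together with the uniform $\W^{1,4}$ bound: separability of $\W^{1,4}$ upgrades the resulting weak measurability to strong measurability, so the limit is adapted as a process in $\W^{1,4}$.
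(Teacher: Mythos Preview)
Your proposal is correct and follows essentially the same route as the paper: an $\L^4$ energy estimate on $\grad\eta$, closed via a Beale--Kato--Majda type logarithmic bound on $\norm{\grad u}{\L^\infty}$ (which the paper isolates as \cref{lemma: kato}, citing Ferrari), yielding an Osgood-type differential inequality; the identification of $\H^{5/2}$ as the sharp noise regularity through $\grad^2 W\in\H^{1/2}\hookrightarrow\L^4$ is exactly the observation the paper makes. The only cosmetic difference is that you run the a priori estimate on the viscous approximants $\eta^\eps$ while the paper suggests the Galerkin approximants, but both serve the same purpose of justifying the formal computation.
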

	
The proof consists in a Gr\"onwall argument and closely resembles that of \cite[Theorem 5]{Bessaih2020}. It is however possible to weaken the regularity hypothesis on $W_t$ required in the cited work: in order to highlight how so, we provide a sketch of the proof.
Let us state first a preliminary result, adapting a classic bound from elliptic PDE theory.
	
\begin{lemma}\label{lemma: kato}
Given a solution $q$ of \eqref{eq:qgsto}, if $u$ is the associated velocity it holds
\begin{equation*}
	\norm{\grad u}{\L^\infty}
	\leq 
	C \norm{q}{\L^\infty} \big(1 + \log_+ \big(\norm{\grad q}{\L^4_x}\big) \big) 
\end{equation*}
for some constant $C>0$.
\end{lemma}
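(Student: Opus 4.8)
The plan is to reduce the statement to a logarithmic interpolation estimate for the Hessian of the stream function, and then to extract the logarithm by optimizing a family of $\mathbf{L}^p$ bounds. Writing $\psi=(A+L)^{-1}q$, we have $u=\grad^\bot\psi$, so the entries of $\grad u$ are exactly second derivatives of $\psi$ and $\norm{\grad u}{\L^\infty}$ is comparable to $\norm{\grad^2\psi}{\L^\infty}$; it thus suffices to bound the latter. The two elliptic ingredients I would isolate first are: (i) the $p$-linear bound $\norm{\grad^2\psi}{\L^p}\le Cp\,\norm{q}{\L^\infty}$ for every finite $p$, which is precisely \cref{elliptic estimate} summed over the three layers, the linear growth of the constant in $p$ being the crucial feature; and (ii) a higher-regularity bound $\norm{\grad^2 u}{\L^4}=\norm{\grad^3\psi}{\L^4}\le C(\norm{\grad q}{\L^4}+\norm{q}{\L^\infty})$, obtained from $W^{3,4}$ elliptic regularity for the Dirichlet problem $(A+L)\psi=q$ with $q\in\W^{1,4}$. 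If $\grad q\notin\L^4$ the asserted bound is vacuous, so we may assume $q\in\W^{1,4}$, which guarantees (ii) and also $\grad u\in\W^{1,4}\hookrightarrow C^{0,1/2}(\bar D)$ in two dimensions.

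With these in hand, the core step is a scale decomposition at each point. For $x\in D$ and a radius $\rho>0$ I would split $\grad u(x)$ into its average over $B_\rho(x)\cap D$ plus the oscillation around that average. The average is controlled by H\"older's inequality, giving $C\rho^{-2/p}\norm{\grad u}{\L^p}\le C\rho^{-2/p}p\,\norm{q}{\L^\infty}$, while the oscillation is controlled by the two-dimensional Morrey embedding $\W^{1,4}\hookrightarrow C^{0,1/2}$, giving $C\rho^{1/2}\norm{\grad^2 u}{\L^4}$. Choosing $p\sim\log(1/\rho)$ makes $\rho^{-2/p}$ bounded while keeping the factor $p$ only logarithmic, so that the average term is of order $\norm{q}{\L^\infty}\log(1/\rho)$; then optimizing $\rho$ so that $\rho^{1/2}\norm{\grad^2 u}{\L^4}\lesssim\norm{q}{\L^\infty}$ (equivalently $\rho\sim(\norm{q}{\L^\infty}/\norm{\grad^2 u}{\L^4})^2$, truncated at a fixed $\rho_0$) converts the logarithm into $\log_+\norm{\grad^2 u}{\L^4}$ and hence, by (ii), into $\log_+\norm{\grad q}{\L^4}$. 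The same balance can be phrased as a Gagliardo--Nirenberg interpolation $\norm{\grad u}{\L^\infty}\lesssim\norm{\grad^2 u}{\L^4}^{4/(4+p)}\norm{\grad u}{\L^p}^{p/(4+p)}$ optimized over $p$; I find the pointwise averaging version more transparent for tracking constants.

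The main obstacle is precisely this constant tracking: one must use the linear-in-$p$ growth in (i) in an essential way, since a $p$-independent $\mathbf{L}^p$ bound would only yield a power of $\norm{\grad q}{\L^4}$ rather than a logarithm, and one must verify that the Morrey and averaging constants do not interfere with the optimization. A minor point to handle is that the target is stated with $\log_+\norm{\grad q}{\L^4}$ alone: the stray terms produced by the optimization (in particular $\log_+\norm{q}{\L^\infty}$) are absorbed using that $q$ vanishes on $\partial D$, whence Poincar\'e and the embedding $\W^{1,4}\hookrightarrow\L^\infty$ give $\norm{q}{\L^\infty}\le C\norm{\grad q}{\L^4}$, so that $\log_+\norm{q}{\L^\infty}\le C(1+\log_+\norm{\grad q}{\L^4})$ and all remaining terms collapse into the claimed bound with a constant independent of $q$.
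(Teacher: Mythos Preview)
Your argument is correct and follows the classical Brezis--Gallouet/Brezis--Wainger route: bound the $\L^p$ norm of $\grad u$ with the $p$-linear Calder\'on--Zygmund constant from \cref{elliptic estimate}, control the H\"older oscillation via the Morrey embedding $\W^{1,4}\hookrightarrow C^{0,1/2}$ together with $W^{3,4}$ elliptic regularity for $\psi$, and then balance the two by optimizing over the scale $\rho$ (equivalently over $p$). The endgame --- absorbing $\log_+\norm{q}{\L^\infty}$ into $\log_+\norm{\grad q}{\L^4}$ through Poincar\'e and $\W^{1,4}_0\hookrightarrow\L^\infty$, using the Dirichlet condition on $q$ --- is also fine.

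The paper does \emph{not} carry out this optimization explicitly: it quotes the logarithmic elliptic estimate of Ferrari \cite{Ferrari1993} as a black box, obtaining directly
\[
h_i\norm{\grad u^i}{\L^\infty}\le \bar C\Big(1+\log\Big(1+\tfrac{\norm{Q^i}{\W^{1,4}}}{\norm{Q^i}{\L^\infty}}\Big)\Big)\norm{Q^i}{\L^\infty},\qquad Q^i=q^i-\textstyle\sum_j L_{ij}\psi^j,
\]
and then spends the rest of the proof on elementary log manipulations and on passing from $Q^i$ back to $q$. So your approach is a self-contained reproof of the Ferrari-type inequality in this specific setting, whereas the paper outsources that step. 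What you gain is transparency about \emph{why} the logarithm appears (the linear-in-$p$ growth of the Calder\'on--Zygmund constant) and independence from the cited reference; what the paper gains is brevity. Both reach the same destination, and your handling of the lower-order term $L\psi$ is implicitly cleaner, since it is simply absorbed into the elliptic estimates (i) and (ii) rather than carried along as in the paper.
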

	
\begin{proof}
The estimate is a consequence of the one contained in the work of A. Ferrari, \cite[Proposition 1]{Ferrari1993}. 
By definition, each component $u^i$ of $u$ solves the elliptic problem 
\begin{equation*}
	h_i\grad\times u^i = q^i - \sum_jL_{ij}\psi^j 
\end{equation*}
where both the coefficients $h_j$ of $D$ and the boundary conditions are as discussed in \cref{sec:overview}. Set for simplicity $Q^i\coloneqq q^i - \sum_j L_{ij}\psi^j$
By the estimate in \cite{Ferrari1993}, we then obtain
\begin{equation*}
	h_i\|\grad u^i\|_{\L^\infty} 
	\leq 
	\bar C \left( 1 + \log \left( 1 + \frac{\norm{Q^i}{\W_x^{1,4}}}{\norm{Q^i}{\L^\infty}} \right) \right) 
	\Big\|Q^i\Big\|_{\L^\infty} .
\end{equation*}
In fact, the regularity requirements in the statement of Ferrari are stricter, due to the fact that he works in a $3$-dimensional setting; since we only work in $2$ dimensions, reproducing his result requires weaker assumptions due to lower regularity loss in Sobolev embeddings. 
		
In order to recover the estimate we need, we can work a little more on the right-hand side. Using that $\log(x+y)\le \log 2 +\log_+x + \log_+y$, as well as $-x\log x\le 1/e$, we get to 
\begin{equation*}
	h_i \norm{\grad u^i}{\L^\infty} \leq C  \left( 1 + \norm{Q^i}{\L^\infty} \left( 1 + \log_+ \left( \norm{Q^i}{\W_x^{1,4}} \right) \right) \right) .
\end{equation*}
We then take the maximum over $i=1, 2, 3$, use the triangular inequality and again the properties of the logarithm again. This gives the estimate $\|L_{ij}\psi^j\|_{\W_x^{1,4}}\le C\|q^j\|_{\W_x^{1,4}}$ (same for the $\L^\infty$ norm, by \cref{elliptic estimate}). In the end, we get
\begin{equation*}
	\|\grad u\|_{\L^\infty}\le \bar{C}\|q\|_{\L^\infty}\left(C +\log_+\left(\|q\|_{\W^{1,4}_x}\right)\right) .
\end{equation*}
Finally, a few more computations and Poincaré inequality allow to substitute $\|\grad q\|_{\L^4}$ to $\|q\|_{\W^{1,4}_x}$, up to introducing new constants.
\end{proof}
	
\begin{proof}[Proof of \cref{prop: sol in w14} (sketch).]
By \cref{prop: cont dep from data}, we only need a bound on $\norm{\grad q}{\L^4}$. We begin by taking the gradient of \eqref{eq:qgsto}, then set $\eta = q - W$: for each layer $j$ and component $i$ we formally obtain the equation
\begin{equation*}
	\frac{d}{dt}\partial_{i}\eta^j + \partial_{i}(u^j\cdot \grad (\eta^j + W^j))+ \gamma\partial_i (\eta^j + W^j)=0 \qquad j=1,\ldots, 3 .
\end{equation*}
The following steps, as usual, should be performed first on the Galerkin's approximants of $\eta$ and then passed to the limit. We omit such passage for brevity. By multiplying the equation by $\partial_i\eta^j|\grad \eta^j|^2$, summing over $i$ and integrating over the spacial domain $D$, 
\begin{multline*}
	\frac14\frac{d}{dt}\norm{\grad\eta^j}{\L^2}^4 + \gamma\norm{\grad\eta^j}{\L^2}^4
	= - \gamma \sum_i^3 \brak{\partial_i W^j, \partial_i \eta^j|\grad\eta^j|^2}\\
	= - \sum_{i,k}^3\brak{\partial_i u^j_k\partial_k\eta^j, \partial_i\eta^j|\grad \eta^j|^2}
	- \sum_{i,k}^3\brak{\partial_i (u^j_k\partial_kW^j), \partial_i\eta^j|\grad\eta^j|^2}\\
	=:I+II+III.
\end{multline*}
The terms on the right-hand side are estimated by H\"older and Young inequalities: 
\begin{align*}
	|I| & \leq \frac{\gamma}{2}\norm{\grad \eta^j}{L^4}^4 +C_\gamma\norm{W^j}{W^{1,4}}, \\
	|II| & \leq C\norm{\grad u^j}{L^\infty}\norm{\grad\eta^j}{L^4}^4, \\
	|III| & \leq \sum_{i,k}^3\brak{\partial_i u^j_k \partial_kW^j + u^j_k\partial^2_{ik}W^j, \partial_i\eta^j|\grad\eta^j|^2} \\
	& \leq \norm{\grad \eta^j}{L^4}^4 + C\norm{\grad u^j}{L^4}^4 \norm{\grad W^j}{L^4}^4 + C'\norm{u^j}{L^\infty}^4 \norm{W^j}{W^{2,4}} .
\end{align*}
This way it becomes evident that the most regularity we need on the Brownian motion $W^j$ is for it to be in $W^{2,4}$. 
We can therefore limit ourselves, by Sobolev embedding, to requiring $W_t \in \H^{5/2}$. 
From the above computations it emerges that there still are three terms to estimate: $\norm{\grad u^j}{L^\infty}$, $\norm{\grad u^j}{L^4}$ and $\norm{u^j}{L^\infty}$. 
The first one is easily done via \cref{lemma: kato}. 
As for the second one, either \cref{lemma: kato} or \cref{elliptic estimate} and \cref{lemma:ViscSolBound} can be used. Finally, the third term is simply bounded by the second one. 
		
Thanks to the estimates on $I$, $II$ and $III$ we have that, summing over $j$,
\begin{multline*}
	\frac{1}{4} \frac{d}{d t}\norm{\grad \eta(t)}{\L^4}^4 
	\leq
	\norm{\grad \eta}{\L^4}^4 + C \norm{q}{\L^\infty} \left(C +\log_+\left(\|\grad q\|_{\L^4_x}\right)\right)  \norm{\grad \eta}{\L^4}^4 \\
	+ C \left( \gamma, t_0, T, \norm{q_0}{\L^\infty}, \norm{W}{C\left(0, T; \H^{5/2} \right)} \right) .
\end{multline*}
At this point, the proof goes along exactly as in \cite[Theorem 5]{Bessaih2020}, coming to 
\begin{equation*}
\sup_{t_0 \leq t \leq  T}\|\grad \eta \|_{\L^4} \leq C \left( \gamma, t_0, T, \norm{q_0}{\L^\infty}, \norm{W}{C\left(0, T; \H^{5/2} \right)} \right) .
\end{equation*}
	
Weak continuity in $\W^{1,4}$ is obtained as usual observing that it has been proved $\frac{d}{dt}\eta\in L^2(0, T; \H^{-1})$ and $\grad \eta \in L^{\infty}(0, T; \L^4)$\cite{temam}. Measurability is obtained by a standard argument, see for instance \cite{Bessaih1999}. 
\end{proof}
	
\begin{proof}[Proof of \cref{thm:main}]
We have already proved existence (\cref{prop:existence}) and uniqueness (\cref{prop:uniqueness}). 
Exploiting the additional integrability of $q$ and the density of $\H^1_0\cap \H^2$ into $\H^1_0$, we can relax the regularity requirements for test functions in the definition of weak solution and simply ask $\phi\in \H^1_0$, as 
\begin{equation*}
	\int_0^t\brak{q_s, U_s \cdot \grad \phi} d s
	\leq 
	\norm{q}{L^\infty_t\L^\infty_x} \norm{U}{L^\infty_t\L^2_x}\norm{\phi}{\H^1}
\end{equation*}
now gives a continuous operator on $\H^1_0$.
		
As for measurability, it follows by uniqueness and weak continuity in $\L^2$
.
Indeed, we know from \cref{prop:existenceNSE} that the Navier-Stokes approximants are measurable and, thanks to the well-posedness of the limiting problem, the whole sequence converges (in the appropriate sense) to the solution of the inviscid problem. 
Thus, since there is no dependence on $\omega\in\Omega$ in the choice of a suitable subsequence, solutions of the limit problem are also measurable.
	
Finally, stability in $\W^{1,4}$ is the content of \cref{prop: sol in w14} we just proved.
\end{proof}
	

\section{Invariant measure}\label{sec:invariantmeasures}
	

\subsection{Markov property}\label{ssec:markov}
	
The following technical result establishes in our context a well-known general principle, i.e. that time-homogeneous stochastic flows with independent increments are Markov processes. In order to be able to treat non-metric spaces, following \cite{Bessaih2020} we exploit a further assumption, that is the invariance of an embedded Banach space under the stochastic flow.
	
\begin{lemma}[Markov Property for Stochastic Flows]\label{lemma:markov}
Let $\pa{\Omega,\F,\F_t,\PP}$ be a filtered probability space and
let $(H,\mathcal{T})$ be a topological vector space.
		
Consider an $H$-valued stochastic flow $\xi_{s,t}^\chi(\omega)$,
i.e. a random field indexed by $0\leq s\leq t\leq T$, $\chi\in H$ such that:
\begin{itemize}
	\item $\xi_{s,s}$ is the identity map $\PP$-almost surely;
	\item $\PP$-almost surely, for all $s\leq r\leq t$, 
	$\xi_{s,t}^\chi=\xi_{r,t}^{\xi_{s,r}^\chi}$;
	\item if $\chi_n\to\chi$ in $H$ then $\PP$-almost surely for all $s,t\in [0,T]$ it holds $\xi^{\chi_n}_{s,t}\to\xi^\chi_{s,t}$ in $H$.
\end{itemize}
Assume that
\begin{itemize}
	\item (adaptedness) $\omega\mapsto \xi_{0,t}^\chi(\omega)$ is $\F_t$-measurable for all $t\in [0,T]$;
	\item (independence of increments) for all $s\leq t$, $\xi_{s,t}^\chi$ is independent of $\F_s$;
	\item (time homogeneity) for all $\chi\in H$, $\xi_{t,t+h}^\chi$ and $\xi_{s,s+h}^\chi$ have the same law for all $s,t\in [0,T]$ and admissible $h>0$;
	\item (continuous Banach embedding) there exists a separable Banach space $Y\subset H$ such that the embedding is dense and continuous. Moreover, for all $\chi\in Y$, $\PP$-almost surely $\xi_{s,t}^\chi\in Y$ for all $s, t \in [0,T]$.
\end{itemize}
Then the evolution operator
\begin{equation*}
	P_{t}:SC_b(H, \mathcal{T})\to SC_b(H, \mathcal{T}),\quad 
	(P_{t}\phi)(\chi) = \EE [\phi(\xi_{0,t}^{\chi})].
\end{equation*}
satisfies the Chapman-Kolmogorov equation: for all $\chi\in H$,
\begin{equation*}
    P_{t+s}\phi(\chi) = P_tP_s\phi (\chi).
\end{equation*}
\end{lemma}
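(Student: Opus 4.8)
The plan is to reduce the claim to the classical fact that a time-homogeneous flow with independent increments is Markovian; the one genuine difficulty is that the conditioning argument cannot be carried out directly in $H$, whose topology $\mathcal{T}$ is in general neither metrizable nor separable. As a preliminary step I would record the weak Feller property of $P_t$: since $|P_t\phi|\le\|\phi\|_\infty$ and, whenever $\chi_n\to\chi$ in $H$, the continuous dependence on the initial datum yields $\xi_{0,t}^{\chi_n}\to\xi_{0,t}^\chi$ $\PP$-almost surely, the sequential continuity of $\phi$ together with bounded convergence gives $P_t\phi(\chi_n)\to P_t\phi(\chi)$. Thus $P_t$ maps $SC_b(H,\mathcal{T})$ into itself and all the compositions appearing below are well defined.

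The core of the argument is the identity $P_{t+s}\phi(\chi)=P_tP_s\phi(\chi)$ for $\chi\in Y$. Using the cocycle property with intermediate time $t$, one has $\xi_{0,t+s}^\chi=\xi_{t,t+s}^{\xi_{0,t}^\chi}$ $\PP$-almost surely, so
\[
P_{t+s}\phi(\chi)=\EE\big[\phi\big(\xi_{t,t+s}^{\xi_{0,t}^\chi}\big)\big].
\]
The random datum $\xi_{0,t}^\chi$ is $\F_t$-measurable by adaptedness and, since $\chi\in Y$, takes values in the separable Banach space $Y$ $\PP$-almost surely. I would then establish the freezing identity
\[
\EE\big[\phi\big(\xi_{t,t+s}^{\xi_{0,t}^\chi}\big)\,\big|\,\F_t\big]=g\big(\xi_{0,t}^\chi\big),
\qquad g(y):=\EE\big[\phi\big(\xi_{t,t+s}^{y}\big)\big],
\]
and conclude, using time homogeneity in the form $g(y)=\EE[\phi(\xi_{0,s}^y)]=P_s\phi(y)$ and taking expectations, that $P_{t+s}\phi(\chi)=\EE[(P_s\phi)(\xi_{0,t}^\chi)]=P_t(P_s\phi)(\chi)$.

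To prove the freezing identity without a metric on $H$, I would approximate the $Y$-valued random variable $\xi_{0,t}^\chi$ by simple $\F_t$-measurable maps $X_n=\sum_k y_k\one_{A_k}$, with $A_k\in\F_t$ disjoint and $y_k\in Y$, converging to $\xi_{0,t}^\chi$ in $Y$ $\PP$-almost surely (here separability of $Y$ is essential). For each such simple datum one has $\xi_{t,t+s}^{X_n}=\sum_k\one_{A_k}\xi_{t,t+s}^{y_k}$, so that, pulling out the indicators ($A_k\in\F_t$) and dropping the conditioning (each $\xi_{t,t+s}^{y_k}$ is independent of $\F_t$), the identity $\EE[\phi(\xi_{t,t+s}^{X_n})\mid\F_t]=g(X_n)$ holds exactly by linearity of conditional expectation. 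On the right-hand side $g(X_n)\to g(\xi_{0,t}^\chi)$ follows from sequential continuity of $g$ (a Feller-type computation) since $X_n\to\xi_{0,t}^\chi$ in $Y$; identifying the limit of the left-hand side with $\EE[\phi(\xi_{t,t+s}^{\xi_{0,t}^\chi})\mid\F_t]$ requires $\xi_{t,t+s}^{X_n}\to\xi_{t,t+s}^{\xi_{0,t}^\chi}$, whence $\phi(\xi_{t,t+s}^{X_n})\to\phi(\xi_{t,t+s}^{\xi_{0,t}^\chi})$ and conditional bounded convergence applies.

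Finally, to remove the restriction $\chi\in Y$, given $\chi\in H$ I would choose $\chi_n\in Y$ with $\chi_n\to\chi$ (density of $Y$ in $H$); since both $P_{t+s}\phi$ and $P_tP_s\phi$ lie in $SC_b(H,\mathcal{T})$ by the Feller property, letting $n\to\infty$ in $P_{t+s}\phi(\chi_n)=P_tP_s\phi(\chi_n)$ yields the claim for every $\chi\in H$. The step demanding the most care is precisely the freezing identity: the last limit of the previous paragraph involves the \emph{random} $\F_t$-measurable datum $\xi_{0,t}^\chi$, whereas the continuous-dependence hypothesis is only an almost-sure statement along fixed deterministic sequences. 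Reconciling the two—by taking the approximants $X_n$ in a fixed countable dense subset of $Y$ and working off a common exceptional set, or equivalently by extracting joint measurability of $(y,\omega)\mapsto\phi(\xi_{t,t+s}^y(\omega))$ from the sequential continuity of $\phi$ on the separable space $Y$—is where separability of $Y$ does the real work and is the main obstacle to a fully rigorous argument.
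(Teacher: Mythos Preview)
Your proof is essentially identical to the paper's: both use the cocycle property to reduce to a freezing identity, approximate the $Y$-valued $\F_t$-measurable datum by simple functions via separability of $Y$, invoke independence of increments on each piece, pass to the limit via continuous dependence and dominated convergence, and finally extend from $\chi\in Y$ to $\chi\in H$ by density and the Feller property. The subtlety you flag in your last paragraph---that the continuous-dependence hypothesis is stated only for deterministic sequences while one must apply it along random approximants---is handled no more carefully in the paper, which simply invokes ``continuity of $\xi$ with respect to initial conditions'' at the analogous step.
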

	
\begin{proof}
It is enough to prove that 
\begin{equation*}
	\EE\bra{\phi\left(\xi_{t+s}^\chi\right)| \F_t} = P_s\phi\left(\xi_t^\chi\right),
\end{equation*}
as taking the expected value on both sides yields the thesis; in particular, the above equality is equivalent to 
\begin{equation*}
	\EE\bra{\phi\left(\xi_{t+s}^\chi\right)Z} = \EE\bra{P_s\phi\left(\xi_t^\chi\right)Z}
\end{equation*}
for every bounded $\F_t$-measurable random variable $Z$. Since $\xi_{t+s}^\chi = \xi_{t,t+s}^{\xi_{t}^\chi}$ almost surely, it is sufficient to prove 
\begin{equation}\label{eq:step1a}
	\EE\bra{\phi\left(\xi_{t,t+s}^\zeta\right)Z} = \EE\bra{P_s\phi\left(\zeta\right)Z}
\end{equation}
for every $H$-valued $\F_t$-measurable random variable $\zeta$.
		
Let us start by choosing $\zeta$ taking values in $Y$. 
Since $Y$ is a separable metric space, there exists a sequence  of $Y$-valued, $\F_t$-measurable random variables $\{\zeta_n\}_n \overset{Y}{\rightarrow} \zeta$ of the form 
\begin{equation*}
	\zeta_n = \sum_{i=1}^{k_n} \zeta_n^{(i)} \mathbbm{1}_{A_n^{(i)}} ,
\end{equation*}
with $\zeta_n^{(i)}\in Y$ and $\{A_n^{(1)}, \dots, A_n^{(k_n)}\}\subset \F_t$ a partition of $\Omega$. 
Convergence in $Y$ implies convergence in $(H,\mathcal{T})$ and, due to the continuity of $\xi$ with respect to initial conditions and to the sequential continuity of $\phi$ and $P_s\phi$, it is enough to prove 
\begin{equation}\label{eq:step1b}
	\EE\bra{\phi\left(\xi_{t,t+s}^{\zeta_n}\right)Z} = \EE\bra{P_s\phi\left(\zeta_n\right)Z} \quad \forall n .
\end{equation}
\cref{eq:step1a} then follows by dominated convergence (whenever $\zeta(\omega)\in Y$ $\PP$-almost surely).
		
Given that 
\begin{equation*}
	P_s\phi \left(\zeta_n\right) = \sum_{i=1}^{k_n} P_s\phi\left(\zeta_n^{(i)}\right) \mathbbm{1}_{A_n^{(i)}} \quad \PP-a.s.
\end{equation*}
and that 
\begin{equation*}
	\phi \left(\xi_{t,t+s}^{\zeta_n}\right) = \sum_{i=1}^{k_n} \phi\left(\xi_{t,t+s}^{\zeta_n^{(i)}}\right) \mathbbm{1}_{A_n^{(i)}}
\end{equation*}
(because $\xi_{t,t+s}^{\zeta_n} = \sum_{i=1}^{k_n} \xi_{t,t+s}^{\zeta_n^{(i)}} \mathbbm{1}_{A_n^{(i)}}$), we can reduce to proving that 
\begin{equation*}
	\EE \bra{ \phi\left(\xi_{t,t+s}^{\eta}\right) \mathbbm{1}_{A} Z } 
	= 
	\EE	\bra{ P_s\phi\left(\eta\right) \mathbbm{1}_{A} Z } 
\end{equation*}
for all $Y$-valued, $\F_t$-measurable random variables $\eta$  and for all $A\in\F_t$. Moreover, we can neglect the term $\mathbbm{1}_{A}$, as it is itself a bounded $\F_t$-measurable random variable.
In sight of this, we have
\begin{multline*}
	\EE \bra{ \phi\left(\xi_{t,t+s}^{\eta}\right) Z }
	=\EE \bra{ \EE \bra{ \phi\left(\xi_{t,t+s}^{\eta}\right) Z | \F_t } }
	=\EE \bra{ \EE \bra{ \phi\left(\xi_{t,t+s}^{\eta}\right) } Z } \\
	=\EE \bra{ \phi\left(\xi_{t,t+s}^{\eta}\right) } \EE \bra{ Z } 
	=\EE \bra{ \phi\left(\xi_{s}^{\eta}\right) x } \EE \bra{ Z }
	=P_s\phi\left(\eta\right) \EE \bra{ Z } 
	=\EE	\bra{ P_s\phi\left(\eta\right) Z },
\end{multline*}
which proves \eqref{eq:step1b}.
		
We are thus left to verify that \eqref{eq:step1a} holds for every $H$-valued, $\F_t$-measurable random variable we exploit the dense embedding of $Y$ into $H$. Let $\chi(\omega)\in H$ $\PP$-almost surely and $\{\chi_n\}_{n\in\N}$ be a sequence of $Y$-valued random variables converging to $\chi$ with respect to $\mathcal{T}$. We have already proved that for all $n$ it holds
\begin{equation*}
	\EE\bra{\phi\left(\xi_{t+s}^{\chi_n}\right)Z} = \EE\bra{P_s\phi\left(\xi_t^{\chi_n}\right)Z}
\end{equation*}
whenever $\phi\in SC_b(H, \mathcal{T})$ and $Z$ is a bounded, $\F_t$-measurable random variable. By continuity of $\xi$ with respect to $\chi_n$ we have that both $\phi\left(\xi_{t+s}^{\chi_n}\right) \rightarrow \phi\left(\xi_{t+s}^{\chi}\right)$ and $P_s\phi\left(\xi_t^{\chi_n}\right) \rightarrow P_s\phi\left(\xi_t^{\chi}\right)$, hence by dominated convergence their expected values converge as well, yielding \eqref{eq:step1a}.
\end{proof}
	
\begin{corollary}
Let $P_t$ be the solution semigroup for the problem \eqref{eq:qgsto}, as defined in \eqref{eq:markov_semigroup}. Then $P_t$ enjoys the Markov property, i.e. for any $s,t\geq 0$, $\phi \in SC_b(\L^\infty, \tau_\star)$ 
\begin{equation*}
	P_{t+s} \phi = P_t P_s \phi .
\end{equation*}   
\end{corollary}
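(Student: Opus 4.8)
The plan is to obtain the statement as a direct application of \cref{lemma:markov}, with the dictionary $H = \L^\infty$ endowed with the weak$\star$ topology $\tau_\star$, stochastic flow $\xi_{s,t}^\chi := q(s,t,\chi)$ the solution of \eqref{eq:qgsto} started at time $s$ from datum $\chi$, and embedded separable Banach space $Y = \W^{1,4}$. Since $(\L^\infty, \tau_\star)$ is a topological vector space, $P_t$ coincides with the evolution operator $(P_t\phi)(\chi) = \EE[\phi(\xi_{0,t}^\chi)]$ appearing in the lemma, and the weak Feller property of \cref{cor: feller} guarantees that the composition $P_t P_s \phi$ is well defined on $SC_b(\L^\infty, \tau_\star)$; hence, once all hypotheses are verified, the Chapman--Kolmogorov identity $P_{t+s}\phi = P_t P_s \phi$ follows at once.

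First I would check the three structural flow properties. The identity $\xi_{s,s}^\chi = \chi$ is immediate from \cref{def:solutionqgsto}. The cocycle relation $\xi_{s,t}^\chi = \xi_{r,t}^{\xi_{s,r}^\chi}$ is a consequence of the pathwise uniqueness of \cref{prop:uniqueness}: both sides solve \eqref{eq:qgsto} on $[r,t]$ with the same datum $\xi_{s,r}^\chi$ at time $r$ and therefore coincide $\PP$-almost surely. Continuity with respect to the initial datum in $(\L^\infty, \tau_\star)$ is precisely the content of \cref{prop: cont dep from data}, which ensures that $\chi_n \weakstarto \chi$ implies $\xi_{s,t}^{\chi_n} \weakstarto \xi_{s,t}^\chi$ almost surely.

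Next I would verify the probabilistic hypotheses. Adaptedness of $\omega \mapsto \xi_{0,t}^\chi$ with respect to $\F_t$, in the $\tau_\star$-Borel sense, is part of \cref{thm:main}. Independence of increments follows from the additive structure of the noise: taking the pathwise viewpoint through $\eta = q - W$, the solution $\xi_{s,t}^\chi$ issued from the deterministic datum $\chi$ at time $s$ is a measurable functional of the increments $\{W_r - W_s\}_{r\in[s,t]}$, which are independent of $\F_s$. Time homogeneity is likewise a consequence of the equation being autonomous together with the stationarity of the increments of $W$, so that $\xi_{t,t+h}^\chi$ and $\xi_{s,s+h}^\chi$ are images of increment processes with identical law under the same solution map and hence share their own law.

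Finally, the continuous Banach embedding hypothesis asks that $Y = \W^{1,4}$ be separable and densely, continuously embedded into $(\L^\infty, \tau_\star)$, with $\xi_{s,t}^\chi \in \W^{1,4}$ almost surely whenever $\chi \in \W^{1,4}$. Separability of $\W^{1,4}$ and continuity of the embedding into $\L^\infty$, and a fortiori into the coarser $\tau_\star$, are standard, while weak$\star$ density is inherited from the weak$\star$ density of smooth functions in $\L^\infty$; the invariance of $\W^{1,4}$ under the flow is exactly \cref{prop: sol in w14}. The main obstacle is not any single calculation but rather ensuring that every regularity statement invoked is phrased in precisely the topology demanded by \cref{lemma:markov}---in particular that continuity in the initial datum and the invariant subspace $Y$ are compatible with the non-metrizable $\tau_\star$. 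This matching is exactly what the choice $Y = \W^{1,4}$ accomplishes: its weak$\star$-dense, separable embedding bridges the separable-Banach framework of the lemma and the $\L^\infty$ setting of the problem, and with it all hypotheses are met, yielding the claim.
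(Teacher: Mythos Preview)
Your proposal is correct and follows exactly the paper's approach: apply \cref{lemma:markov} with $H=(\L^\infty,\tau_\star)$, $\xi_{s,t}^\chi=q(s,t,\chi)$, and $Y=\W^{1,4}$, invoking \cref{thm:main}, \cref{prop:uniqueness}, \cref{prop: cont dep from data}, and \cref{prop: sol in w14} to verify the hypotheses. The paper's own proof is a two-sentence sketch of precisely this verification, so your more detailed checklist is a faithful expansion of it.
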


\begin{proof}
It is easy to verify that $P_t$ meets the conditions of \cref{lemma:markov}. In particular, properties of the stochastic flow can be derived from \cref{thm:main} and from the properties of the Wiener process we chose as stochastic forcing. As for the continuous Banach embedding, we can use the dense embedding of the Banach space $\W^{1,4}$ into $(\L^\infty, \tau_\star)$.
\end{proof}

 
\subsection{Construction of the invariant measure}
	
We are finally able to show the existence of an invariant measure for our problem. 
	
\begin{proof}[Proof of \cref{thm:invmeasure}]
Let $q$ be the pseudo-vorticity solution to \eqref{eq:qgsto} as in \cref{thm:main}, started at $t_0=0$, and let
\begin{equation*}
	m_t\coloneqq \operatorname{\mathcal{L}}(q_t)
\end{equation*}
be its law on $\B(\tau^b_{\star})$. Consider the averaged measures
\begin{equation*}
	\mu_n\coloneqq\frac{1}{n}\int_0^n m_tdt.
\end{equation*}
This integration is allowed since we have proved joint measurability of the process $q_t$, with respect to the corresponding topologies, in \cref{thm:main}. The family $\{\mu_n\}_{n\in \N}$ is tight (see next section for the proof). 
On a separable metric space one would thus use Prokhorov's theorem to extract a subsequence converging in the sense of measures. 
However, the space $(\L^\infty, \B(\tau_\star^b))$ being non metrizable, we have to resort to the variant given in \cite[Theorem 3]{jakubowski98}. 
What we have to verify, in order to apply such variant, is the existence of a countable family of functionals $\{f_i\}_{i\in\N}$ on $\L^\infty$ taking values in $[-1,1]$, $\tau^b_{\star}$-continuous and point-separating in $\L^\infty$. 
The condition is satisfied: since all elements of the separable space $\L^1$ are $\tau^b_{\star}$-continuous (they are $\tau_\star$-continuous), any countable dense subset of $\L^1$ has the required property.
Hence, there exists a subsequence $\mu_{n_k}$ converging to some probability measure $\mu$ in the following sense:
\begin{equation*}
	\int f d\mu_{n_k}\rightarrow \int f d\mu \qquad \forall f\in C_b((\L^\infty, \tau^b_{\star}); \R) .
\end{equation*}
		
It is then immediate to conclude that $\mu$ is an invariant measure, since for every $\phi \in C_b((\L^\infty, \tau^b_{\star}); \R)$
\begin{equation*}
	\brak{P_t\phi, \mu_{n_k}} 
	= 
	\frac{1}{n_k} \int_{n_k}^{t+n_k} \brak{m_r, \phi} d r + \brak{\phi, \mu} - \frac{1}{n_k} \int_0^t \brak{m_r, \phi} d r 
\end{equation*}
and the right-hand side converges to $\brak{\phi, \mu}$. 
By \cref{prop: cont dep from data} $P_t\phi\in C_b((\L^\infty, \tau^b_{\star}); \R)$, thus it also holds that $\brak{P_t\phi, \mu_{n_k}}\rightarrow\brak{P_t\phi, \mu}$ and by uniqueness of the limit 
\begin{equation*}
	\brak{P_t\phi, \mu} = \brak{\phi, \mu} .\qedhere
\end{equation*}
\end{proof}
	

\subsection{Tightness of averaged measures}
	
\begin{proposition}\label{prop:flandolitrick}
Let $q(t_0, t, \chi)$ be solution of \eqref{eq:qgsto}. There exists a real-valued, $\PP$-almost surely finite random variable $r$ such that 
\begin{equation*}
	\sup_{t_0 \leq 0} \norm{q(t_0, 0, q_{t_0}=0)}{\L^\infty} \leq 
	r 
    \qquad \PP\text{-a.s}.
\end{equation*}
\end{proposition}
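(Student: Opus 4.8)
The plan is to establish the bound by a pathwise dissipativity argument carried out on the pulled-back dynamics, using the friction $\gamma$ to damp the contribution of the infinitely long past, in the spirit of Flandoli's trick. First I would replace the Wiener process by the associated two-sided \emph{stationary} Ornstein--Uhlenbeck process $z_t = \int_{-\infty}^t e^{-\gamma(t-s)}\,dW_s$, which solves $dz_t = -\gamma z_t\,dt + dW_t$, takes values in $\H^{5/2}$, is stationary in $t$ and has all moments finite. Setting $\eta = q - z$, equation \eqref{eq:qgsto} turns into the purely pathwise PDE $\partial_t \eta + (u\cdot\grad)(\eta + z) + \gamma\eta = 0$, with $u = \op(\eta + z)$ the full velocity and initial datum $\eta_{t_0} = -z_{t_0}$. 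Since $\H^{5/2}\hookrightarrow \L^\infty$ and $z$ is stationary, $\norm{z_0}{\L^\infty}$ is an a.s.\ finite random variable independent of $t_0$, so it suffices to bound $\sup_{t_0\le 0}\norm{\eta(t_0,0,-z_{t_0})}{\L^\infty}$. The decisive gain of this substitution is that the forcing $-(u\cdot\grad)z$ now carries the stationary, sub-exponentially growing factor $z$ instead of the $\sqrt{\abs{t_0}}$-growing $W$; by stationarity and a Borel--Cantelli/Fernique argument, all relevant norms of $z$ grow at most like $\sqrt{\log\abs{s}}$ a.s., so that integrals of the form $\int_{-\infty}^0 e^{\gamma s}(\text{polynomial in }\norm{z_s}{\H^{5/2}})\,ds$ converge $\PP$-a.s.

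The base of the estimate is a bound at the level of $\norm{q}{\H^{-1}}\sim \norm{u}{\L^2}$. Pairing the equation with the stream function $\psi = (A+L)^{-1}q$, the transport term cancels layer by layer, $\langle (u\cdot\grad)q,\psi\rangle = -\sum_i \langle q^i, \grad^\perp\psi^i\cdot\grad\psi^i\rangle = 0$, so $E_t := -\langle q_t,(A+L)^{-1}q_t\rangle = \norm{D^{1/2}\grad\psi_t}{\L^2}^2 - \langle L\psi_t,\psi_t\rangle \ge 0$ obeys the It\^o balance $dE_t = (-2\gamma E_t + \sigma^2)\,dt + dM_t$, where $\sigma^2 = -\sum_k c_k^2\langle (A+L)^{-1}\rho_k,\rho_k\rangle < \infty$ and $M$ is a martingale with $d\langle M\rangle_t \lesssim E_t\,dt$. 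Solving with $E_{t_0}=0$ gives $E_0 = \int_{t_0}^0 e^{2\gamma s}\sigma^2\,ds + \int_{t_0}^0 e^{2\gamma s}\,dM_s$; the drift converges to $\sigma^2/2\gamma$, while the exponentially weighted martingale is bounded in $L^2$ uniformly in $t_0$ thanks to $\EE E_s \le \sigma^2/2\gamma$, hence converges a.s.\ and has a.s.\ finite supremum over $t_0$ by Doob and Burkholder--Davis--Gundy. This yields an a.s.\ finite $R_E$ with $\sup_{t_0\le 0}\norm{u(t_0,0,0)}{\L^2}\le R_E$, and therefore $\sup_{t_0\le 0}\norm{\eta_0}{\H^{-1}}\le R_E + \sup_{t\le 0}\norm{z_t}{\H^{-1}} < \infty$ a.s.

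With the velocity controlled in $\L^2$, I would then climb the scale of $\L^{2k}$ norms of $\eta$. Testing against $\eta\abs{\eta}^{2k-2}$, the transport of $\eta$ drops by incompressibility (as in \cref{lemma:ViscSolBound}) and one is left with $\frac{d}{dt}\norm{\eta}{\L^{2k}} \le -\gamma\norm{\eta}{\L^{2k}} + C\norm{u}{\L^{2k}}\norm{\grad z}{\L^\infty}$, with no other forcing. Splitting $u = \op\eta + \op z$, the $\op z$ part is pure ($z$-dependent) forcing, while the key point is to control the $\op\eta$ part by a \emph{strictly lower} norm of $\eta$ via \cref{elliptic estimate}: at the enstrophy level $\norm{\op\eta}{\L^2}\le C\norm{\eta}{\H^{-1}}\le CR_E$; at the $\L^4$ level $\norm{\op\eta}{\L^4}\le C\norm{\eta}{\L^{4/3}}\le C\norm{\eta}{\L^2}$; and, crucially uniformly in $k$, $\norm{\op\eta}{\L^{2k}}\le C\norm{\op\eta}{\L^\infty}\le C\norm{\eta}{\L^4}$ for all $k\ge 2$. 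Consequently the coefficient of $\norm{\eta}{\L^{2k}}$ is exactly $-\gamma$, everything else being a forcing term depending only on $z$ and on the bounds from the previous steps. Gr\"onwall over $[t_0,0]$ then gives $\norm{\eta_0}{\L^{2k}}\le e^{\gamma t_0}\norm{z_{t_0}}{\L^{2k}} + \int_{t_0}^0 e^{\gamma s}F_k(s)\,ds$ with $F_k$ sub-exponential in $\abs{s}$; the first term vanishes and the integral converges as $t_0\to-\infty$, uniformly in $t_0$ and --- because of the $k$-independent bound $\norm{\op\eta}{\L^{2k}}\le C\norm{\eta}{\L^4}$ --- uniformly in $k$. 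Letting $k\to\infty$ produces the desired a.s.\ finite $r$, and the claim follows for $q_0 = \eta_0 + z_0$.

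The main obstacle is precisely the interplay of these two uniformities. A naive Gr\"onwall keeping $\norm{u}{\L^{2k}}\le C\norm{\eta}{\L^{2k}}$ would leave the multiplier $-\gamma + C\norm{\grad z}{\L^\infty}$ in front of $\norm{\eta}{\L^{2k}}$; since $\norm{\grad z}{\L^\infty}$ is merely stationary (not small relative to $\gamma$), the pulled-back integral $\int_{t_0}^0(-\gamma + C\norm{\grad z_s}{\L^\infty})\,ds$ would diverge to $+\infty$, and no uniform-in-$t_0$ bound could survive. The entire scheme hinges on demoting this term to a genuine degree-zero forcing, which is possible only because the base energy estimate supplies $\H^{-1}$ control and \cref{elliptic estimate} gains a full derivative. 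The second delicate point is keeping all constants independent of $k$ so that the $\L^{2k}$ bounds survive the limit $k\to\infty$; this is the pull-back analogue of the uniform-in-$k$ argument of \cref{lemma:ViscSolBound}, and it is what forces the intermediate $\L^2$ and $\L^4$ steps to be carried out separately before the general $\L^{2k}$ estimate.
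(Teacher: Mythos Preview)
There is a genuine gap at the $\H^{-1}$ energy step. Your It\^o balance $dE_t=(-2\gamma E_t+\sigma^2)\,dt+dM_t$ is correct, and so is the representation
\[
E_0^{(t_0)}=\int_{t_0}^0 e^{2\gamma s}\sigma^2\,ds-2\int_{t_0}^0 e^{2\gamma s}\brak{\psi_s^{(t_0)},dW_s},
\]
but the integrand $\psi_s^{(t_0)}$ is the stream function of the solution \emph{started at time $t_0$}: it depends on $t_0$. Hence $t_0\mapsto \int_{t_0}^0 e^{2\gamma s}\,dM_s^{(t_0)}$ is not a (backward) martingale in $t_0$, and Doob/BDG do not apply to it. A uniform-in-$t_0$ bound in $L^2$ alone cannot produce an a.s.\ finite supremum over $t_0\in(-\infty,0]$; think of the stationary family $t_0\mapsto B_{t_0+1}-B_{t_0}$ for two-sided Brownian motion, which is $L^2$-bounded and a.s.\ continuous yet has a.s.\ infinite supremum. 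Worse, your subsequent $\L^2$ Gr\"onwall does not use $E_0^{(t_0)}$ but $E_s^{(t_0)}$ for all $s\in[t_0,0]$, so you would actually need $\sup_{t_0\le 0}\sup_{t_0\le s\le 0}E_s^{(t_0)}<\infty$ a.s., which compounds the difficulty. If instead you try the pathwise $\H^{-1}$ estimate on $\eta=q-z$, the transport cancellation is lost (because $\phi=(A+L)^{-1}\eta\neq\psi$), cross terms of the form $\brak{\Delta\phi,\nabla^\perp\phi_z\cdot\nabla\phi}$ appear, and you are back to a multiplier $-\gamma+C\norm{z}{\H^{5/2}}$, precisely the obstacle you set out to avoid.

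The paper's proof sidesteps this entirely by introducing the Ornstein--Uhlenbeck process $\zeta_\lambda(t)=\int_{-\infty}^t e^{-\lambda(t-s)}\,dW_s$ with a \emph{free} parameter $\lambda$, and writing the $\L^p$ differential inequality directly on $\theta_\lambda=q-\zeta_\lambda$ (no base $\H^{-1}$ step). It then \emph{accepts} the multiplier $-\gamma+C\norm{\nabla\zeta_\lambda}{\L^\infty}$: the point is that, after the substitution, the Gr\"onwall exponent $\int_{t_0}^0(C\norm{\zeta_\lambda(s)}{\H^\alpha}-\gamma)\,ds$ is a pathwise functional of the single, $t_0$-independent, stationary process $\zeta_\lambda$. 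Since $\EE\norm{\zeta_\lambda}{\H^\alpha}^2=O(1/\lambda)$, one chooses $\lambda$ large enough that $\EE\norm{\zeta_\lambda(0)}{\H^\alpha}<\gamma/(2C)$, and then ergodicity of $\zeta_\lambda$ yields $\frac{1}{|t_0|}\int_{t_0}^0\norm{\zeta_\lambda(s)}{\H^\alpha}\,ds<\gamma/(2C)$ for all $|t_0|$ large, $\PP$-a.s., which makes the exponential decay uniformly in $t_0$. The tunable $\lambda$ together with ergodicity is the missing ingredient in your scheme.
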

	
\begin{proof}
Consider, for $\lambda >0$, the stochastic convolution 
\begin{equation*}
	\zlam(t) \coloneqq \int_{-\infty}^t e^{-\lambda(t-s)} d W_s ,
\end{equation*}
where $\zlam$ has one component per layer.
It is known that $\zlam$ is the unique solution of the linear equation $d\zlam(t)=-\lambda\zlam(t) + dW(t)$ and has the same regularity as $W_t$. Set $\theta_\lambda(t) \coloneqq q(t_0, t, 0) - \zlam(t)$. $\theta_\lambda$ solves the deterministic equation 
\begin{equation*}
	\frac{d}{dt}\theta_\lambda= - \BS(\tlam + \zlam) \cdot \grad \tlam - \BS(\tlam + \zlam) \cdot \grad \zlam - \gamma \tlam + (\lambda - \gamma) \zlam
\end{equation*}
where $\BS$ formally stands for the operator $\grad^\perp(A+L)^{-1}$, in analogy with the Biot-Savart kernel usually found in the formulation of the standard Euler equation. 
		
Proceeding as in \cref{sec:wellposedness}, i.e. by employing a vanishing viscosity approximation (cf. also \cite{Bessaih2020}) and with computations analogous to those used to prove \cref{lemma:ViscSolBound}, we can test $\tlam$ against $|\tlam|^{p-2}\tlam$ and obtain 
\begin{equation*}
	\frac{d}{dt} \norm{\tlam}{\L^p} 
	\leq 
	C  \norm{\grad \zlam}{\L^\infty} \norm{\zlam}{\L^p}
	+ |\lambda - \gamma| \norm{\zlam}{\L^p}
	+ \left( C \norm{\grad \zlam}{\L^\infty} - \gamma \right) \norm{\tlam}{\L^p},
\end{equation*}
which yields, by Gr\"onwall inequality on $[t_0, 0]$, 
\begin{multline*}
	\norm{\tlam(0)}{\L^p} 
	\leq 
	\norm{\tlam(t_0)}{\L^p} e^{\int_{t_0}^0 (C \norm{\grad \zlam(s)}{\L^\infty} - \gamma) d s} \\ 
	+ \int_{t_0}^0 \left( C \norm{\grad \zlam(r)}{\L^\infty} + |\lambda - \gamma| \right) \norm{\zlam(s)}{\L^p} e^{\int_{s}^0 (C \norm{\grad \zlam(r)}{\L^\infty} - \gamma) d r} d s .
\end{multline*}
Taking the limit $p\rightarrow\infty$ we obtain, since $H^{\alpha-1} \subset L^{\infty}$ for $\alpha > 2$ and $\tlam(t_0) = \zlam(t_0)$, 
\begin{multline}\label{eq: flandoli trick}
	\norm{\tlam(0)}{\L^\infty} 
	\leq \norm{\zlam(t_0)}{\H^\alpha} e^{\int_{t_0}^0 (C \norm{\zlam(s)}{\H^\alpha} - \gamma) d s} \\ 
	+ \int_{t_0}^0 C \left( \norm{\zlam(s)}{\H^\alpha} + |\lambda - \gamma| \right) \norm{\zlam(s)}{\H^\alpha} e^{\int_{s}^0 (C \norm{\zlam(r)}{\H^\alpha} - \gamma) d r} d s ,
\end{multline}
where we allowed the constant $C$ to change due to Sobolev embedding.
		
A uniform bound on the right-hand side of \eqref{eq: flandoli trick} can now be obtained working term-by-term as follows. 
First, notice that 
\begin{equation*}
	\EE\bra{\norm{\zlam(t)}{\H^\alpha}^2} = \frac{1}{2\lambda} \EE\bra{\norm{W(1)}{\H^\alpha}^2} \qquad \forall t\in \R .
\end{equation*}
By ergodicity of $\zlam$ (cf. \cite{DaPrato2014}),
\begin{equation*}
	\lim_{t_0\rightarrow -\infty} \frac{1}{-t_0} \int_{t_0}^0 \norm{\zlam(s)}{\H^\alpha} d s = \EE \bra{ \norm{\zlam(0)}{\H^\alpha} } \qquad \PP\text{-a.s.}
\end{equation*}
and we can choose $\lambda$ in such a way that 
\begin{equation*}
	\lim_{t_0\rightarrow -\infty} \frac{1}{-t_0} \int_{t_0}^0 \norm{\zlam(s)}{\H^\alpha} d s < \frac{\gamma}{2C} \qquad \PP\text{-a.s.},
\end{equation*}
$C$ coming from \eqref{eq: flandoli trick}.
This means that there exists a random time $\tau(\omega) \leq 0$ $\PP$-almost surely such that, for every $t_0 < \tau(\omega)$, 
\begin{equation*}
	\int_{t_0}^0 C \norm{\zlam(s)}{\H^\alpha} d s \leq \frac12\gamma (-t_0) \leq 0.
\end{equation*}
Hence, for $- t_0$ and $\lambda$ large enough, $\exp\left(\int_{t_0}^0 (C\|\zlam(s)\|_{H^\alpha} - \gamma) ds\right) \leq \exp\left( \frac{\gamma}{2} t_0 \right)$. 
On the other hand, by continuity of $\zlam(t)$ in $\H^{\alpha}$ there exists a $\PP$-almost surely finite random variable $r_1$ such that
\begin{equation*}
	\sup_{\tau(\omega)\le t_0 \le 0}\int_{t_0}^0\|\zlam(s, \omega)\|_{\H^{\alpha}}ds \le r_1(\omega) \qquad \PP\text{-a.e. } \omega \in \Omega ,
\end{equation*}
and this yields a uniform bound for the exponential term, which moreover vanishes as $t_0\rightarrow-\infty$. 
In a similar way one can also obtain, for $t<0$, a uniform bound on $\norm{\zlam}{\H^\alpha}$ of the form $\norm{\zlam(t,\omega)}{\H^\alpha} \leq r_2(1 + |t|)$ for some $\PP$-almost surely finite random variable $r_2$.
We have thus proved that
\begin{equation*}
	\sup_{t_0 \leq 0} \norm{\theta_\lambda\left(0, \tlam(t_0)=-\zlam(t_0)\right)}{\L^\infty} 
	\leq 
	r_3(\omega)
\end{equation*}
for some $\PP$-almost surely finite random variable $r_3$. The thesis now follows recalling that $q = \tlam + \zlam$.
\end{proof}
	
Tightness of averaged measures is a direct corollary of the above proposition.
	
\begin{corollary}
For every $\eps>0$ there exist a real number $R_\eps>0$ such that 
\begin{equation*}
	\inf_{t\ge 0} \PP \left( \norm{q(0, t, q_0=0)}{\L^\infty} < R_\eps \right) > 1 - \eps .
\end{equation*}
In particular, the family of measures $\{\mu_n\}_{n\in\N}$ is tight in $\left(\L^{\infty}, \tau^b_{\star}\right).$
\end{corollary}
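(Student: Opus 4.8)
The plan is to deduce both assertions from \cref{prop:flandolitrick} by exploiting the time-homogeneity of the problem, and then to repackage the resulting uniform bound as a compactness statement through the Banach--Alaoglu theorem.

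First I would transfer the pullback estimate of \cref{prop:flandolitrick}, which controls solutions started in the distant past and observed at the present time $0$, into a forward-in-time bound. Since the driving noise $W$ has stationary increments on $\R$ and the coefficients of \eqref{eq:qgsto} are autonomous, the associated flow is time-homogeneous (this is precisely the property checked in the Markov argument of \cref{ssec:markov}): for every $t\ge 0$, taking start times $-t$ and $0$ with common increment $t$, one obtains the equality in law
\begin{equation*}
	q(0, t, 0) \overset{d}{=} q(-t, 0, 0).
\end{equation*}
Because $-t\le 0$, \cref{prop:flandolitrick} gives $\norm{q(-t,0,0)}{\L^\infty}\le \sup_{t_0\le 0}\norm{q(t_0,0,0)}{\L^\infty}\le r$ $\PP$-a.s., where $r$ is an a.s. finite random variable independent of $t$. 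Hence on the event $\set{r<R}$ we have $\norm{q(-t,0,0)}{\L^\infty}<R$, and therefore
\begin{equation*}
	\PP\big(\norm{q(0,t,0)}{\L^\infty}<R\big)=\PP\big(\norm{q(-t,0,0)}{\L^\infty}<R\big)\ge \PP(r<R)
\end{equation*}
uniformly in $t\ge 0$.

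Since $r<\infty$ $\PP$-a.s., the events $\set{r<R}$ increase to a set of full measure as $R\to\infty$, so I may fix $R_\eps$ with $\PP(r<R_\eps)>1-\eps$. The previous display then bounds $\PP(\norm{q(0,t,0)}{\L^\infty}<R_\eps)$ below by the fixed number $\PP(r<R_\eps)$ for every $t\ge 0$, giving $\inf_{t\ge 0}\PP(\norm{q(0,t,0)}{\L^\infty}<R_\eps)\ge \PP(r<R_\eps)>1-\eps$ and proving the first assertion. For tightness I would use the closed balls $K_R:=\set{\phi\in\L^\infty:\norm{\phi}{\L^\infty}\le R}$ as exhausting compacts. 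By Banach--Alaoglu each $K_R$ is $\tau_\star$-compact, and since $\tau^b_\star$ coincides with $\tau_\star$ on norm-bounded sets (cf. \cref{sec:preliminaries}), $K_R$ is $\tau^b_\star$-compact as well; it is moreover $\tau^b_\star$-closed, hence Borel, because the $\L^\infty$-norm is $\tau_\star$-lower semicontinuous, being a supremum of the $\tau_\star$-continuous maps $\phi\mapsto\brak{\phi,\psi}$ over the unit ball of $\L^1$. Recalling $m_t=\mathcal{L}(q(0,t,0))$ and that the open ball is contained in $K_{R_\eps}$, the first part yields $m_t(K_{R_\eps})\ge \PP(\norm{q(0,t,0)}{\L^\infty}<R_\eps)\ge \PP(r<R_\eps)$ for all $t\ge 0$; averaging (the integrand being measurable by the joint measurability established in \cref{thm:main}) gives
\begin{equation*}
	\mu_n(K_{R_\eps})=\frac1n\int_0^n m_t(K_{R_\eps})\,dt\ge \PP(r<R_\eps)>1-\eps \qquad\text{for every } n,
\end{equation*}
which is exactly tightness of $\set{\mu_n}_{n\in\N}$ in $(\L^\infty,\tau^b_\star)$.

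The only genuinely non-routine point is the first step: converting the a.s. pathwise pullback bound of \cref{prop:flandolitrick} into a uniform-in-$t$ probabilistic bound forward in time, which hinges on the stationarity of the noise and the autonomy of the equation. The remaining ingredients — the choice of $R_\eps$ from a.s. finiteness of $r$, the $\tau^b_\star$-compactness and Borel measurability of norm balls in the non-metrizable setting, and the averaging step — are standard once the preliminaries of \cref{sec:preliminaries} are invoked.
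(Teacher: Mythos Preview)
Your proposal is correct and follows essentially the same approach as the paper: use time-homogeneity to identify the law of $q(0,t,0)$ with that of $q(-t,0,0)$, then invoke the a.s.\ bound $r$ from \cref{prop:flandolitrick} and choose $R_\eps$ with $\PP(r<R_\eps)>1-\eps$. Your write-up is in fact more complete than the paper's, which leaves the passage from the uniform probability bound to tightness of $\{\mu_n\}$ implicit, whereas you spell out the Banach--Alaoglu argument for $\tau^b_\star$-compactness of norm balls and the averaging step.
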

	
\begin{proof}
By time homogeneity $q(t_0, 0, q_{t_0}=0) \sim q(0, -t_0, q_0 = 0)$ $\forall t_0<0$. Since for all random variables $r$ $\PP$-almost surely finite and $\epsilon>0$ there exists $R_\epsilon>0$ such that $\PP(r < R_\epsilon) > 1-\epsilon$,
by \cref{prop:flandolitrick} we have the following uniform estimate:
\begin{multline*}
	\PP \left( \norm{q(0, t, q_0=0)}{\L^\infty} < R_\eps \right) 
	= \PP \left( \norm{q(-t, 0, q_{-t}=0)}{\L^\infty} < R_\eps \right)\\
	= \PP(r < R_\epsilon) > 1 - \eps .\qedhere
\end{multline*}
\end{proof}
	
\begin{acknowledgements}
F. G. and L. R. were both supported by the project \emph{Mathematical methods for climate science}, funded by he Ministry of University and Research (MUR) as part of the PON 2014-2020 "Research and Innovation" resources - Green Action - DM MUR 1061/2022
\end{acknowledgements}
	
\bibliography{main}{}
\bibliographystyle{plain}
	
\end{document}